\newtheorem*{thma}{Theorem A}
\newtheorem*{thmb}{Theorem B}
\newtheorem*{thmc}{Theorem C}
\newtheorem*{ques}{Question}
\newtheorem*{conv}{Convention}
\newtheorem{theorem}{Theorem}[section]
\newtheorem{lemma}[theorem]{Lemma}
\newtheorem{proposition}[theorem]{Proposition}
\newtheorem{corollary}[theorem]{Corollary}
\newtheorem{remark}[theorem]{Remark}
\newtheorem{example}[theorem]{Example}
\newtheorem{notation}[theorem]{Notation}
\newcommand{\mF}{\mathcal {F}}
\newcommand{\Z}{\mathbb Z}
\newcommand{\F}{\mathbb F}
\newcommand{\G}{\Gamma}
\newcommand{\orb}{\mathcal{O}_{\mF}G}
\newcommand{\orbmod}{\mbox{Mod-}\mathcal{O}_{\mF}G}
\newcommand{\nathom}{\mathrm{Hom}_{\mF}}
\title[Geometric dimension of groups]{Geometric dimension of groups for the family of virtually cyclic subgroups}
\author{Dieter Degrijse}
\address{Department of Mathematics, KU Leuven, Kortrijk, Belgium}%
\email{Dieter.Degrijse@kuleuven-kulak.be}%
\author{Nansen Petrosyan}
\address{Department of Mathematics, KU Leuven, Kortrijk, Belgium}%
\email{Nansen.Petrosyan@kuleuven-kulak.be}%
\thanks{Both authors were supported by the Research Fund KU Leuven.}
\thanks{The second author was also supported by the FWO-Flanders Research Fellowship.}
\begin{document}
\maketitle
\begin{abstract} By studying commensurators of virtually cyclic groups, we prove that every elementary amenable group of finite Hirsch length $h$ and cardinality $\aleph_n$ admits a finite dimensional classifying space with virtually cyclic stabilizers of dimension $n+h+2$. We also provide a criterion for groups that fit into an extension with torsion-free quotient to admit a finite dimensional classifying space with virtually cyclic stabilizers. Finally, we exhibit examples of integral linear groups of type $F$  whose  geometric dimension for the family of virtually cyclic subgroups is finite but arbitrarily larger than the geometric dimension for proper actions. This answers a question posed by W.~L\"{u}ck. 
\end{abstract}
\tableofcontents
\section{Introduction}
A classifying space of a discrete group $G$ for a family of subgroups $\mathcal{F}$ is a terminal object in the homotopy category of $G$-CW complexes with stabilizers in $\mathcal{F}$ (see \cite{tom}). Such a space is also called a model for $E_{\mathcal{F}}G$.  A model for $E_{\mathcal{F}}G$  always exists for any given discrete group $G$ and a family of subgroups $\mathcal{F}$ but it need not be of finite type or finite dimensional.  
When $\mathcal{F}$ is the family of virtually cyclic subgroups, $E_{\mathcal{F}}G$ is denoted  by $\underline{\underline{E}}G$. Questions concerning finiteness properties of $\underline{\underline{E}}G$ have been especially motivated by the Farrell-Jones Isomorphism Conjecture which describes the structure of the algebraic  K- and L-theory of the group ring of $G$  by a certain equivariant homology theory of $\underline{\underline{E}}G$  (e.g.~see \cite{BartelsLuckReich},\cite{DL} and \cite{FJ}).

Finite dimensional models for $\underline{\underline{E}}G$ have been constructed for several interesting classes of groups, for example,  word-hyperbolic groups (Juan-Pineda, Leary, \cite{LearyPineda}), relatively hyperbolic groups (Lafont, Ortiz, \cite{LafontOrtiz}), virtually polycyclic groups (L\"{u}ck,Weiermann, \cite{LuckWeiermann}) and $\mathrm{CAT}(0)$-groups (Farley, \cite{Farley}, L\"{u}ck, \cite{Luck3}).

In \cite{DP}, we have shown that every elementary amenable group $G$ of finite Hirsch length and cardinality $\aleph_n$ admits a finite dimensional model for $\underline{\underline{E}}G$.
However, an explicit upper bound for the minimal dimension of a model for $\underline{\underline{E}}G$ was not given. In Section 4, we prove the following.
\begin{thma} \label{th: intro main theorem 1} Let $G$ be an elementary amenable group of finite
Hirsch length $h$ and cardinality $\aleph_n$. Then there exists a model for $\underline{\underline{E}}G$ of dimension $n+h+2$.
\end{thma}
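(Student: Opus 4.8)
\emph{Sketch of the argument I would attempt.}
I would prove the theorem in two stages: first reduce the cardinality~$\aleph_n$ to the countable case, and then handle a countable elementary amenable group of Hirsch length~$h$ by means of the Lück--Weiermann decomposition. For the reduction, recall that a group of cardinality~$\aleph_n$ is the union of a continuous well-ordered chain of subgroups of cardinality~$\aleph_{n-1}$, each of which is again elementary amenable of Hirsch length at most~$h$; a Bredon-cohomological $\lim^1$-type argument (the one used to prove finiteness in \cite{DP}) shows that forming such a union raises the geometric dimension for virtually cyclic subgroups by at most one. Iterating $n$ times, it then suffices to show that every countable elementary amenable group $G$ of Hirsch length~$h$ has a model for $\underline{\underline{E}}G$ of dimension at most $h+2$.

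For the countable case I would use the $G$-pushout of Lück and Weiermann \cite{LuckWeiermann} that builds $\underline{\underline{E}}G$ out of $\underline{E}G$ and, for each commensurability class $[V]$ of infinite virtually cyclic subgroups, the spaces $E_{\mathcal{F}[V]}(N_G[V])$, where $N_G[V]$ is the commensurator of~$V$ and $\mathcal{F}[V]$ is the family of subgroups of $N_G[V]$ that are finite or commensurable with~$V$. Estimating the dimension of this homotopy pushout gives
\[
\underline{\underline{\mathrm{gd}}}(G)\ \le\ \max\Bigl\{\,\underline{\mathrm{gd}}(G),\ 1+\sup_{[V]}\underline{\mathrm{gd}}\bigl(N_G[V]\bigr),\ \sup_{[V]}\mathrm{gd}_{\mathcal{F}[V]}\bigl(N_G[V]\bigr)\,\Bigr\}.
\]
The two terms involving $\underline{\mathrm{gd}}$ are harmless: a countable elementary amenable group of Hirsch length $h'$ has $\underline{\mathrm{cd}}\le h'+1$, and since $N_G[V]\le G$ has Hirsch length at most~$h$, both $\underline{\mathrm{gd}}(G)$ and $1+\underline{\mathrm{gd}}(N_G[V])$ are at most $h+2$ (the finitely many low-dimensional cases where $\underline{\mathrm{gd}}$ could exceed $\underline{\mathrm{cd}}$ being checked by hand). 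Everything therefore comes down to the single inequality $\mathrm{gd}_{\mathcal{F}[V]}(N_G[V])\le h+2$.

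Establishing this last bound is the heart of the matter and the step I expect to be the main obstacle, since it requires genuinely understanding the commensurator $W:=N_G[V]$ of an infinite virtually cyclic subgroup inside an elementary amenable group of finite Hirsch length. The tempting shortcut --- that $W$ should contain a normal subgroup commensurable with~$V$, so that $E_{\mathcal{F}[V]}W$ would simply be $\underline{E}$ of a quotient of Hirsch length $h-1$ --- already fails for $\mathrm{BS}(1,2)$, so a real structural analysis is needed. I would study $W$ through its canonical action on $V$ by commensuration, $\phi\colon W\to\mathrm{Comm}(V)$: the image $\phi(W)$, being elementary amenable of finite Hirsch length, is finitely generated (abelian of rank $r$, possibly extended by an order-two inversion), while the kernel $K=\ker\phi$ consists of the elements centralizing a finite-index subgroup of~$V$ and can be treated with the structure theory of elementary amenable groups of finite Hirsch length (a locally finite radical with virtually torsion-free soluble quotient). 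In $K$ the subgroup $V$ is central up to finite index and up to the locally finite radical, which identifies $E_{\mathcal{F}[V]\cap K}(K)$ --- up to bounded error --- with $\underline{E}$ of a quotient of $K$ that kills a copy of~$\mathbb{Z}$, giving it dimension roughly $h(K)+1$; one then reassembles $E_{\mathcal{F}[V]}(W)$ from this by extending over $\phi(W)$ (a fibration-type construction costing about $r$ more dimensions) and finally over the index-two inversion. The crucial point that keeps the total at $h+2$ rather than something like $2h$ is the additivity of Hirsch length, $h(K)+r=h(W)\le h$; making this count precise, controlling the several small ``$+1$''s that arise in the gluings, keeping track of the (finitely many relevant) commensurability classes, and dispatching the exceptional low-Hirsch-length cases, is where the real work of Section~4 lies.
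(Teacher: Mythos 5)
You follow the paper's global architecture: reduce to the countable case at a cost of $+n$ (you by a transfinite chain argument, the paper more directly via the $n$-dimensional model for the family of countable subgroups, Theorem~\ref{th: fin gen subgroups} and Corollary~\ref{cor: useful combination}(ii)), then run the L\"uck--Weiermann pushout (Theorem~\ref{th: push out}) over commensurability classes of infinite virtually cyclic subgroups, so that everything reduces to a uniform bound on $\mathrm{gd}_{\mathcal{F}[V]}(\mathrm{Comm}_{\Gamma}[V])$. Where you genuinely differ is in how that bound is obtained. The paper invokes Wehrfritz's structure theorem (Theorem~\ref{th: struc elem am}) to present the commensurator as (locally finite)-by-(torsion-free nilpotent)-by-(finitely generated virtually abelian), splits into the cases where the image of $V$ in the virtually abelian quotient is finite or infinite, and feeds this into the Section~3 analysis of commensurators inside $N\rtimes\mathbb{Z}$ (Lemma~\ref{lemma: key lemma}, Propositions~\ref{prop: finitely generated} and~\ref{prop: loc fin by virt poly}), arriving at $\mathrm{gd}_{\mathcal{F}[V]}(\mathrm{Comm}_{\Gamma}[V])\le h+1$. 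You instead use the commensuration homomorphism $\phi\colon W=\mathrm{Comm}_{\Gamma}[V]\to\mathrm{Comm}(V)\cong\mathbb{Q}^{\times}$: finiteness of Hirsch length forces $\phi(W)$ to be finitely generated abelian (possibly extended by the inversion), every member of $\mathcal{F}[V]$ has finite image under $\phi$, so Corollary~\ref{cor: gd ext bounds} applies with the family of finite subgroups of $\phi(W)$, the preimages of finite subgroups are handled by local (virtual) centrality of $V$ together with Lemma~\ref{lemma: virt poly} and the Flores--Nucinkis bound, and additivity of Hirsch length, $h(\ker\phi)+\mathrm{rk}(\phi(W))=h(W)\le h$, closes the count. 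This is a workable and arguably more intrinsic organization: it avoids the case split 2.a/2.b and Proposition~\ref{prop: loc fin by virt poly}, though the structure theory still enters through the $\underline{\mathrm{cd}}\le h'+1$ bound you quote for the quotients.

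Three points in your sketch need repair, none fatal. First, $V$ is not central in $\ker\phi$ ``up to finite index and the locally finite radical'': each element of the kernel centralizes some finite-index subgroup of $V$, but no single finite-index subgroup need work for all of $\ker\phi$. The correct statement is local --- every finitely generated subgroup containing $V$ centralizes (or, in the preimage of the torsion of $\phi(W)$, normalizes) a finite-index subgroup $V_0\le V$ --- which is exactly Proposition~\ref{prop: finitely generated}(b); the resulting $+1$ is then paid through Corollary~\ref{cor: useful combination}(i), not through the locally finite radical. Second, there may be infinitely many commensurability classes; the pushout only needs a bound uniform over all of them (which your argument provides), so ``finitely many relevant classes'' should not enter the count. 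Third, the low-Hirsch-length cases are not merely a $\underline{\mathrm{cd}}$-versus-$\underline{\mathrm{gd}}$ discrepancy: for $h=1$ the pushout needs $\underline{\mathrm{gd}}(\mathrm{Comm}_{\Gamma}[V])\le 2$, and $\underline{\mathrm{cd}}\le 2$ alone would only give $\underline{\mathrm{gd}}\le 3$ and hence the weaker conclusion $h+3$; one must use, as the paper does in Case 1 via Lemma~\ref{lemma: locally finite by virtually cyclic}, that finitely generated subgroups of the commensurator containing $V$ are virtually cyclic --- a fact your kernel analysis does deliver, since a finitely generated group which is finite-index over a centralizing kernel with $h=1$ is virtually cyclic.
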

Let us point that if an elementary amenable group $G$ has infinite Hirsch length then it cannot admit a finite dimensional model for $\underline{\underline{E}}G$. In Section 5, we  give examples of groups that show that the  bounds obtained in the theorem are attained for each positive value of the Hirsch length $h$ and any cardinality $\aleph_n$. Theorem A thus answers  problems 46.1 and 46.2  stated by Lafont in \cite{Lafont} for the class of elementary amenable groups of cardinality $\aleph_n$.

The smallest possible dimension of a model for $\underline{\underline{E}}G$ is  called the geometric dimension of $G$ for the family of virtually cyclic subgroups, denoted by $\underline{\underline{\mathrm{gd}}}(G)$.  In Section 6, we address the following basic question.
\begin{ques} If $\Gamma$ fits into a short exact sequence of groups $N \rightarrow \Gamma \to Q$ such that $\underline{\underline{\mathrm{gd}}}(N)$ and  $\underline{\underline{\mathrm{gd}}}(Q)$ are both finite, is $\underline{\underline{\mathrm{gd}}}(\Gamma)$ also finite?
\end{ques}

The answer to this question and the analogous  question for the family of finite subgroups  is unknown in general (see \cite{misl}, \cite[5.5]{Luck2}). We give a positive answer provided that the group $Q$ is torsion-free and the number $\delta(N)$ defined below is finite.

For a given  group $N$ of cardinality $\aleph_n$, we let $\mathcal{E}_N$ be the collection of all groups that can be formed as  an extension of a finitely generated subgroup of $N$ by a finite cyclic group.  We then  define \[\delta(N)= \sup\{\underline{\mathrm{gd}}(G)\ |  G \in \mathcal{E}_N\}+n,\]
where $\underline{\mathrm{gd}}(G)$ is the geometric dimension of $G$ for proper actions.
\begin{thmb} Let $N$ be a group of cardinality $\aleph_n$ and let
$N \rightarrow \Gamma \to Q$ be  a short exact sequence of groups
such that $Q$ is torsion-free. Then
 \[  \underline{\underline{\mathrm{gd}}}(\Gamma) \leq \underline{\underline{\mathrm{gd}}}(Q)+ \max\{\underline{\underline{\mathrm{gd}}}(N), \delta(N)\}+2. \]
\end{thmb}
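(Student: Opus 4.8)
The strategy is to combine a pull-back over $Q$ with the push-out of L\"uck--Weiermann \cite{LuckWeiermann}; the delicate point is to control the classifying spaces for the families attached to the commensurators of infinite virtually cyclic subgroups that surject onto $\mathbb{Z}$.
\emph{Step 1 (reduction to $M\rtimes\mathbb{Z}$).} Let $f\colon\Gamma\to Q$ be the projection. Since $Q$ is torsion-free, each of its virtually cyclic subgroups is trivial or infinite cyclic, so pulling back a minimal model for $\underline{\underline{E}}Q$ along $f$ gives a contractible $\Gamma$-CW-complex of dimension $\underline{\underline{\mathrm{gd}}}(Q)$ with isotropy family $\mathcal{H}=\{H\le\Gamma: f(H)\ \text{is virtually cyclic}\}$. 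As $\mathcal{H}$ contains every virtually cyclic subgroup of $\Gamma$, the transitivity principle for classifying spaces gives
\[
\underline{\underline{\mathrm{gd}}}(\Gamma)\le\underline{\underline{\mathrm{gd}}}(Q)+\sup_{H\in\mathcal{H}}\underline{\underline{\mathrm{gd}}}(H).
\]
If $f(H)$ is trivial then $H\le N$ and $\underline{\underline{\mathrm{gd}}}(H)\le\underline{\underline{\mathrm{gd}}}(N)$; otherwise $f(H)\cong\mathbb{Z}$ and, $\mathbb{Z}$ being free, $H\cong M\rtimes\mathbb{Z}$ with $M=H\cap N\le N$. So it is enough to bound $\underline{\underline{\mathrm{gd}}}(M\rtimes\mathbb{Z})$ by $\max\{\underline{\underline{\mathrm{gd}}}(N),\delta(N)\}+2$ for every $M\le N$ and every $\mathbb{Z}$-action on $M$. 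We may assume $\delta(N)$ and $\underline{\underline{\mathrm{gd}}}(N)$ finite, and we record $\underline{\mathrm{gd}}(M)\le\underline{\mathrm{gd}}(N)\le\delta(N)$, since the finitely generated subgroups of $N$ lie in $\mathcal{E}_N$ and $N$ is an $\omega_n$-filtered union of them (cf.~\cite{DP}).

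\emph{Step 2 (the push-out for $G=M\rtimes\mathbb{Z}$).} Run the L\"uck--Weiermann push-out \cite{LuckWeiermann} for $G$ with the families $\mathcal{F}in\subseteq\mathcal{VC}$ of finite and of virtually cyclic subgroups, using commensurability; this gives
\[
\underline{\underline{\mathrm{gd}}}(G)\le\max\Big\{\underline{\mathrm{gd}}(G),\ \sup_{[V]}\big(\underline{\mathrm{gd}}(N_G[V])+1\big),\ \sup_{[V]}\mathrm{gd}_{\mathcal{VC}[V]}(N_G[V])\Big\},
\]
where $[V]$ runs over commensurability classes of infinite virtually cyclic subgroups of $G$, $N_G[V]=\mathrm{Comm}_G(V)$, and $\mathcal{VC}[V]$ is the family of subgroups of $N_G[V]$ that are finite or commensurable with $V$. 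Since $\mathbb{Z}$ is torsion-free, pulling back $\underline{E}\mathbb{Z}$ and refining over the vertex group $M$ shows $\underline{\mathrm{gd}}(G)\le\underline{\mathrm{gd}}(M)+1\le\delta(N)+1$; the same argument applied to $N_G[V]$ --- whose intersection with $M$ is a subgroup of $N$ and whose quotient by it embeds in $\mathbb{Z}$ --- gives $\underline{\mathrm{gd}}(N_G[V])+1\le\delta(N)+2$. It remains to bound the last term.

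\emph{Step 3 (the commensurators).} Fix $[V]$ and an infinite cyclic $C=\langle c\rangle\le V$ of finite index, and put $W=N_G[V]=\mathrm{Comm}_G(C)$. If $V\le M$, then every member of $\mathcal{VC}[V]$ lies in $W\cap M=\mathrm{Comm}_M(V)\le N$ (a finite subgroup is torsion, a subgroup commensurable with $V$ maps finitely to $\mathbb{Z}$), so pulling back $\underline{E}(W/(W\cap M))$ and refining gives $\mathrm{gd}_{\mathcal{VC}[V]}(W)\le\mathrm{gd}_{\mathcal{VC}[V]}(W\cap M)+1$; combining this with the general inequality $\mathrm{gd}_{\mathcal{VC}[V']}(P)\le\underline{\underline{\mathrm{gd}}}(P)+1$ --- valid for any group $P$ and infinite virtually cyclic $V'\le P$, since over each virtually cyclic subgroup the relevant difference family has a one-dimensional model --- yields $\mathrm{gd}_{\mathcal{VC}[V]}(W)\le\underline{\underline{\mathrm{gd}}}(N)+2$. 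If $V\not\le M$, then $C$ maps onto a nontrivial subgroup of $\mathbb{Z}$, and a direct computation shows the modulus homomorphism $W\to\mathbb{Q}^\times\rtimes\mathbb{Z}/2$ is trivial, so $W=\bigcup_{m\ge1}C_W(c^m)$ with $\langle c^m\rangle$ central in $K_m:=C_W(c^m)$. Hence a model for $E_{\mathcal{VC}[V]\cap K_m}(K_m)$ is the pull-back of $\underline{E}(K_m/\langle c^m\rangle)$; since $c^m\notin M$, the group $K_m/\langle c^m\rangle$ is an extension of $C_M(c^m)\le N$ by a finite cyclic group, so all its finitely generated subgroups lie in $\mathcal{E}_N$ and $\underline{\mathrm{gd}}(K_m/\langle c^m\rangle)\le\delta(N)$ by the cardinality bound; feeding this through the countable union $W=\bigcup_m K_m$ gives $\mathrm{gd}_{\mathcal{VC}[V]}(W)\le\delta(N)+1$. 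In either case the last term of Step 2 is $\le\max\{\underline{\underline{\mathrm{gd}}}(N),\delta(N)\}+2$, so $\underline{\underline{\mathrm{gd}}}(M\rtimes\mathbb{Z})\le\max\{\underline{\underline{\mathrm{gd}}}(N),\delta(N)\}+2$, and Step 1 concludes.

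\emph{Main obstacle.} The heart of the argument is the bound on $\mathrm{gd}_{\mathcal{VC}[V]}(N_G[V])$ when $V$ surjects onto $\mathbb{Z}$: one must describe $\mathrm{Comm}_G(V)$ precisely enough --- via the vanishing of the modulus cocycle it is an increasing union of centralizers $C_G(c^m)$ in which $\langle c^m\rangle$ is central --- to pass to the quotients $C_G(c^m)/\langle c^m\rangle$ and recognise them, up to the $\omega_n$-filtration responsible for the ``$+n$'' in $\delta(N)$, as built from members of $\mathcal{E}_N$. The case $V\le M$ is comparatively soft, and is precisely what forces $\underline{\underline{\mathrm{gd}}}(N)$, rather than merely $\delta(N)$, into the final bound.
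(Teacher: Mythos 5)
Your architecture is essentially the paper's: Step 1 is the reduction through the torsion-free quotient (Corollary \ref{cor: gd ext bounds}), Step 2 is the L\"uck--Weiermann push-out (Theorem \ref{th: push out}) over commensurators of infinite cyclic subgroups of $M\rtimes\mathbb{Z}$, and Step 3 reproduces the paper's dichotomy according to whether $V$ lies in the kernel. Your description of $\mathrm{Comm}_G(C)$ as the ascending union of the centralizers $C_G(c^m)$, with $\langle c^m\rangle$ central, is the same computation as Lemma \ref{lemma: key lemma} and Proposition \ref{prop: finitely generated}, and the quotients $C_G(c^m)/\langle c^m\rangle$, whose finitely generated subgroups lie in $\mathcal{E}_N$, are exactly how $\delta(N)$ enters in the paper (via Lemma \ref{lemma: virt poly}). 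Two genuine variants are welcome: the purely geometric inequality $\mathrm{gd}_{\mathcal{VC}[V]}(P)\le\underline{\underline{\mathrm{gd}}}(P)+1$, which correctly replaces the cohomological Lemma \ref{lemma: fam[H]} in the case $V\le M$, and the telescope over the countable chain $K_m$ replacing the paper's passage through finitely generated subgroups plus Corollary \ref{cor: useful combination}(i).

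The concrete problem is the asserted inequalities $\underline{\mathrm{gd}}(N)\le\delta(N)$ (Step 1) and $\underline{\mathrm{gd}}(K_m/\langle c^m\rangle)\le\delta(N)$ (Step 3). The filtration by finitely generated subgroups of a group of cardinality $\aleph_n$ costs $n+1$, not $n$ (Theorem \ref{th: fin gen subgroups}, Corollary \ref{cor: useful combination}(i)); this is precisely why Lemma \ref{lemma: delta} only gives $\delta(N)+1$, and the stronger statement is simply false: for $N$ an infinite countable locally finite group one has $\delta(N)=0$ but $\underline{\mathrm{gd}}(N)=1$, and likewise $K_m/\langle c^m\rangle$ can be infinite locally finite. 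Correcting $\delta(N)$ to $\delta(N)+1$ is harmless in your bound for $\underline{\mathrm{gd}}(G)$ and in both branches of Step 3, where the final $+2$ absorbs it (you get $\delta(N)+2\le\max\{\underline{\underline{\mathrm{gd}}}(N),\delta(N)\}+2$), but it is not harmless in the second entry of your Step 2 maximum, i.e.\ the push-out hypothesis $\underline{\mathrm{gd}}(N_G[V])\le k-1$: the corrected estimate reads $\underline{\mathrm{gd}}(N_G[V])+1\le\underline{\mathrm{gd}}(N)+2\le\min\{\underline{\underline{\mathrm{gd}}}(N),\delta(N)\}+3$, which exceeds the target $\max\{\underline{\underline{\mathrm{gd}}}(N),\delta(N)\}+2$ exactly in the borderline case $\delta(N)=\underline{\underline{\mathrm{gd}}}(N)<\underline{\mathrm{gd}}(N)$. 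So as written this step rests on an incorrect claim, and you need either a proof that $\underline{\mathrm{gd}}(N)\le\max\{\underline{\underline{\mathrm{gd}}}(N),\delta(N)\}$ or another way of forcing the commensurators' proper geometric dimension below $k-1$. (The paper's own terse final step, invoking Theorem \ref{th: cd push out} together with Lemma \ref{lemma: delta}, is tight at the very same hypothesis; but your write-up covers the tight spot only by the false Step 1 inequality, which is why it must be flagged.)
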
 
There are several types of groups $N$ for which  $\delta(N)$ is bounded by $\underline{\mathrm{gd}}(N)+1$ and it is a general fact that $\underline{\mathrm{gd}}(N)$ is at most as large as $\underline{\underline{\mathrm{gd}}}(N)+1$.  For example, if $N$ is a group with $\underline{\mathrm{gd}}(N)=1$, such as a virtually free group, then $\delta(N)\leq 1$ (see \cite[1.1-2]{Dunwoody}).  Also, if  $N$ is a finite extension of an orientable surface group, then $\delta(N)=\underline{\mathrm{gd}}(N)=2$ (see \cite[4.4(2)]{mislin}). Lastly, if $N$ is a countable elementary amenable group of finite Hirsch length $h(N)$, then $\delta(N)\leq h(N)+1$ and  $h(N)\leq \underline{\mathrm{gd}}(N)$ (see \cite{FloresNuc}). 

There are also groups $N$ that have $\delta(N)$  greater than $\underline{\mathrm{gd}}(N)+1$. In fact, in Example \ref{ex: counterex}, we show that there are groups for which $\delta(N)$ is finite but arbitrarily larger than $\underline{\mathrm{gd}}(N)$. A  general method of constructing  such groups is given in Theorem C. Let us note that we are not aware of an example of a group $N$ for which $\underline{\mathrm{gd}}(N)$ is finite but  $\delta(N)$ is infinite. In Remark \ref{remk: Question of Lafont}, we point out how an example of a group with this property may be used to construct a $2$-generated group satisfying $\underline{\mathrm{gd}}(\Gamma)<\infty$ and $\underline{\underline{\mathrm{gd}}}(\Gamma)=\infty$, thus answering a question of Lafont in \cite[46.4]{Lafont}. 

Recall that if $X$ is a property of a group, then a group is said to be of {type $VX$} if it contains a finite index subgroup of type $X$. Given a family of subgroups $\mathcal{F}$ of a group $G$, one can consider the {Bredon cohomological dimension} $\mathrm{cd}_{\mathcal{F}}(G)$ of the group $G$ for the family $\mathcal{F}$ (see Section 2), which satisfies \[\mathrm{cd}_{\mathcal{F}}(G) \leq  \mathrm{gd}_{\mathcal{F}}(G) \leq \max\{3, \mathrm{cd}_{\mathcal{F}}(G) \}.\] 
When $\mathcal{F}$ is the family of finite or virtually cyclic subgroups of the group $G$, then $\mathrm{cd}_{\mathcal{F}}G$ is denoted by $\underline{\mathrm{cd}}G$ or $\underline{\underline{\mathrm{cd}}}G$, respectively.
\begin{thmc}\label{th: lueck's question}Let $\{L_i \;|\; i=1, \dots ,s\}$ be a collection of finite acyclic flag complexes. Let $\{p_i \;|\; i=1, \dots ,s\}$ be a collection of primes and  $q$ be a prime that is not in this set. For each $i$, suppose the complex $L_i$ is of dimension $n_i\geq 1$ and the cyclic group $C_{p_i}$ acts admissibly and simplicially on $L_i$ such that $$\mathrm{H}_{n_i-1}(L_i^{C_{p_i}}, \F_q)\ne 0. $$
For each $i$, denote by $H_{i}$ the Bestvina-Brady group associated to $L_i$. Let $H_i\rtimes C_{p_i}$ be the semi-direct product formed by the automorphism action of $C_{p_i}$ on $H_i$  that is induced by the action of $C_{p_i}$ on $L_i$.
\begin{enumerate}
\item[(a)]  Let  $\Gamma_s=\prod_{i=1}^s H_{L_i}\rtimes C_{p_i}$. Then $\Gamma_s$ is an integral linear group of type $VFP$ such that $$\mathrm{vhd}(\Gamma_s)=\mathrm{vcd}(\Gamma_s)=\sum_{i=1}^s n_i \;\;\;\; \mbox{and} \;\;\;\; \underline{\mathrm{hd}}(\Gamma_s)=\underline{\mathrm{cd}}(\Gamma_s)=s+\sum_{i=1}^s n_i.$$
\item[(b)]  Suppose further that $s\geq 2$ and $p_i\ne p_j$ for all $1\leq i< j\leq s$. Let $\Lambda_s=(\prod_{i=1}^s H_{L_i})\rtimes \Z$  be the pull-back of $\Gamma_s$ under projection of the infinite cyclic group $\Z$ onto $\prod_{i=1}^s  C_{p_i}$.  Then $\Lambda_s$ is an integral linear group of type $FP$ such that $$\mathrm{hd}(\Lambda_s)=\mathrm{cd}(\Lambda_s)=1+\sum_{i=1}^s n_i \;\;\;\; \mbox{and} \;\;\;\; s+\sum_{i=1}^s n_i \leq \underline{\underline{\mathrm{cd}}}(\Lambda_s) < \infty.$$
\end{enumerate}
Moreover, if $L_i$ is contractible for each $i=1, \ldots ,s$, then $\Gamma_s$ is of type $VF$ and $\Lambda_s$ is of type $F$. 
\end{thmc}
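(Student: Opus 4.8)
The plan is to build everything out of the Bestvina--Brady construction and track how the relevant dimension invariants behave under the two operations involved: forming the semi-direct product $H_{L_i}\rtimes C_{p_i}$ and taking the finite direct product. Recall that for a finite flag complex $L$ the associated right-angled Artin group $A_L$ is integral linear and of type $F$ (indeed $F_\infty$), and the Bestvina--Brady group $H_L$ is the kernel of the map $A_L\to\Z$ sending every generator to $1$. By the Bestvina--Brady theorem, $H_L$ is $FP$ exactly when $L$ is acyclic, of type $F$ exactly when $L$ is simply connected, and one has $\mathrm{cd}(H_L)=\dim L$ when $L$ is acyclic (and more generally $\mathrm{hd}(H_L)=\dim L$ under the stated hypotheses). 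Since $A_L$ is integral linear and $H_L\le A_L$, the group $H_L$ is integral linear; an admissible simplicial action of $C_{p_i}$ on $L_i$ induces an automorphism of $A_{L_i}$ preserving the height function, hence an automorphism of $H_{L_i}$, and the resulting semi-direct product $H_{L_i}\rtimes C_{p_i}$ is again integral linear (embed it in $A_{L_i}\rtimes C_{p_i}$, which is linear). Finite direct products and finite-index overgroups preserve integral linearity, so $\Gamma_s$ and $\Lambda_s$ are integral linear.

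For part (a): $H_{L_i}$ has finite index $p_i$ in $H_{L_i}\rtimes C_{p_i}$, so the latter is of type $VFP$ (and of type $VF$ when each $L_i$ is contractible, since then $H_{L_i}$ is of type $F$), and $\mathrm{vcd}(H_{L_i}\rtimes C_{p_i})=\mathrm{cd}(H_{L_i})=n_i$; taking products gives $\mathrm{vcd}(\Gamma_s)=\sum n_i$, and $\mathrm{vhd}=\mathrm{vcd}$ since a torsion-free group of finite $\mathrm{vcd}$ computing both via a finite-index subgroup has $\mathrm{hd}=\mathrm{cd}$ in this range. The interesting equality is $\underline{\mathrm{cd}}(\Gamma_s)=s+\sum n_i$. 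For the upper bound one uses the product formula for $\underline{\mathrm{cd}}$ together with $\underline{\mathrm{cd}}(H_{L_i}\rtimes C_{p_i})\le \mathrm{cd}(H_{L_i})+1=n_i+1$ (a group that is virtually torsion-free of $\mathrm{vcd}$ $n_i$ with a cyclic quotient of its finite part has $\underline{\mathrm{cd}}$ at most $n_i+1$). For the lower bound one produces a nonzero Bredon cohomology class in degree $s+\sum n_i$: this is where the hypothesis $\mathrm{H}_{n_i-1}(L_i^{C_{p_i}},\F_q)\ne 0$ enters. The point is that $H_{L_i}^{C_{p_i}}=H_{L_i^{C_{p_i}}}$-type considerations, or rather a Mayer--Vietoris / centralizer computation, force $\mathrm{H}^{n_i}(W_i;\F_q)\ne 0$ for $W_i$ a finite subgroup and its normalizer in $H_{L_i}\rtimes C_{p_i}$, contributing an extra $1$ to $\underline{\mathrm{cd}}$ beyond $\mathrm{vcd}$; one then multiplies these classes across the $s$ factors using the Künneth-type behaviour of Bredon cohomology with $\F_q$-coefficients, using that $q$ is coprime to all the $p_i$ so the relevant cohomology does not vanish. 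The equality $\underline{\mathrm{hd}}=\underline{\mathrm{cd}}$ follows because the witnessing class lives in $\F_q$-cohomology, so the homological and cohomological dimensions agree at the top.

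For part (b): $\Lambda_s$ is by construction the pullback of $\Gamma_s\to\prod C_{p_i}$ along $\Z\to\prod C_{p_i}$ (the latter surjective because the $p_i$ are distinct, so $\prod C_{p_i}\cong C_{p_1\cdots p_s}$ and $\Z$ surjects onto any finite cyclic group). Thus $\Lambda_s$ is torsion-free, $\prod H_{L_i}\lhd\Lambda_s$ with quotient $\Z$, so $\mathrm{cd}(\Lambda_s)\le \mathrm{cd}(\prod H_{L_i})+1=\sum n_i+1$ and equality holds by considering $\F_q$-cohomology of $\prod H_{L_i}$ times the extra $\Z$; it is of type $FP$ (type $F$ when the $L_i$ are contractible) as an extension of an $FP$ group by $\Z$. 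The key inequality $\underline{\underline{\mathrm{cd}}}(\Lambda_s)\ge s+\sum n_i$ comes from Lück--Weiermann: since $\Lambda_s$ is torsion-free, $\underline{\underline{\mathrm{cd}}}(\Lambda_s)$ is controlled by $\mathrm{cd}(\Lambda_s)$ and by the cohomological dimensions of the commensurators of the infinite cyclic subgroups; one exhibits an infinite cyclic subgroup $C\le\Lambda_s$ whose commensurator $N_{\Lambda_s}[C]$ maps onto something built from all $s$ of the $H_{L_i}\rtimes C_{p_i}$-pieces (the preimage of $C$ contains a conjugate of each finite $C_{p_i}$ up to commensurability), forcing a relative Bredon cohomology group in degree $s+\sum n_i$ to be nonzero — essentially the same $\F_q$-Künneth class from part (a), now appearing in the "singular part" of $\underline{\underline{E}}\Lambda_s$. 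Finiteness of $\underline{\underline{\mathrm{cd}}}(\Lambda_s)$ follows from Theorem B (or directly from Lück--Weiermann plus the fact that each commensurator has finite $\underline{\mathrm{cd}}$). The main obstacle is the lower bound computation in (a): one must show that the $\F_q$-cohomology of the (normalizers of) finite subgroups really does produce a nonzero class in the top degree $s+\sum n_i$ and that these classes survive the product — this is where one needs the admissibility of the $C_{p_i}$-actions (so fixed point sets are well-behaved subcomplexes), the nonvanishing hypothesis $\mathrm{H}_{n_i-1}(L_i^{C_{p_i}},\F_q)\ne 0$, and the coprimality of $q$ with the $p_i$; everything else is an assembly of standard Bestvina--Brady facts and the Lück--Weiermann machinery.
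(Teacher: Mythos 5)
Your overall skeleton is the same as the paper's (Bestvina--Brady facts plus subadditivity and L\"uck's bound $\underline{\mathrm{cd}}(H_{L_i}\rtimes C_{p_i})\leq n_i+1$ for the upper bounds in (a); for (b), a central infinite cyclic subgroup $C\leq\Lambda_s$ with $\Lambda_s/C\cong\Gamma_s$ fed into the L\"uck--Weiermann machinery, which is exactly the paper's Lemma \ref{lem: delta(N)} via Lemmas \ref{lemma: virt poly} and \ref{lemma: fam[H]}). The genuine gap is the lower bound $\underline{\mathrm{hd}}(\Gamma_s)\geq s+\sum n_i$ in (a), which is the heart of the theorem and which your sketch does not prove. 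You assert that $\mathrm{H}_{n_i-1}(L_i^{C_{p_i}},\F_q)\neq 0$ ``forces $\mathrm{H}^{n_i}(W_i;\F_q)\neq 0$ for the normalizer, contributing an extra $1$ to $\underline{\mathrm{cd}}$ beyond $\mathrm{vcd}$,'' but neither half of this is justified. First, passing from degree $n_i-1$ homology of the fixed complex to degree $n_i$ (co)homology of the Weyl group needs the identification $C_{H_{L_i}}(C_{p_i})=H_{L_i^{C_{p_i}}}$ (Leary--Nucinkis) together with the conjugacy of all nontrivial finite subgroups and, crucially, Leary--Saadeto\u{g}lu's computation of the homology of Bestvina--Brady groups over non-acyclic complexes. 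Second, and more seriously, you give no mechanism by which nonvanishing Weyl-group cohomology in degree $n_i$ raises $\underline{\mathrm{cd}}$ above $\mathrm{vcd}$; that bridge is precisely Mart\'{\i}nez-P\'{e}rez's theorem (which the paper cites but deliberately does not use), whereas the paper's own argument is a finite-versus-infinite dimensional comparison: feeding the coefficient system ($\F_q$ at the trivial subgroup, $0$ at the nontrivial finite subgroups) into the Tor Mayer--Vietoris sequence of Theorem \ref{th: ext} yields an exact sequence $\mathrm{H}_{n+1}^{\mathcal{F}}(\Gamma,V)\rightarrow \mathrm{H}_n(C_{H_L}(C_p),\F_q)\rightarrow \mathrm{H}_n(\Gamma,\F_q)$, in which the middle term is \emph{infinite} dimensional over $\F_q$ (by the hypothesis and Corollary 7 of Leary--Saadeto\u{g}lu) while the right-hand term is \emph{finite} dimensional because $\Gamma$ is $FP_\infty$; this is what forces Bredon homology in degree $n+1$ to be nonzero. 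Nothing in your proposal plays that role. Likewise, the ``K\"unneth-type behaviour'' you invoke for the $s$-fold product is not an off-the-shelf fact for Bredon (co)homology with the family of finite subgroups; the paper handles it by induction via the Mart\'{\i}nez-P\'{e}rez spectral sequence, an explicit identification of the coefficient functor as $V_{k-1}\otimes_{\Z}\mathrm{H}_{n_k+1}^{\mathcal{F}}(\G,V)$, and exactness of tensoring with an $\F_q$-vector space.

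A secondary issue is your finiteness claim in (b): deducing $\underline{\underline{\mathrm{cd}}}(\Lambda_s)<\infty$ from Theorem B requires $\delta(N)<\infty$ for $N=\prod_i H_{L_i}$, which you do not verify and which is not at all clear (the finite cyclic groups in the definition of $\delta$ are unrestricted, and Example \ref{ex: counterex} shows $\delta$ can exceed $\underline{\mathrm{gd}}$ by arbitrarily much); the paper avoids this by noting that $\Lambda_s$ contains $N\times\Z$ as a finite-index subgroup inside a product of CAT(0) groups, then applying L\"uck's CAT(0) theorem, the L\"uck--Weiermann product bound, and finite-index invariance. Your parenthetical alternative (``each commensurator has finite $\underline{\mathrm{cd}}$'') is also not enough as stated, since the L\"uck--Weiermann criterion needs control of $\mathrm{cd}_{\mathcal{F}[H]}$ of the commensurators, not merely of their dimension for the family of finite subgroups.
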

A question posed by   L\"{u}ck in \cite{Luck3} (see also \cite{LuckWeiermann}) asks whether or not the inequality $\underline{\underline{\mathrm{gd}}}(\Gamma) \leq \underline{\mathrm{gd}}(\Gamma)+1$ holds for every group $\Gamma$. In Corollary \ref{cor: cor main theorem 1}, we prove that all countable elementary amenable groups of finite Hirsch length satisfy this inequality. On the other hand, in Example \ref{ex: counterex} we apply Theorem C(b) to construct groups that do  not satisfy this inequality. 

We do not claim originality for the statement on  the Bredon cohomological dimension of $\G_s$  obtained in Theorem C(a) in the case of $s=1$. This result follows from a theorem of Mart\'{\i}nez-P\'{e}rez  (see \cite[3.3]{Martinez12}), which led us to the construction of the groups $\G_s$. However, we note that our methods for computing the  Bredon cohomological dimension of these groups differ from  those of \cite{Martinez12}.

In many computations, we use a Mayer-Vietoris long exact sequence of L\"{u}ck and Weiermann which can be constructed using a certain push-out of classifying spaces. In the last section, we give a purely algebraic proof of the existence of this sequence which has the added benefit that it can be generalized to Tor and Ext functors.

\section{Preliminaries and Notations}
Let $G$ be a discrete group and let $\mathcal{F}$ be a \emph{family} of subgroups of $G$, i.e.~a collection of subgroups of $G$ that is closed under conjugation and taking subgroups. A \emph{classifying space of $G$} for the family $\mathcal{F}$ is a $G$-CW-complex $X$ such that
 $X^H=\emptyset$ when $H \notin \mathcal{F}$ and $X^H$ is contractible for every $H \in \mathcal{F}$.
A classifying space of $G$ for the family $\mathcal{F}$ is also called a \emph{model for $E_{\mathcal{F}}G$}. It can be shown such a model always exists
and  is unique up to $G$-homotopy. The smallest possible dimension of a model for $E_{\mathcal{F}}G$  is called the \emph{geometric dimension} of $G$ for the family $\mathcal{F}$ and is denoted by $\mathrm{gd}_{\mathcal{F}}(G)$. When a finite dimensional model does not exist, then $\mathrm{gd}_{\mathcal{F}}(G)$ is said to be infinite.  When $\mathcal{F}$ is the family of finite (virtually cyclic) subgroups, then $E_{\mathcal{F}}G$ and $\mathrm{gd}_{\mathcal{F}}G$ are denoted by $\underline{E}G$ ($\underline{\underline{E}}G$) and $\underline{\mathrm{gd}}G$ ($\underline{\underline{\mathrm{gd}}}G$), respectively.

We will now state some known results on the finiteness of $\mathrm{gd}_{\mathcal{F}}(G)$ that will be used throughout this paper, some without explicit referral.

The first result gives an upper bound for $\mathrm{gd}_{\mathcal{F}}(G)$, when $\mathcal{F}$ is either the family of subgroups of finitely generated subgroups  or the family of countable subgroups of $G$.
\begin{theorem} \label{th: fin gen subgroups} Let $G$ be a group of cardinality $\aleph_n$, for some natural number $n$, let $\mathcal{F}$ be the family of subgroups of finitely generated subgroups of $G$ and let $\mathcal{H}$ be the family of countable subgroups of $G$. Then there exists a model for $E_{\mathcal{F}}G$ of dimension $n+1$, and a model for $E_{\mathcal{H}}G$ of dimension $n$.
\end{theorem}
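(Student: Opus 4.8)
The plan is to extract both bounds from a single \emph{continuous chain lemma} together with an induction on $n$. The lemma: if $G=\bigcup_{\alpha<\lambda}G_\alpha$ is the union of a continuous ascending chain of subgroups and $\mathcal{K}$ is a family of subgroups of $G$ all of whose members lie in some $G_\alpha$, then $\mathrm{gd}_{\mathcal{K}}(G)\le 1+\sup_\alpha \mathrm{gd}_{\mathcal{K}\cap G_\alpha}(G_\alpha)$, where $\mathcal{K}\cap G_\alpha=\{H\le G_\alpha : H\in\mathcal{K}\}$. Granting this, I would argue by induction on $n$, applying the lemma to $\mathcal{K}=\mathcal{H}$ and to $\mathcal{K}=\mathcal{F}$. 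For $n=0$ with $\mathcal{K}=\mathcal{H}$: $G$ is countable, so $G\in\mathcal{H}$ and a point is a $0$-dimensional model. For $n=0$ with $\mathcal{K}=\mathcal{F}$: choose a cofinal ascending chain $L_0\le L_1\le\cdots$ of finitely generated subgroups of $G$ (then every finitely generated subgroup, hence every member of $\mathcal{F}$, lies in some $L_m$); since each $L_m$ is finitely generated, $\mathcal{F}\cap L_m$ is the family of \emph{all} subgroups of $L_m$, so $\mathrm{gd}_{\mathcal{F}\cap L_m}(L_m)=0$ and the lemma yields a $1$-dimensional model. For $n\ge1$, write $G=\bigcup_{\alpha<\omega_n}G_\alpha$ as a continuous chain with $|G_\alpha|<\aleph_n$; because $\omega_n$ is regular and uncountable, every countable---a fortiori every finitely generated---subgroup of $G$ sits inside some $G_\alpha$, so the lemma applies, and the induction hypothesis gives $\mathrm{gd}_{\mathcal{H}\cap G_\alpha}(G_\alpha)\le n-1$ and $\mathrm{gd}_{\mathcal{F}\cap G_\alpha}(G_\alpha)\le n$, producing models of dimension $n$ and $n+1$ respectively.

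The lemma I would prove by a mapping-telescope construction. Set $Y_\alpha=G\times_{G_\alpha}E_{\mathcal{K}\cap G_\alpha}(G_\alpha)$, each factor chosen of least dimension. The key observation is that for $\beta\ge\alpha$ the space $\mathrm{res}^{G_\beta}_{G_\alpha}E_{\mathcal{K}\cap G_\beta}(G_\beta)$ is again a model for $E_{\mathcal{K}\cap G_\alpha}(G_\alpha)$, since $\mathcal{K}\cap G_\alpha=(\mathcal{K}\cap G_\beta)\cap G_\alpha$; this gives a compatible system of $G$-maps $Y_\alpha\to Y_\beta$. For a countable chain one forms the mapping telescope $X$ of the $Y_m$, so that $\dim X\le 1+\sup_m\dim E_{\mathcal{K}\cap G_m}(G_m)$. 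For the fixed points: if $H\notin\mathcal{K}$ then, $\mathcal{K}$ being conjugation-closed, each $Y_m^H$ is empty and $X^H=\emptyset$; if $H\in\mathcal{K}$ then $H\le G_m$ for all large $m$, each $Y_m^H$ is a disjoint union of contractible spaces indexed by certain cosets, and the bonding maps eventually carry every component to the component of the identity coset (any fixed $g\in G$ lies in $G_m$ for $m$ large), so $X^H$ is connected with vanishing first homology, hence contractible.

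The hard part is the general, uncountable case of the lemma, where the construction must be run transfinitely. The main obstacle is to keep the dimension from drifting upward at limit ordinals: a naive transfinite mapping cylinder adds one to the dimension of the entire complex at each limit stage. I would instead enlarge the family in small increments using the L\"{u}ck--Weiermann pushout description of how a classifying space changes under passage from a family $\mathcal{K}'$ to a slightly larger one, so that at each ordinal one only attaches cells over the newly captured subgroups; checking that the commensurator-type subgroups appearing in these pushouts contribute nothing beyond the single extra dimension is the technical heart. (An alternative is to run the argument on Bredon cohomology, where the countable step is the algebraic telescope resolution $0\to\bigoplus Y_m\to\bigoplus Y_m\to\underline{\mathbb{Z}}\to0$ and the transfinite step is an Auslander-type estimate for unions of continuous chains, converting back via $\mathrm{gd}_{\mathcal{K}}\le\max\{3,\mathrm{cd}_{\mathcal{K}}\}$ and the coincidence of the two dimensions outside low degrees, with the small cases handled by hand.) For $\mathcal{F}$ there is the additional bookkeeping of identifying $\mathcal{F}\cap G_\alpha$ with the finitely-generated-subgroup family of $G_\alpha$, which I would secure by choosing the chain $\{G_\alpha\}$ with appropriate closure properties.
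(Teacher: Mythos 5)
Your plan is sound in outline, and the countable case is handled essentially correctly: the mapping telescope over an ascending chain of finitely generated (resp.\ countable) subgroups does give the $1$-dimensional (resp.\ $0$-dimensional) models for $n=0$. (One small repair there: ``connected with vanishing first homology, hence contractible'' is not a valid inference; argue instead that each $Y_m^H$ is a disjoint union of contractible pieces, so the telescope of the $Y_m^H$ is homotopy equivalent to the telescope of the sets $\pi_0(Y_m^H)$, which is a tree because $\mathrm{colim}_m\,(G/G_m)^H$ is a single point.) The genuine gap is exactly where you say the ``technical heart'' lies: the continuous-chain lemma for uncountable $\lambda$, which is the entire content of the theorem for $n\geq 1$, is not proved. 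Neither of your two sketches closes it. The L\"uck--Weiermann pushout (Theorem \ref{th: push out}) requires a strong equivalence relation on $\mathcal{H}\smallsetminus\mathcal{F}$ and control of $\mathrm{gd}_{\mathcal{F}[H]}(\mathrm{N}_G[H])$ and $\mathrm{gd}_{\mathcal{F}\cap \mathrm{N}_G[H]}(\mathrm{N}_G[H])$; you neither exhibit such a relation for the ``newly captured'' subgroups at stage $\alpha$ nor estimate the stabilizer families, so the claim that they ``contribute nothing beyond the single extra dimension'' is unsupported. The algebraic alternative cannot, as stated, yield the theorem: converting back through $\mathrm{gd}_{\mathcal{K}}\leq\max\{3,\mathrm{cd}_{\mathcal{K}}\}$ destroys precisely the low-dimensional conclusions the statement asserts (the $1$-dimensional model for $E_{\mathcal{H}}G$ when $|G|=\aleph_1$, the $2$-dimensional models when $n+1=2$ or $n=2$), and ``small cases handled by hand'' is where all the geometric work would have to happen. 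For comparison, the paper does not reprove any of this: it quotes L\"uck--Weiermann \cite[5.31]{LuckWeiermann} verbatim for $\mathcal{F}$ and asserts the adaptation for $\mathcal{H}$, so your proposal is in effect an attempt to reprove that theorem, and the transfinite step is missing.

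A second, independent problem is your identification of $\mathcal{F}\cap G_\alpha$ with the family of subgroups of finitely generated subgroups of $G_\alpha$, needed to invoke the induction hypothesis. This can genuinely fail: take $H=\bigoplus_{\mathbb{Z}}\mathbb{Z}\leq G_\alpha$ with $H$ contained in a $2$-generated subgroup of $G$ isomorphic to $\mathbb{Z}\wr\mathbb{Z}$ that does not lie in $G_\alpha$; then $H\in\mathcal{F}\cap G_\alpha$ but $H$ need not lie in any finitely generated subgroup of $G_\alpha$. Your proposed remedy, ``choosing the chain with appropriate closure properties,'' is not specified and is not obviously attainable: the required property quantifies over all finitely generated subgroups of $G$ (up to $\aleph_n$ of them) and over up to $2^{\aleph_{n-1}}$ subgroups of $G_\alpha$, so a routine Skolem-hull or club argument does not visibly produce a chain of subgroups of cardinality $\aleph_{n-1}$ with this property. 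Either this identification must be established, or the induction hypothesis must be reformulated (for instance, proving the bound for an arbitrary family of subgroups of $G_\alpha$ each contained in a finitely generated subgroup of an ambient group), and as it stands neither is done.
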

\begin{proof} The statement for the family $\mathcal{F}$ is exactly Theorem 5.31 in {L\"{u}ck-Weiermann, \cite[5.31]{LuckWeiermann}}. The proof of this theorem can be easily adapted to deal with the family of countable subgroups $\mathcal{H}$.
\end{proof}

Let $G$ be a group and let $\mathcal{F}$ and $\mathcal{H}$ be two families of subgroups of $G$ such that $\mathcal{F} \subseteq \mathcal{H}$. Next, we will deal with the following questions.
\smallskip
\begin{itemize}
\item[(a)] How can a model for $E_{\mathcal{F}}G$ be obtained from a model for $E_{\mathcal{H}}G$?
\medskip
\item[(b)] How can a model for $E_{\mathcal{H}}G$ be obtained from a model for $E_{\mathcal{F}}G$?
\end{itemize}
\smallskip

For a subgroup $H$ of  $G$, one can define the family
\[ \mathcal{F} \cap H= \{ H \cap F \ | \ F \in \mathcal{F}\} \]
of subgroups of $H$. By restricting the action of $G$ to $H$, any model for $E_{\mathcal{F}}G$ becomes a model  for $E_{\mathcal{F}\cap H}H$. This implies that $\mathrm{gd}_{\mathcal{F} \cap H}(H) \leq \mathrm{gd}_{\mathcal{F}}(G)$.

The following theorem gives us an answer to (a).
\begin{theorem}[{L\"{u}ck-Weiermann, \cite[5.1]{LuckWeiermann}}] \label{th: top spec seq} Let $G$ be a group and let $\mathcal{F}$ and $\mathcal{H}$ be two families of subgroups of $G$ such that $\mathcal{F} \subseteq \mathcal{H}$.
If there is an integer $k \geq 0$ such that for each $H \in \mathcal{H}$, there is a $k$-dimensional model for $E_{\mathcal{F}\cap H}H$. Then there exists a model for $E_{\mathcal{F}}G$ of dimension $k + \mathrm{gd}_{\mathcal{H}}(G)$.
\end{theorem}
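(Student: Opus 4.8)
The plan is to build a model for $E_{\mathcal{F}}G$ by ``thickening'' a model $Y$ for $E_{\mathcal{H}}G$, replacing, over each equivariant cell of $Y$ of type $G/H$, the point by a chosen model for $E_{\mathcal{F}\cap H}H$. Since there is nothing to prove when $\mathrm{gd}_{\mathcal{H}}(G)=\infty$, fix a model $Y$ for $E_{\mathcal{H}}G$ with $\dim Y = d := \mathrm{gd}_{\mathcal{H}}(G)<\infty$, and for each $H\in\mathcal{H}$ fix a model $Z_H$ for $E_{\mathcal{F}\cap H}H$ with $\dim Z_H\le k$ (these exist by hypothesis). I will construct a $G$-CW-complex $X$ of dimension at most $d+k$ together with a $G$-map $f\colon X\to Y$, and then verify that $X$ is a model for $E_{\mathcal{F}}G$.

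The construction is by induction over the skeleta $Y_n$ of $Y$. With $Y_0=\coprod_i G/H_i$, put $X_0 = \coprod_i G\times_{H_i}Z_{H_i}$ and let $f$ collapse each $Z_{H_i}$ to the corresponding point of $Y_0$. Given $f_{n-1}\colon X_{n-1}\to Y_{n-1}$, write $Y_n = Y_{n-1}\cup_{\coprod_i G/H_i\times S^{n-1}}\coprod_i G/H_i\times D^n$, where $\coprod_i G/H_i\times S^{n-1}$ maps to $Y_{n-1}$ by the equivariant attaching maps $\alpha_i$. The key step, discussed below, is to choose $G$-maps $\widetilde{\alpha_i}\colon G\times_{H_i}(Z_{H_i}\times S^{n-1})\to X_{n-1}$ that lift the $\alpha_i$ along $f_{n-1}$ (here $Z_{H_i}\times S^{n-1}\to S^{n-1}$ is the projection). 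Granting these, set $X_n = X_{n-1}\cup_{\coprod_i G\times_{H_i}(Z_{H_i}\times S^{n-1})}\coprod_i G\times_{H_i}(Z_{H_i}\times D^n)$, glued via the $\widetilde{\alpha_i}$, and extend $f$ by the projections on the new cells; finally $X=\bigcup_n X_n$. Every equivariant cell attached at stage $n$ lies in some $G\times_{H_i}(Z_{H_i}\times D^n)$ and so has dimension $\le k+n\le k+d$; hence $\dim X\le d+k$.

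I expect the main obstacle to be the existence of the lifts $\widetilde{\alpha_i}$; the underlying point is that $f_{n-1}\colon X_{n-1}\to Y_{n-1}$ is, fibrewise over $Y_{n-1}$, a classifying space for $\mathcal{F}$. I would isolate this as a lemma: for any subgroup $H\le G$ and any $H$-CW-complex $A$ all of whose isotropy groups lie in $\mathcal{F}$, every $H$-map $A\to Y_{n-1}$ admits a lift through $f_{n-1}$. One proves this by induction over the equivariant cells $H/L\times D^m$ of $A$, where necessarily $L\in\mathcal{F}$: extending a partial lift from $H/L\times S^{m-1}$ to $H/L\times D^m$ reduces, after passing to $L$-fixed points and using contractibility of $D^m$, to extending a map $S^{m-1}\to D^m$ with values in the $L$-fixed point set of a fibre of $f_{n-1}$ over some point of $Y_{n-1}$; by construction that fibre is one of the $Z_{H'}$ with $L\le H'\in\mathcal{H}$, so $L\in\mathcal{F}\cap H'$ and $(Z_{H'})^L$ is contractible, whence the extension exists. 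Applying the lemma with $H=H_i$, $A = Z_{H_i}\times S^{n-1}$ (whose isotropy groups lie in $\mathcal{F}\cap H_i\subseteq\mathcal{F}$) and the composite $H_i$-map $Z_{H_i}\times S^{n-1}\to S^{n-1}\to Y_{n-1}$ given by the projection followed by $x\mapsto\alpha_i(eH_i,x)$, and then inducing up to $G$, yields $\widetilde{\alpha_i}$.

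It remains to verify that $X$ is a model for $E_{\mathcal{F}}G$. Every cell of $X$ has stabiliser subconjugate to a stabiliser of a cell of some $Z_{H_i}$, hence in $\mathcal{F}\cap H_i\subseteq\mathcal{F}$; therefore $X^K=\emptyset$ for $K\notin\mathcal{F}$. For $K\in\mathcal{F}$ I claim $f^K\colon X^K\to Y^K$ is a homotopy equivalence; as $K\in\mathcal{F}\subseteq\mathcal{H}$ gives $Y^K\simeq\ast$, this yields $X^K\simeq\ast$ and finishes the proof. Since $(-)^K$ commutes with the pushouts defining the $X_n$ and $Y_n$, it suffices to show inductively that each $f_n^K$ is a homotopy equivalence. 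On the $K$-fixed points of the $n$-cells glued at stage $n$, the map $f$ identifies $(G\times_{H_i}(Z_{H_i}\times D^n))^K$ with the disjoint union of the $(Z_{H_i})^{g^{-1}Kg}\times D^n$ over cosets $gH_i$ with $g^{-1}Kg\le H_i$, mapping by projection onto $(G\times_{H_i}D^n)^K\cong\coprod D^n$; each $(Z_{H_i})^{g^{-1}Kg}$ is contractible (since $g^{-1}Kg\in\mathcal{F}\cap H_i$), so this is a homotopy equivalence, and likewise with $S^{n-1}$ in place of $D^n$. The gluing lemma for homotopy equivalences along cofibrations of CW-pairs then promotes $f_{n-1}^K$ being an equivalence to $f_n^K$ being an equivalence; passing to the colimit shows $f^K$ is a homotopy equivalence.
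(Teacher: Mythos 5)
The paper offers no proof of this statement (it is quoted from L\"uck--Weiermann), so I judge your proposal on its own merits. Your overall architecture --- replace each equivariant cell $G/H\times D^n$ of a $d$-dimensional model $Y$ for $E_{\mathcal H}G$ by $G\times_{H}(Z_H\times D^n)$, compare with $Y$ via a $G$-map $f$, and check fixed points with the gluing lemma --- is exactly the standard route, and the dimension count, the isotropy check, and the final fixed-point verification are fine \emph{granted} the lifts $\widetilde{\alpha_i}$ with $f_{n-1}\circ\widetilde{\alpha_i}=\alpha_i\circ\mathrm{pr}$. The genuine gap is precisely there: your ``key lemma'' asserts that every equivariant map $A\to Y_{n-1}$ with isotropy in $\mathcal F$ admits a \emph{strict} lift through $f_{n-1}$, and the argument you give for it is not valid. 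In the cell-by-cell extension step the characteristic map of a cell $H/L\times D^m$ of $A$ is a generally non-constant map $D^m\to Y_{n-1}^L$, so the extension problem is a lifting problem against the map $f_{n-1}^L$, not a problem of mapping into ``the $L$-fixed set of a fibre of $f_{n-1}$ over some point''; contractibility of the point-preimages does not give the right lifting property, because $f_{n-1}$ is not a fibration. Indeed strict lifts can fail: if a path in $Y_{n-1}$ crosses transversally a vertex $v$ with large stabilizer, the lift is forced on the two adjacent open cells (where $f$ is a homeomorphism onto its image when the relevant $Z$ is a point), and its one-sided limits at $v$ are the two gluing points in the contractible fibre $f^{-1}(v)$; if these differ, no continuous strict lift exists. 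Since both your definition of $f_n$ (``extend $f$ by the projections on the new cells'') and your identification of $f$ over the new cells in the fixed-point argument use strict compatibility, this gap propagates through the rest of the proof.

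The standard repair, which is also what makes the L\"uck--Weiermann argument work, is to carry along the inductive statement you prove at the end --- that $f_{n-1}^K\colon X_{n-1}^K\to Y_{n-1}^K$ is a (weak) homotopy equivalence for all $K\in\mathcal F$ --- and use it \emph{before} attaching the next cells: by the equivariant Whitehead theorem, such an $f_{n-1}$ induces a bijection $[A,X_{n-1}]_G\to[A,Y_{n-1}]_G$ for every $G$-CW-complex $A$ with isotropy in $\mathcal F$, so one obtains $\widetilde{\alpha_i}$ with $f_{n-1}\circ\widetilde{\alpha_i}$ only $G$-homotopic to $\alpha_i\circ\mathrm{pr}$. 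One then either forms the double mapping cylinder (homotopy pushout) or defines $f_n$ on the new cells using a collar $S^{n-1}\times[0,1]$ to absorb the homotopy and projection on the inner disc; after an equivariant cellular approximation of $\widetilde{\alpha_i}$ the result is again a $G$-CW-complex of the same dimension, and your gluing-lemma verification of $f_n^K$ being an equivalence goes through verbatim for the homotopy-commutative square. So the missing ingredient is the passage from ``fibrewise contractible fixed sets'' to ``lift up to $G$-homotopy via the inductively known $\mathcal F$-equivalence,'' together with the bookkeeping needed because the lifts are not strict.
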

\noindent Note that the theorem implies that $\underline{\mathrm{gd}}(G)\leq \underline{\underline{\mathrm{gd}}}(G)+1$, for any group $G$.

The next corollaries will be  key tools in our future arguments.
\begin{corollary} \label{cor: gd ext bounds} Consider a short exact sequence of groups $N \rightarrow G \xrightarrow{\pi} Q$.
Let $\mathcal{F}$ be a family of subgroups of $G$ and let $\mathcal{H}$ be a family of subgroups of $Q$ such that $\pi(\mathcal{F})\subseteq \mathcal{H}$. If there is an integer $k \geq 0$ such that for each $H \in \mathcal{H}$, there is a $k$-dimensional model for $E_{\mathcal{F}\cap \pi^{-1}(H)}\pi^{-1}(H)$. Then there exists a model for $E_{\mathcal{F}}G$ of dimension $k + \mathrm{gd}_{\mathcal{H}}(Q)$.
\end{corollary}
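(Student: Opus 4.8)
The plan is to interpolate a third family of subgroups of $G$ between $\mathcal{F}$ and the full family of subgroups, and then invoke Theorem \ref{th: top spec seq}. First I would set
\[ \mathcal{H}' = \{ K \leq G \mid \pi(K) \in \mathcal{H} \}, \]
the preimage of $\mathcal{H}$ under $\pi$. Since $\mathcal{H}$ is closed under conjugation and under passage to subgroups and $\pi$ is a surjective homomorphism, $\mathcal{H}'$ is again a family of subgroups of $G$; moreover the hypothesis $\pi(\mathcal{F}) \subseteq \mathcal{H}$ gives $\mathcal{F} \subseteq \mathcal{H}'$.

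Next I would observe that a model for $E_{\mathcal{H}}Q$ inflates along $\pi$ to a model for $E_{\mathcal{H}'}G$: if $X$ is a model for $E_{\mathcal{H}}Q$, regard it as a $G$-CW-complex via $\pi$, and note that for every subgroup $K \leq G$ we have $X^K = X^{\pi(K)}$, which is contractible precisely when $\pi(K) \in \mathcal{H}$, i.e.\ when $K \in \mathcal{H}'$, and is empty otherwise. Hence $\mathrm{gd}_{\mathcal{H}'}(G) \leq \mathrm{gd}_{\mathcal{H}}(Q)$.

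Then I would apply Theorem \ref{th: top spec seq} to the pair $\mathcal{F} \subseteq \mathcal{H}'$. This requires producing, for each $K \in \mathcal{H}'$, a $k$-dimensional model for $E_{\mathcal{F} \cap K}K$. Given such a $K$, put $H = \pi(K) \in \mathcal{H}$, so that $K \leq \pi^{-1}(H)$; by hypothesis there is a $k$-dimensional model $Y$ for $E_{\mathcal{F} \cap \pi^{-1}(H)}\pi^{-1}(H)$, and restricting its action to $K$ yields a model for $E_{(\mathcal{F} \cap \pi^{-1}(H)) \cap K}K$. Since $K \leq \pi^{-1}(H)$, one checks directly from the definitions that $(\mathcal{F} \cap \pi^{-1}(H)) \cap K = \mathcal{F} \cap K$, so $\mathrm{gd}_{\mathcal{F}\cap K}(K) \leq k$ for every $K \in \mathcal{H}'$. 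Theorem \ref{th: top spec seq} then produces a model for $E_{\mathcal{F}}G$ of dimension $k + \mathrm{gd}_{\mathcal{H}'}(G) \leq k + \mathrm{gd}_{\mathcal{H}}(Q)$, as desired. There is no genuine obstacle in this argument; the only points requiring care are the verification that $\mathcal{H}'$ is a family and that the intersection identity above holds — both immediate — together with the bookkeeping that threads the bound $\mathrm{gd}_{\mathcal{H}'}(G) \leq \mathrm{gd}_{\mathcal{H}}(Q)$ into the conclusion of Theorem \ref{th: top spec seq}.
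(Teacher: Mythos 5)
Your proposal is correct and is essentially the paper's own argument: your family $\mathcal{H}'=\{K\leq G \mid \pi(K)\in\mathcal{H}\}$ coincides (because $\mathcal{H}$ is subgroup-closed) with the family $\mathcal{C}=\{K\leq G \mid K\subseteq \pi^{-1}(F) \mbox{ for some } F\in\mathcal{H}\}$ used in the paper, and both proofs inflate a model for $E_{\mathcal{H}}Q$ along $\pi$ and then apply Theorem \ref{th: top spec seq}. Your extra verifications (that restriction to $K\leq\pi^{-1}(H)$ gives the needed $k$-dimensional models and that $(\mathcal{F}\cap\pi^{-1}(H))\cap K=\mathcal{F}\cap K$) simply make explicit what the paper leaves implicit.
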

\begin{proof}
Define the family
\[ \mathcal{C}=\{ K \subseteq G \ | \ K \subseteq \pi^{-1}(F) \ \mbox{for some} \ F \in \mathcal{H}\} \]
of subgroups of $G$. Note that $\mathcal{F} \subseteq \mathcal{C}$. Let $X$ be a model for $E_{\mathcal{H}}Q$. By letting $G$ act on $X$ via $\pi$, it follows that $X$ is also a model for $E_{\mathcal{C}}G$. This implies that $\mathrm{gd}_{\mathcal{C}}(G) \leq \mathrm{gd}_{\mathcal{H}}(Q)$. The statement now follows from Theorem \ref{th: top spec seq}.
\end{proof}
\begin{corollary}\label{cor: useful combination} Let $G$ be a group of cardinality $\aleph_n$ and let $\mathcal{F}$ be a family of subgroups.

\begin{itemize}
\item[(i)] If there is an integer $d \geq 0$ such that for every finitely generated subgroup $H$ of $G$, there is a $d$-dimensional model for $E_{\mathcal{F}\cap H}H$. Then there exists a model for $E_{\mathcal{F}}G$ of dimension $d+n+1$.
\medskip
\item[(ii)] If there is an integer $d \geq 0$ such that for every countable subgroup $H$ of $G$, there is a $d$-dimensional model for $E_{\mathcal{F}\cap H}H$. Then there exists a model for $E_{\mathcal{F}}G$ of dimension $d+n$.
\end{itemize}
\end{corollary}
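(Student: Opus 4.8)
The plan is to obtain both statements by combining the two preceding results: Theorem \ref{th: fin gen subgroups}, which controls $\mathrm{gd}$ of $G$ for the family of subgroups of finitely generated subgroups and for the family of countable subgroups, and Theorem \ref{th: top spec seq}, which assembles $d$-dimensional models for $E_{\mathcal{F}\cap H}H$, ranging over all $H$ in a larger family $\mathcal{H}$, into a model for $E_{\mathcal{F}}G$ of dimension $d+\mathrm{gd}_{\mathcal{H}}(G)$. Thus in both parts it suffices to verify the hypothesis of Theorem \ref{th: top spec seq} for the appropriate choice of $\mathcal{H}$; note that in our applications the members of $\mathcal{F}$ are finitely generated, so the required inclusion $\mathcal{F}\subseteq\mathcal{H}$ holds automatically in either case.

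For part (i) I would take $\mathcal{H}=\mathcal{G}$, the family of subgroups of finitely generated subgroups of $G$, so that $\mathrm{gd}_{\mathcal{G}}(G)\leq n+1$ by Theorem \ref{th: fin gen subgroups}. The only point requiring an argument is that each $H\in\mathcal{G}$ admits a $d$-dimensional model for $E_{\mathcal{F}\cap H}H$; this is slightly more than the hypothesis gives, since $H$ need not itself be finitely generated, only contained in some finitely generated $K\leq G$. Here one applies the hypothesis to $K$ to get a $d$-dimensional model $Y$ for $E_{\mathcal{F}\cap K}K$, restricts the $K$-action to $H$, and uses $H\leq K$ to identify $(\mathcal{F}\cap K)\cap H$ with $\mathcal{F}\cap H$; hence $Y$ is a $d$-dimensional model for $E_{\mathcal{F}\cap H}H$. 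Theorem \ref{th: top spec seq} then gives a model for $E_{\mathcal{F}}G$ of dimension $d+\mathrm{gd}_{\mathcal{G}}(G)\leq d+n+1$.

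Part (ii) is the same strategy with $\mathcal{H}$ the family of countable subgroups of $G$, for which $\mathrm{gd}_{\mathcal{H}}(G)\leq n$ by Theorem \ref{th: fin gen subgroups}; and this time no restriction step is needed, because the hypothesis of (ii) is already stated for every countable $H$ and so directly provides the $d$-dimensional models for $E_{\mathcal{F}\cap H}H$, $H\in\mathcal{H}$. Applying Theorem \ref{th: top spec seq} yields a model for $E_{\mathcal{F}}G$ of dimension $d+n$. The only place where one has to be a little careful --- and the closest thing to an obstacle --- is the bookkeeping with families under restriction (the identity $(\mathcal{F}\cap K)\cap H=\mathcal{F}\cap H$ for $H\leq K$) and the inclusions $\mathcal{F}\subseteq\mathcal{G}\subseteq\mathcal{H}$; there is no geometric or homological content beyond what is already packaged in Theorems \ref{th: fin gen subgroups} and \ref{th: top spec seq}.
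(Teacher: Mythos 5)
Your proposal is correct and follows essentially the same route as the paper, whose entire proof is the one-line combination of Theorem \ref{th: fin gen subgroups} and Theorem \ref{th: top spec seq}; the restriction step you spell out (passing from finitely generated subgroups of $G$ to arbitrary subgroups of finitely generated subgroups) is exactly the detail the paper leaves implicit. The only caveat, the inclusion $\mathcal{F}\subseteq\mathcal{H}$ required by Theorem \ref{th: top spec seq}, which you dispose of by appealing to the members of $\mathcal{F}$ being finitely generated in the intended applications, is glossed over by the paper in just the same way, so it does not separate your argument from theirs.
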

\begin{proof} This follows from combining Theorem \ref{th: fin gen subgroups} and Theorem \ref{th: top spec seq}.
\end{proof}
One way of dealing with the question (b) is  to start with a model for $E_{\mathcal{F}}G$ and then try to adapt this model to obtain a model for $E_{\mathcal{H}}G$. In section $2$ of \cite{LuckWeiermann}, L\"{u}ck and Weiermann give a general construction that follows this principle. We recall some of the basics of this construction that will be useful in the subsequent sections.

 Let $G$ be a discrete group and let $\mathcal{F}$ and $\mathcal{H}$ be families of subgroups of $G$ such that $\mathcal{F} \subseteq \mathcal{H}$ and such that there exists an equivalence relation $\sim$ on the set $\mathcal{S}=\mathcal{H}\smallsetminus \mathcal{F}$ that satisfies the following properties
\smallskip
\begin{itemize}
\item[-]  $\forall H,K \in \mathcal{S} : H \subseteq K \Rightarrow H \sim K$;
\medskip
\item[-] $ \forall H,K \in \mathcal{S},\forall x \in G: H \sim K \Leftrightarrow H^x \sim K^x$.
\end{itemize}
\smallskip
An equivalence relation that satisfies these properties is called a \emph{strong equivalence relation}.
Let $[H]$ be an equivalence class represented by $H \in \mathcal{S}$ and denote the set of equivalence classes by $[\mathcal{S}]$. The group $G$ acts on $[\mathcal{S}]$ via conjugation, and the stabilizer group of an equivalence class $[H]$ is
\begin{equation}\label{eq gen normalizer} \mathrm{N}_{G}[H]=\{x \in \Gamma \ | \ H^x \sim H \}. \end{equation}
Note that $\mathrm{N}_{G}[H]$ contains $H$ as a subgroup.
Define for each $[H] \in \mathcal{I}$ the  family
\begin{equation} \label{eq: special family}\mathcal{F}[H]=\{ K \subset \mathrm{N}_{G}[H] \ | K \in \mathcal{S}, K \sim H\} \cup \Big(\mathrm{N}_{G}[H] \cap \mathcal{F}\Big) \end{equation} of subgroups of $\mathrm{N}_{G}[H]$.
\begin{theorem}[{L\"{u}ck-Weiermann, \cite[2.5]{LuckWeiermann}}] \label{th: push out} Let $\mathcal{F} \subseteq \mathcal{H}$ be two families of subgroups of a group $G$ such that $S=\mathcal{H}\smallsetminus \mathcal{F}$ is equipped with a strong equivalence relation. Denote the set of equivalence classes by $[\mathcal{S}]$ and let $\mathcal{I}$ be a complete set of representatives $[H]$ of the orbits of the conjugation action of $G$ on $[\mathcal{S}]$. If there exists a natural number $k$ such that  $\mathrm{gd}_{\mathcal{F}\cap \mathrm{N}_{G}[H]}(\mathrm{N}_{G}[H]) \leq k-1$ and
$\mathrm{gd}_{\mathcal{F}[H]}(\mathrm{N}_{G}[H]) \leq k$ for each $[H] \in \mathcal{I}$,
and such that $\mathrm{gd}_{\mathcal{F}}(G) \leq k$, then $\mathrm{gd}_{\mathcal{H}}(G) \leq k$.
\end{theorem}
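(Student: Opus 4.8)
The plan is to construct a model for $E_{\mathcal{H}}G$ by hand: starting from a model for $E_{\mathcal{F}}G$, one attaches, one class $[H]\in\mathcal{I}$ at a time, a cone that repairs exactly the fixed-point information distinguishing $\mathcal{H}$ from $\mathcal{F}$ near $H$, and then reads off the dimension. Concretely, this amounts to realising $E_{\mathcal{H}}G$ as a homotopy push-out (double mapping cylinder) and verifying its fixed-point sets directly against the definition of a classifying space.

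\emph{Setting up the homotopy push-out.} Fix $[H]\in\mathcal{I}$ and put $N=\mathrm{N}_{G}[H]$. One first checks that $\mathcal{F}\cap N$ and $\mathcal{F}[H]$ are genuine families of subgroups of $N$ with $\mathcal{F}\cap N\subseteq\mathcal{F}[H]$: closure of $\mathcal{F}[H]$ under conjugation by $N$ uses that $H^{n}\sim H$ for every $n\in N$, and closure under passage to subgroups uses the axiom $H'\subseteq H\Rightarrow H'\sim H$ of a strong equivalence relation. Since a subgroup of $N$ lies in $\mathcal{F}$ exactly when it lies in $\mathcal{F}\cap N$, restricting a model for $E_{\mathcal{F}}G$ along $N\hookrightarrow G$ gives a model for $E_{\mathcal{F}\cap N}N$; the universal property then yields an $N$-map $E_{\mathcal{F}\cap N}N\to\mathrm{res}_{N}E_{\mathcal{F}}G$, hence (after inducing up and applying equivariant cellular approximation) a cellular $G$-map $G\times_{N}E_{\mathcal{F}\cap N}N\to E_{\mathcal{F}}G$; likewise $\mathcal{F}\cap N\subseteq\mathcal{F}[H]$ gives a cellular $N$-map $E_{\mathcal{F}\cap N}N\to E_{\mathcal{F}[H]}N$. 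Taking disjoint unions over $[H]\in\mathcal{I}$ and inducing up produces cellular $G$-maps $f\colon A\to E_{\mathcal{F}}G$ and $j\colon A\to B$, where $A=\coprod_{[H]\in\mathcal{I}}G\times_{N}E_{\mathcal{F}\cap N}N$ and $B=\coprod_{[H]\in\mathcal{I}}G\times_{N}E_{\mathcal{F}[H]}N$ (here $N=\mathrm{N}_{G}[H]$ depends on the summand). Let $X$ be the double mapping cylinder of $B\xleftarrow{\,j\,}A\xrightarrow{\,f\,}E_{\mathcal{F}}G$; it is a $G$-CW-complex of dimension at most $\max\{\dim B,\ \dim A+1,\ \dim E_{\mathcal{F}}G\}$.

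\emph{Checking the fixed points.} It suffices to show $X^{L}=\emptyset$ when $L\notin\mathcal{H}$ and $X^{L}$ is contractible when $L\in\mathcal{H}$. Since forming mapping cylinders commutes with passing to $L$-fixed points, $X^{L}$ is the double mapping cylinder of $B^{L}\xleftarrow{\,j^{L}\,}A^{L}\xrightarrow{\,f^{L}\,}(E_{\mathcal{F}}G)^{L}$, and I use the standard identification $(G\times_{N}Y)^{L}\cong\coprod_{gN\in(G/N)^{L}}Y^{g^{-1}Lg}$, where $(G/N)^{L}=\{gN\mid g^{-1}Lg\le N\}$, together with the homotopy invariance of the homotopy push-out. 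If $L\notin\mathcal{H}$, then $(E_{\mathcal{F}}G)^{L}=\emptyset$ and every component of $A^{L}$ and of $B^{L}$ is empty (a nonempty one would place a conjugate of $L$ in $\mathcal{F}$, respectively in $\mathcal{F}[H]\subseteq\mathcal{H}$), so $X^{L}=\emptyset$. If $L\in\mathcal{F}$, then any conjugate of $L$ contained in some $N$ already lies in $\mathcal{F}\cap N$ and in $\mathcal{F}[H]$, so $A^{L}\to B^{L}$ matches index sets and on each component is a map of contractible spaces; hence it is a homotopy equivalence and $X^{L}\simeq(E_{\mathcal{F}}G)^{L}\simeq\ast$. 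If $L\in\mathcal{S}=\mathcal{H}\smallsetminus\mathcal{F}$, then $(E_{\mathcal{F}}G)^{L}=\emptyset$ and $A^{L}=\emptyset$, so $X^{L}=B^{L}$; a nonempty component over $[H]$ requires $g^{-1}Lg\sim H$ for some $g$ with $g^{-1}Lg\le N$ (as $g^{-1}Lg\notin\mathcal{F}$), which forces $[L]$ to be $G$-conjugate to $[H]$, so exactly one $[H]\in\mathcal{I}$ contributes. Replacing $L$ by a suitable conjugate (which changes $X^{L}$ only through the $G$-action), we may assume $L\le N=\mathrm{N}_{G}[H]$ and $L\sim H$; the cosets $gN$ with $g^{-1}Lg\le N$ and $g^{-1}Lg\sim H$ are then exactly the coset $N$, since $g^{-1}Lg\sim g^{-1}Hg$ by the conjugation axiom, so $g^{-1}Lg\sim H$ gives $g^{-1}Hg\sim H$, i.e.\ $g\in\mathrm{N}_{G}[H]=N$. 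Hence $X^{L}=(E_{\mathcal{F}[H]}N)^{L}$, which is contractible because $L\in\mathcal{F}[H]$.

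\emph{Dimension and the main obstacle.} Choosing each of $E_{\mathcal{F}}G$, $E_{\mathcal{F}\cap\mathrm{N}_{G}[H]}\mathrm{N}_{G}[H]$, $E_{\mathcal{F}[H]}\mathrm{N}_{G}[H]$ to realise the relevant geometric dimension, the hypotheses $\mathrm{gd}_{\mathcal{F}\cap\mathrm{N}_{G}[H]}(\mathrm{N}_{G}[H])\le k-1$, $\mathrm{gd}_{\mathcal{F}[H]}(\mathrm{N}_{G}[H])\le k$, and $\mathrm{gd}_{\mathcal{F}}(G)\le k$ give $\dim A\le k-1$, $\dim B\le k$, $\dim E_{\mathcal{F}}G\le k$, so $\dim X\le k$ and therefore $\mathrm{gd}_{\mathcal{H}}(G)\le k$. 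The technical heart of the argument is the fixed-point analysis: for $L\in\mathcal{F}$, although the corrective spaces $A$ and $B$ have nonempty $L$-fixed points, one must check that $j$ is a fixed-point homotopy equivalence so the homotopy type is undisturbed; for $L\in\mathcal{S}$, one must isolate the single contributing coset, which is exactly where \emph{both} axioms of a strong equivalence relation — the subgroup axiom and the conjugation-equivariance axiom — are genuinely used. The remaining inputs (that fixed points commute with mapping cylinders, and that the homotopy push-out is homotopy invariant) are formal.
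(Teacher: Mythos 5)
Your construction is correct and is essentially the proof this statement rests on: the paper quotes the result from L\"uck--Weiermann, and their argument is exactly your equivariant homotopy push-out (double mapping cylinder) of $\coprod_{[H]\in\mathcal{I}} G\times_{\mathrm{N}_{G}[H]} E_{\mathcal{F}\cap \mathrm{N}_{G}[H]}\mathrm{N}_{G}[H]$ mapping to $E_{\mathcal{F}}G$ and to $\coprod_{[H]\in\mathcal{I}} G\times_{\mathrm{N}_{G}[H]} E_{\mathcal{F}[H]}\mathrm{N}_{G}[H]$, followed by the same fixed-point verification and dimension count. Your treatment of the key points (cellular approximation, the gluing argument for $L\in\mathcal{F}$, and isolating the single contributing coset for $L\in\mathcal{S}$ via both axioms of the strong equivalence relation) matches the cited proof, so there is nothing to add.
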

L\"{u}ck and Weiermann applied this construction to  prove the following result which will be useful for our purposes.
\begin{theorem}[{L\"{u}ck, \cite[5.26]{Luck2}} and {L\"{u}ck-Weiermann, \cite[5.13]{LuckWeiermann}}] Let $\Gamma$ be a virtually polycyclic group of Hirsch length $h$, then
$\underline{\mathrm{gd}}(\Gamma)=h$ and $\underline{\underline{\mathrm{gd}}}(\Gamma)\leq h+1$.
\end{theorem}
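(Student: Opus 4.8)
The plan is to prove the two assertions separately: the equality $\underline{\mathrm{gd}}(\Gamma)=h$ is essentially classical, and $\underline{\underline{\mathrm{gd}}}(\Gamma)\le h+1$ will be deduced from it via the L\"uck--Weiermann push-out of Theorem~\ref{th: push out}. For the first assertion, the lower bound $\underline{\mathrm{gd}}(\Gamma)\ge\underline{\mathrm{cd}}(\Gamma)\ge\mathrm{vcd}(\Gamma)=h$ holds because a finite-index poly-$\Z$ subgroup $\Gamma_0\le\Gamma$ satisfies $\mathrm{cd}(\Gamma_0)=h(\Gamma_0)=h$, so that restricting a minimal model for $\underline{E}\Gamma$ to $\Gamma_0$ yields a model for $E\Gamma_0$ of dimension $\le\underline{\mathrm{gd}}(\Gamma)$. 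The matching upper bound $\underline{\mathrm{gd}}(\Gamma)\le h$ is classical: one may take the cocompact model for $\underline{E}\Gamma$ afforded by the proper action of $\Gamma$ on $\mathbb{R}^h$, or argue by induction on $h$ using Corollary~\ref{cor: gd ext bounds} and the normal series of a finite-index poly-$\Z$ subgroup. Hence $\underline{\mathrm{gd}}(\Gamma)=h$.

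For the second assertion I would apply Theorem~\ref{th: push out} with $G=\Gamma$, with $\mathcal{F}$ the family of finite subgroups, $\mathcal{H}$ the family of virtually cyclic subgroups, $k=h+1$, and with the strong equivalence relation on $\mathcal{S}=\mathcal{H}\smallsetminus\mathcal{F}$ given by commensurability of infinite virtually cyclic subgroups; that commensurability is a strong equivalence relation is immediate, since an inclusion of infinite virtually cyclic groups is automatically of finite index. Then $\mathrm{N}_{\Gamma}[H]$ is the commensurator $N:=\mathrm{comm}_{\Gamma}(H)$, which is again virtually polycyclic with $h(N)\le h$, so by the first assertion $\mathrm{gd}_{\mathcal{F}\cap N}(N)=\underline{\mathrm{gd}}(N)=h(N)\le h=k-1$ and $\mathrm{gd}_{\mathcal{F}}(\Gamma)=\underline{\mathrm{gd}}(\Gamma)=h\le k$; two of the three hypotheses of Theorem~\ref{th: push out} are thus already in hand, and it remains to bound $\mathrm{gd}_{\mathcal{F}[H]}(N)$ by $k$. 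The key structural input is that $N$ contains a \emph{normal} subgroup $V$ which is infinite virtually cyclic, commensurable with $H$, and with $N/V$ virtually polycyclic (necessarily of Hirsch length $h(N)-1$, since Hirsch length is additive) --- a standard fact about commensurators of infinite cyclic subgroups of polycyclic groups, obtained by fixing a characteristic $\Z\le H$ of finite index and analysing the conjugation action of $N$ on the rational span of this $\Z$ (the homomorphism $N\to\mathbb{Q}^{\times}$ recording the scaling factor), cf.\ \cite{LuckWeiermann}. Granting this, one checks that $\mathcal{F}[H]$ equals $\{L\le N\mid \bar\pi(L)\text{ is finite}\}$, where $\bar\pi\colon N\to N/V$: a finite subgroup, or one commensurable with $H\sim V$, meets $V$ in a finite-index subgroup and hence has finite image, while conversely a subgroup with finite image meets $V$ in a finite-index subgroup, so is virtually cyclic and therefore either finite or (if infinite) commensurable with $V\sim H$. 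Now Corollary~\ref{cor: gd ext bounds}, applied to $1\to V\to N\xrightarrow{\bar\pi}N/V\to 1$ with $\mathcal{F}[H]$ on $N$ and the family of finite subgroups on $N/V$, reduces the question to the observation that for finite $F\le N/V$ the preimage $\bar\pi^{-1}(F)$ carries the family of \emph{all} its subgroups, for which a point is a model; hence $\mathrm{gd}_{\mathcal{F}[H]}(N)\le 0+\underline{\mathrm{gd}}(N/V)=h(N)-1\le h-1\le k$. With all three hypotheses verified, Theorem~\ref{th: push out} gives $\underline{\underline{\mathrm{gd}}}(\Gamma)=\mathrm{gd}_{\mathcal{H}}(\Gamma)\le h+1$.

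The main obstacle is precisely this structural lemma: locating inside the commensurator $N=\mathrm{comm}_\Gamma(H)$ a genuinely normal infinite virtually cyclic subgroup $V$ commensurable with $H$ (normality in $N$ itself, not merely in a finite-index subgroup, is what makes the clean application of Corollary~\ref{cor: gd ext bounds} possible); note that one cannot in general route around this by finding a normal subgroup of $\Gamma$ with infinite virtually cyclic quotient, since such a subgroup need not exist. Everything else is bookkeeping with families and the extension corollaries, although one should separately dispatch the degenerate cases $h=0,1$ and the case $\mathcal{S}=\varnothing$, which present no difficulty.
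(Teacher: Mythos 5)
The paper gives no internal proof of this statement---it is quoted with citations to L\"uck \cite[5.26]{Luck2} and L\"uck--Weiermann \cite[5.13]{LuckWeiermann}---and your argument is correct and essentially reproduces that standard proof as well as the machinery the paper itself develops later: commensurability as the strong equivalence relation, the existence of an infinite cyclic subgroup commensurable with $H$ and normal in $\mathrm{Comm}_{\Gamma}[H]$ yielding $\mathrm{gd}_{\mathcal{F}[H]}(\mathrm{Comm}_{\Gamma}[H])\leq \underline{\mathrm{gd}}(\mathrm{Comm}_{\Gamma}[H]/V)\leq h-1$ (this is precisely the content of Lemma \ref{lemma: virt poly}), and then Theorem \ref{th: push out} with $k=h+1$. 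The only caveat is that you take the classical equality $\underline{\mathrm{gd}}(\Gamma)=h$ largely on faith (reasonable, since it is part of the cited result), and your alternative induction sketch for the upper bound would need subgroups normal in $\Gamma$ itself (e.g.\ a characteristic free abelian subgroup coming from the Fitting subgroup, plus the Bieberbach-type input for virtually abelian groups), not merely the normal series of a finite-index poly-$\Z$ subgroup, before Corollary \ref{cor: gd ext bounds} can be applied.
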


An important algebraic tool to study finiteness properties of classifying spaces for families of subgroups is Bredon cohomology.
Bredon cohomology was introduced by Bredon in \cite{Bredon} for finite groups and was generalized to arbitrary groups by L\"{u}ck (see \cite{Luck}).

Let $G$ be a discrete group and let $\mathcal{F}$ be a family of subgroups of $G$. The \emph{orbit category} $\orb$ is a category defined by the objects that are the left cosets $G/H$ for all $H \in \mathcal{F}$ and the morphisms are all $G$-equivariant maps between the objects.
An \emph{$\orb$-module} is a contravariant functor $M: \orb \rightarrow \mathbb{Z}\mbox{-mod}$. The \emph{category of $\orb$-modules} is denoted by $\orbmod$ and is defined by the objects that are all the $\orb$-modules and the morphisms are all the natural transformations between the objects.
The category $\orbmod$ contains enough projective and injective objects to construct projective and injective resolutions, so one can construct bi-functors $\mathrm{Ext}^{n}_{\orb}(-,-)$ that have all the usual properties. The \emph{$n$-th Bredon cohomology of $G$} with coefficients $M \in \orbmod$ is by definition
\[ \mathrm{H}^n_{\mathcal{F}}(G,M)= \mathrm{Ext}^{n}_{\orb}(\underline{\mathbb{Z}},M), \]
where $\underline{\mathbb{Z}}$ is the constant functor. There is also a notion of \emph{Bredon cohomological dimension} of $G$ for the family $\mathcal{F}$, denoted by $\mathrm{cd}_{\mathcal{F}}(G)$ and defined by
\[ \mathrm{cd}_{\mathcal{F}}(G) = \sup\{ n \in \mathbb{N} \ | \ \exists M \in \orbmod :  \mathrm{H}^n_{\mathcal{F}}(G,M)\neq 0 \}. \]
When $\mathcal{F}$ is the family of finite (virtually cyclic) subgroups, then $\mathrm{cd}_{\mathcal{F}}G$ is denoted by $\underline{\mathrm{cd}}G$ ($\underline{\underline{\mathrm{cd}}}G$).
Since the augmented cellular chain complex of any model for $E_{\mF}G$ yields a projective resolution of $\underline{\mathbb{Z}}$ that can be used to compute $\mathrm{H}_{\mF}^{\ast}(G,-)$, it follows that $ \mathrm{cd}_{\mathcal{F}}(G) \leq  \mathrm{gd}_{\mathcal{F}}(G)$. In fact, in \cite[0.1]{LuckMeintrup}, L\"{u}ck and Meintrup even showed that  \[\mathrm{cd}_{\mathcal{F}}(G) \leq  \mathrm{gd}_{\mathcal{F}}(G) \leq \max\{3, \mathrm{cd}_{\mathcal{F}}(G) \}.\]
Also note that for any group $G$, one has $\mathrm{cd}_{\mathbb{Q}}(G) \leq \underline{\mathrm{cd}}(G)$. 

In the last section, we will briefly discuss some of the other technical aspects of Bredon cohomology. We will also recall the definition of Bredon homology and Bredon homological dimension, as we will need these notions in Section 6. We refer the reader to \cite{Luck} and \cite{FluchThesis} for a detailed introduction to Bredon (co)homology.
\begin{notation} \rm
\begin{itemize} \item[]
\item[-] If $a_1,a_2,\ldots,a_n$ are elements of some group $G$, then $\prod_{i=1}^na_i$ denotes the product $a_1a_2\ldots a_n$ in that specific order.
\smallskip
\item[-] By $\mathbb{N}_1$ we denote the set of natural numbers without the number zero.
\smallskip
\item[-] If $t$ is an element of some group $G$, then $\langle t \rangle$ is the cyclic subgroup of $G$ generated by $t$. All infinite cyclic groups will be written multiplicatively with unit $1$. The unit element of a group that is not infinite cyclic will be denoted by $e$.
\smallskip
\item[-] If $G$ is a group and $C=\langle t \rangle$ is an infinite cyclic group generated by $t$, we say that a semi-direct product $G\rtimes C$ is determined by $\varphi \in \mathrm{Aut}(G)$ when
\[ (e,t)(g,1)(e,t^{-1})=(\varphi(g),1) \in G \rtimes C\]
for all $g \in G$ and denote this group by $G\rtimes_{\varphi} C$.
\smallskip
\item[-] For any $d\in \mathbb{N}_1$, we will denote by $C_d$ the cyclic group of order $d$.
\smallskip
\item[-] Unless stated otherwise, $\mathcal{F}$ will always denote the family of finite subgroups of a given group.
\end{itemize}
\end{notation}

\section{Commensurators of infinite virtually cyclic groups}
In this section, we apply the construction of L\"{u}ck and Weiermann when $\mathcal F$  and $\mathcal H$ are the families of finite and virtually cyclic subgroups of $G$, respectively. This naturally leads us to consider commensurators of infinite virtually cyclic subgroups of $G$. We derive some general facts about such commensurators that will be essential to the proof of Theorem A. 

The commensurator of a subgroup $H$ of a group $G$ is by definition the group
\[ \mathrm{Comm}_{G}[H]=\{x \in G \ | \ [H: H \cap H^{x}]<\infty \ \mbox{and} \ [H^x: H \cap H^{x}]< \infty \}. \]
When $H$ is an infinite virtually cyclic subgroup of $G$ then one can verify that
\[ \mathrm{Comm}_{G}[H]=\{x \in G \ | \ |H \cap H^x | = \infty \}. \]

Let us denote the set of infinite virtually cyclic subgroups of $G$ by $\mathcal{S}$. As in definition $2.2.$ of \cite{LuckWeiermann}, two infinite virtually cyclic subgroups $H$ and $K$ of $\Gamma$ are said to be \emph{equivalent}, denoted $H \sim K$, if $|H\cap K|=\infty$. Using the fact that any two infinite virtually cyclic subgroups of a virtually cyclic group are equivalent (e.g. see Lemma 3.1. in \cite{DP}), it is easily seen that this indeed defines an equivalence relation on $\mathcal{S}$. One can also verify that this equivalence relation is a strong equivalence relation. Note that, in this context, the group defined in (\ref{eq gen normalizer}) is exactly the commensurator of $H$ in the group $G$. Moreover, the family $\mathcal{F}[H]$ defined in (\ref{eq: special family}) consists of all finite subgroups of $\mathrm{Comm}_{G}[H]$ and all infinite virtually cyclic subgroups of $\mathrm{Comm}_{G}[H]$ that are equivalent to $H$. From Theorem \ref{th: push out}, it now follows that in order to determine an upper bound for $\underline{\underline{\mathrm{gd}}}(G)$, we need find an upper bound for $\mathrm{gd}_{\mathcal{F}[H]}(\mathrm{Comm}_{G}[H])$. This calls for a more careful study of the group $\mathrm{Comm}_{G}[H]$, which is the purpose of this section.

We begin by pointing out some basic properties of commensurators:
\begin{itemize}
\item[(i)] If $H$ and $K$ are two equivalent infinite virtually cyclic subgroups of a group $G$, then their commensurators coincide. In particular, the commensurator of an infinite virtually cyclic subgroup $H$ of $G$ is the commensurator of any infinite cyclic subgroup of $H$.
\item[(ii)]  If $G$ is a subgroup of $\Gamma$ such that $H \subseteq G$, then $\mathrm{Comm}_{G}[H]= \mathrm{Comm}_{\Gamma}[H]\cap G$. Also, if $\pi: G \rightarrow Q$ is a group homomorphism such that $|\pi(H)|=\infty$, then $\pi(\mathrm{Comm}_{G}[H])\subseteq \mathrm{Comm}_{Q}[\pi(H)]$.
\end{itemize}

In the next results, we investigate the structure of commensurators of infinite virtually cyclic groups inside semi-direct products $N\rtimes \mathbb{Z}$.
\begin{lemma} \label{lemma: key lemma}Let $N$ be a group and consider $\Gamma=N \rtimes \mathbb{Z}$. Let $H$ be an infinite virtually cyclic subgroup of
$\Gamma$ that is not contained in $N$. Then there is a semi-direct product
$\mathrm{Comm}_{\Gamma}[H]=N_0 \rtimes_{\varphi} \langle t \rangle$, with $N_0$
a subgroup of $N$. Moreover, there exists an element $a \in N_0$ and an integer $n \in \mathbb{N}_1$ such that $\langle(a,t^n) \rangle$ is an infinite cyclic subgroup of $H$ and
\smallskip
\begin{itemize}
\item[(i)] $\varphi(a)=a$;
\medskip
\item[(ii)] for each $x \in N_0$ there exists $m \in \mathbb{N}_1$ such that $\varphi^{nm}(x)=a^{-m}xa^m$.
\end{itemize}
\end{lemma}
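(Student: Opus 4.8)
The plan is to first understand the projection $\pi\colon \Gamma=N\rtimes\mathbb{Z}\to\mathbb{Z}$ restricted to the commensurator, and then to pin down a convenient generator. Since $H$ is not contained in $N=\ker\pi$, the image $\pi(H)$ is a nontrivial, hence infinite, subgroup of $\mathbb{Z}$. By basic property (ii) of commensurators, $\pi(\mathrm{Comm}_{\Gamma}[H])\subseteq\mathrm{Comm}_{\mathbb{Z}}[\pi(H)]=\mathbb{Z}$, so $\pi(\mathrm{Comm}_{\Gamma}[H])$ is a subgroup of $\mathbb{Z}$ containing the nontrivial group $\pi(H)$; fix a generator $t$ of the infinite cyclic group $\mathbb{Z}$ so that, after identifying $\mathbb{Z}=\langle t\rangle$, this image is $\langle t^{k}\rangle$ for some $k\geq 1$. (In fact one shows the image is all of $\langle t\rangle$ by choosing $t$ to generate $\pi(\mathrm{Comm}_{\Gamma}[H])$ itself, which is legitimate since $\pi(H)$ already forces it infinite; I will set $N_0=N\cap\mathrm{Comm}_{\Gamma}[H]$.) Then $N_0\trianglelefteq\mathrm{Comm}_{\Gamma}[H]$ with quotient $\langle t\rangle$, and picking any preimage of $t$ in $\mathrm{Comm}_{\Gamma}[H]$ — which we may write as $(b,t)$ with $b\in N$ — and replacing $t$ by this element shows the extension splits: $\mathrm{Comm}_{\Gamma}[H]=N_0\rtimes_{\varphi}\langle t\rangle$, where $\varphi\in\mathrm{Aut}(N_0)$ is conjugation by $(b,t)$. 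Here a small point needs checking, namely that conjugation by $(b,t)$ really preserves $N_0=N\cap\mathrm{Comm}_{\Gamma}[H]$; this holds because $N$ is normal in $\Gamma$ and the commensurator of $H$ is normalized by itself, so $N_0$ is normal in $\mathrm{Comm}_{\Gamma}[H]$.

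Next I would produce the element $(a,t^{n})$. Because $H$ is infinite virtually cyclic and $\pi(H)$ is infinite cyclic, the kernel $H\cap N$ is finite, and $H$ contains an infinite cyclic subgroup $C$ of finite index; after passing to a further finite-index infinite cyclic subgroup I may assume $C\cap N=\{e\}$, so $\pi|_C$ is injective and $C=\langle(a,t^{n})\rangle$ for some $a\in N$ and $n\geq 1$ (here $a\in N_0$ automatically since $C\subseteq H\subseteq\mathrm{Comm}_{\Gamma}[H]$). Property (i), that $\varphi(a)=a$, should come from the fact that $(a,t^{n})$ commutes with $(a,t^{n})^{m}=(a\,\varphi^{n}(a)\cdots\varphi^{(m-1)n}(a),\,t^{nm})$ in the natural way, or more directly: compute $(b,t)(a,t^n)(b,t)^{-1}$ in $\Gamma$ — call it $(a',t^n)$ — and observe $a'=\varphi(a)$ by definition of $\varphi$; one then argues $a'=a$ using that $(a,t^n)$ generates a finite-index subgroup of the virtually cyclic group $\langle (a,t^n),(b,t)\rangle$, so $(b,t)(a,t^n)(b,t)^{-1}$ and $(a,t^n)$ generate commensurable infinite cyclic groups inside an infinite cyclic group after raising to powers, forcing equality of suitable powers and hence (comparing $N$-coordinates) $\varphi(a)=a$. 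This is the step I expect to require the most care: extracting $\varphi(a)=a$ rather than merely $\varphi^{j}(a)=a$ for some $j$, or $\varphi(a)=a^{\pm1}$-type nonsense, depends on using the semi-direct product structure and the precise definition of $\varphi$ as conjugation by the chosen lift $(b,t)$ of $t$; one has to be a little clever about which lift to pick.

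Finally, for property (ii), take an arbitrary $x\in N_0$, i.e.\ $(x,1)\in\mathrm{Comm}_{\Gamma}[H]$. Then $(x,1)$ commensurates $H$, hence commensurates the finite-index subgroup $C=\langle(a,t^n)\rangle$, so $|C\cap C^{(x,1)}|=\infty$; thus there is $m\geq 1$ with $(a,t^n)^{m}$ conjugate by $(x,1)^{-1}$... more precisely $(x,1)(a,t^n)^{m}(x,1)^{-1}\in C$, and comparing $\langle t\rangle$-coordinates this conjugate must be $(a,t^n)^{\pm m}$; the sign is $+$ since conjugation by an element of $N$ fixes the $\langle t\rangle$-coordinate $t^{nm}$. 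Computing the $N$-coordinate of the left side using $(x,1)(a,t^n)^m(x,1)^{-1}$ and the formula for $(a,t^n)^m$, and the $N$-coordinate of $(a,t^n)^m$ using $\varphi(a)=a$ (which gives $(a,t^n)^m=(a^m,t^{nm})$), the identity of $N$-coordinates becomes $x\,\varphi^{0}(a)\,\varphi^{n}(a)\cdots\,\varphi^{(m-1)n}(\text{stuff})\,x^{-1}$-style — after simplification using $\varphi(a)=a$ it reduces exactly to $x a^m \varphi^{nm}(x)^{-1}=a^m$, i.e.\ $\varphi^{nm}(x)=a^{-m}x a^m$, as claimed. The only subtlety here is justifying that the conjugate lands in $C$ (as opposed to merely in $\mathrm{Comm}$) with the correct power and sign, which follows from $\pi(C)=\langle t^{n}\rangle$ being infinite cyclic together with $\pi$ being a homomorphism, so that intersection of infinite subgroups of $\langle t^{n}\rangle$ forces a common power.
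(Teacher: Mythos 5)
There is a genuine gap in your derivation of property (i). After choosing an infinite cyclic $C=\langle(a,t^n)\rangle\leq H$ with $C\cap N=\{e\}$, you claim that $\varphi(a)=a$ can be forced for this $a$. That is false in general: commensuration by the lift of $t$ only gives some $k\in\mathbb{N}_1$ with $(e,t)(a,t^n)^k(e,t^{-1})=(a,t^n)^k$ (equality of the $t$-exponents rules out the inverse power), and comparing $N$-coordinates this yields $\varphi(A)=A$ only for the product $A=a\,\varphi^{n}(a)\cdots\varphi^{(k-1)n}(a)$, not for $a$ itself. A concrete counterexample: $\Gamma=\mathbb{Z}^2\rtimes\mathbb{Z}$ with $t$ acting by swapping coordinates, $H=\langle((1,0),t)\rangle$; here $\mathrm{Comm}_{\Gamma}[H]=\Gamma$, $\varphi$ is the swap, and $\varphi\big((1,0)\big)\neq(1,0)$, while only the square $((1,1),t^2)$ has $\varphi$-fixed $N$-coordinate. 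Your fallback argument is also unsound: $\langle(a,t^n),(b,t)\rangle$ need not be virtually cyclic ($(b,t)$ is merely an element of the commensurator, which can be as large as $\Gamma$ — in the example above this subgroup is all of $\mathbb{Z}^2\rtimes\mathbb{Z}$), and even granting commensurability of $\langle{}^{(b,t)}(a,t^n)\rangle$ and $\langle(a,t^n)\rangle$, the resulting equality of $k$-th powers does not force $\varphi(a)=a$, for the reason just explained.

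The repair is exactly the move the paper makes: take a generator $(b,t^r)$ of an infinite cyclic subgroup of $H$, use commensuration by the lift to get $(e,t)(b,t^r)^k(e,t^{-1})=(b,t^r)^k$ for some $k\in\mathbb{N}_1$, and then \emph{redefine} $a:=\prod_{i=0}^{k-1}\varphi^{ir}(b)$ and $n:=rk$, so that $(a,t^n)=(b,t^r)^k\in H$ and the displayed equation literally reads $\varphi(a)=a$. Since the lemma only asserts the existence of some such $a$ and $n$, this costs nothing. With that correction, your treatment of (ii) is fine and coincides with the paper's: $\varphi(a)=a$ gives $(a,t^n)^m=(a^m,t^{nm})$, commensuration by $(x,1)$ together with the $t$-exponent comparison gives $(x,1)(a,t^n)^m(x^{-1},1)=(a,t^n)^m$ for some $m\in\mathbb{N}_1$, and comparing $N$-coordinates yields $\varphi^{nm}(x)=a^{-m}xa^m$. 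Your setup of the splitting $\mathrm{Comm}_{\Gamma}[H]=N_0\rtimes_{\varphi}\langle t\rangle$ is also correct and matches the paper (the normality of $N_0=N\cap\mathrm{Comm}_{\Gamma}[H]$ is automatic, being the kernel of $\pi$ restricted to the commensurator).
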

\begin{proof}
We may assume that $H$ is infinite cyclic.
Since $H$ is contained in $\mathrm{Comm}_{\Gamma}[H]$ but not contained
in $N$, we have a split short exact sequence,
\[ 1 \rightarrow N_0 \rightarrow \mathrm{Comm}_{\Gamma}[H] \rightarrow
\mathbb{Z}=\langle t \rangle \rightarrow 0 \]
determined by some $\varphi \in \mathrm{Aut}(N_0)$, where $N_0$ is a
subgroup of $N$. Let $(b,t^r)$ be a generator of $H$, with $r \in \mathbb{N}_1$. Since $(e,t) \in
\mathrm{Comm}_{\Gamma}[H]$, we must have that $\langle
{}^{(e,t)}(b,t^r)\rangle\sim \langle(b,t^r) \rangle$. This implies that
there exists a $k \in \mathbb{N}_1$ such that \[
(e,t)(b,t^r)^k(e,t^{-1})=(b,t^r)^k.\]
Denoting $a=\prod_{i=0}^{k-1}\varphi^{ir}(b)$,  the equation above
implies that $\varphi(a)=a$. Let $n=rk$, then $\langle (a,t^n) \rangle$ is an infinite cyclic subgroup of $H$ such that $\varphi(a)=a$.
Note that $(a,t^n)^s=(a^s,t^{ns})$ for each $s \in \mathbb{N}$.
Let $x \in N_0$. Since $(x,1) \in \mathrm{Comm}_{\Gamma}[H]$ , we know
there exists  an $m \in \mathbb{N}_1$ such that \[
(x,1)(a,t^n)^m(x^{-1},1)=(a,t^n)^m.\]
This implies that $(xa^m\varphi^{nm}(x)^{-1},t^{nm})=(a^m,t^{nm})$, and
hence  $\varphi^{nm}(x)=a^{-m}xa^m$.
\end{proof}
\begin{proposition} \label{prop: finitely generated} Let $N$ be a group
and consider $\Gamma=N \rtimes \mathbb{Z}$.
Let $H$ be an infinite virtually cyclic subgroup of $\Gamma$ that is not
contained in $N$. Then the following hold.
\smallskip
\begin{itemize}
\item[(a)] Every finitely generated subgroup of
$\mathrm{Comm}_{\Gamma}[H]$ that contains $H$, is a
semi-direct product of a finitely generated subgroup of $N$ with
$\mathbb{Z}$.
\medskip
\item[(b)] Let $K$ be a finitely generated subgroup of
$\mathrm{Comm}_{\Gamma}[H]$ that contains $H$. Then $H$ has a finite index subgroup contained in the center of $K$.
\end{itemize}
\end{proposition}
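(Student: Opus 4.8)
The plan is to derive both statements from Lemma~\ref{lemma: key lemma}, which supplies a decomposition $\mathrm{Comm}_{\Gamma}[H]=N_0\rtimes_{\varphi}\langle t\rangle$ with $N_0\leq N$, together with $a\in N_0$ and $n\in\mathbb{N}_1$ such that $\langle(a,t^n)\rangle\leq H$, $\varphi(a)=a$, and for every $x\in N_0$ there is $m\in\mathbb{N}_1$ with $\varphi^{nm}(x)=a^{-m}xa^m$. The first observation is that, since $\varphi(a)=a$, one has $(a,t^n)^m=(a^m,t^{nm})$ for all $m$, and that a direct computation in the semidirect product identifies the relation $\varphi^{nm}(x)=a^{-m}xa^m$ with the statement that $(x,1)$ commutes with $(a,t^n)^m$. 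Thus Lemma~\ref{lemma: key lemma}(ii) says: inside $\mathrm{Comm}_{\Gamma}[H]$, every element of $N_0$ is centralized by some power of $(a,t^n)$.

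Next I would pin down the structure of $K$. Since $H\not\subseteq N$ and $H\subseteq K\subseteq N_0\rtimes_{\varphi}\langle t\rangle$, the image of $K$ in $\langle t\rangle$ is a nontrivial, hence infinite cyclic, subgroup $\langle t^d\rangle$; choose $s=(c,t^d)\in K$ lying over a generator, with $c\in N_0$. Replacing a finite generating set of $K$ by suitable products with powers of $s$, we may write $K=\langle\kappa_1,\dots,\kappa_r,s\rangle$ with $\kappa_i\in M:=K\cap N_0$; then $K=M\rtimes\langle s\rangle$ with $\langle s\rangle\cong\mathbb{Z}$, and $M$ is generated by the conjugates $s^{\ell}\kappa_is^{-\ell}$, $\ell\in\mathbb{Z}$. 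Moreover $(a,t^n)\in K$ maps into $\langle t^d\rangle$, so $d\mid n$; writing $n=dq$, we get $(a,t^n)s^{-q}\in M$.

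The crucial step is to produce a central power of $(a,t^n)$. Applying Lemma~\ref{lemma: key lemma}(ii) to the elements $c,\kappa_1,\dots,\kappa_r$ of $N_0$ and taking a common multiple $m$ of the resulting integers, we obtain that $(a,t^n)^m$ commutes with every $\kappa_i$ and with $(c,1)$; the latter is exactly the relation $\varphi^{nm}(c)=a^{-m}ca^m$. A short computation in the semidirect product, using $\varphi^{d}(a)=a$ (which follows from $\varphi(a)=a$), shows that this same relation also forces $(a,t^n)^m$ to commute with $s=(c,t^d)$. Hence $(a,t^n)^m$ commutes with the whole generating set $\{\kappa_1,\dots,\kappa_r,s\}$ of $K$, so $(a,t^n)^m\in Z(K)$. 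Since $\langle(a,t^n)\rangle$ is an infinite cyclic, hence finite-index, subgroup of the infinite virtually cyclic group $H$, the cyclic group $\langle(a,t^n)^m\rangle$ has finite index in $H$ and lies in $Z(K)$; this is part (b).

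For part (a) it then suffices to prove that $M=K\cap N$ is finitely generated. The central subgroup $\langle(a,t^n)^m\rangle$ meets $M$ trivially, since no nonzero power of $(a,t^n)$ lies in $N$, so $M\langle(a,t^n)^m\rangle=M\times\langle(a,t^n)^m\rangle$ is a subgroup of $K$. Letting $\pi\colon K\to K/M\cong\mathbb{Z}$ denote the quotient map, this subgroup has image $\pi(\langle(a,t^n)^m\rangle)=qm\mathbb{Z}$, so its index in $K$ is $qm$, in particular finite. A finite-index subgroup of the finitely generated group $K$ is finitely generated, and hence so is its direct factor $M$. Therefore $K=M\rtimes\langle s\rangle$ with $M$ a finitely generated subgroup of $N$ and $\langle s\rangle\cong\mathbb{Z}$, which proves (a). The substantive content is entirely Lemma~\ref{lemma: key lemma}(ii); everything else is bookkeeping inside $N_0\rtimes_{\varphi}\langle t\rangle$. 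The point most likely to need care is checking that the single relation $\varphi^{nm}(c)=a^{-m}ca^m$ governs commutation of $(a,t^n)^m$ both with $(c,1)$ and with $s=(c,t^d)$ — this is where $\varphi(a)=a$ is used — together with the observation that finite generation of $M$ is to be harvested not from $M$ directly but from the finite-index subgroup $M\times\langle(a,t^n)^m\rangle\leq K$.
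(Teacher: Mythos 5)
Your argument is correct, and while it runs on the same engine as the paper's proof---Lemma \ref{lemma: key lemma}, together with the observation that condition (ii) there says each element of $N_0$ is centralized by some power of $(a,t^n)$, so that taking a common multiple $m$ over a finite generating set of $K$ makes $(a,t^n)^m$ central---it is organized differently. The paper applies Lemma \ref{lemma: key lemma} to $K$ itself (implicitly using that $K=(K\cap N)\rtimes\mathbb{Z}$ and $\mathrm{Comm}_K[H]=K$), takes generators $(x_1,1),\dots,(x_k,1),(e,t)$ of $K$, and proves (a) by exhibiting the explicit finite generating set $\{\varphi^{j}(x_i)\mid 0\le j< ln\}\cup\{a\}$ of $N_0$, with the central element $(a,t^n)^l$ giving (b) in the same stroke. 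You instead apply the lemma to the ambient $\mathrm{Comm}_{\Gamma}[H]$, carry out the decomposition $K=M\rtimes\langle s\rangle$ with $M=K\cap N_0$ by hand (this fills in a step the paper leaves implicit), prove (b) first by commuting $(a,t^n)^m$ with the generators $\kappa_1,\dots,\kappa_r,s$ --- your check that the single relation $\varphi^{nm}(c)=a^{-m}ca^{m}$, together with $\varphi(a)=a$, forces commutation with both $(c,1)$ and $s=(c,t^d)$ is exactly right --- and then deduce (a) abstractly: $M\times\langle(a,t^n)^m\rangle$ is a subgroup of index $qm$ in the finitely generated group $K$, hence finitely generated, hence so is its direct factor $M$. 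The trade-off is that the paper's route produces an explicit generating set for $K\cap N$, whereas yours avoids that bookkeeping entirely, replacing it with a standard finite-index (Schreier) argument at the cost of being non-constructive; both are complete proofs of the proposition.
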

\begin{proof}
Let $K$ be a finitely generated subgroup of $\mathrm{Comm}_{\Gamma}[H]$ that contains $H$. By Lemma \ref{lemma: key lemma}, we have that
$K=N_0 \rtimes_{\varphi} \langle t \rangle$, where $N_0$
a subgroup of $N$. Also by Lemma \ref{lemma: key lemma}, there exists an element $a \in N_0$ and an integer $n \in \mathbb{N}_1$ such that $\langle(a,t^n) \rangle$ is an infinite cyclic subgroup of $H$ and for which $\varphi(a)=a$. Moreover, for each $x \in N_0$ there exists $m \in \mathbb{N}_1$ such that $\varphi^{nm}(x)=a^{-m}xa^m$. Since $K$ is finitely generated, it can be generated by elements of the form
\[ (x_1,1),(x_2,1),\ldots,(x_k,1),(e,t). \]
For each  $i \in \{1,\ldots,k\}$, take  $l_i \in \mathbb{N}_1$ such that
$\varphi^{nl_i}(x_i)=a^{-l_i}x_ia^{l_i}$. Denote $l=\prod_{i=1}^k l_i$. Then
$\varphi^{ln}(x_i)=a^{-l}x_ia^{l}$ for all $i \in \{1,\ldots,k\}$.
Now, the set
\[ \Big\{\varphi^j(x_i) \ | \ i \in \{1,\ldots,k\}, j \in
\{0,\ldots,ln-1\}\Big\} \cup \{a\} \]
is a set of generators of $N_0$ and $\langle (a^l,t^{nl})\rangle = \langle (a,t^n)^l\rangle$ is a finite index subgroup of $H$ that is contained in the center of $K$.
\end{proof}
The previous proposition allows us to determine the structure of commensurators of infinite virtually cyclic groups inside certain elementary amenable groups.
\begin{proposition} \label{prop: loc fin by virt poly}
Let $\Lambda$ be a locally finite group, $N$ be a torsion-free
nilpotent group, and $$\Lambda \rightarrow \overline{N} \rightarrow N$$ be a short exact sequence of groups.
Let $Q$ be a virtually cyclic group and suppose that there is also a short exact sequence of groups
$$\overline{N} \rightarrow G \xrightarrow{\pi} Q.$$
Let $H$ be an infinite virtually cyclic subgroup of $G$ such that $\mathrm{Comm}_G[H]=G$ and such that $\pi(H)$ is a finite index subgroup of $Q$.
Then every finitely generated subgroup of $G$ is countable locally
finite-by-virtually polycyclic.
\end{proposition}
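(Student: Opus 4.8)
The plan is to reduce, in stages, to a situation where Proposition~\ref{prop: finitely generated}(a) applies directly. First I would reduce to the case where $G$ is finitely generated and contains $H$. Given an arbitrary finitely generated subgroup $K_0$ of $G$, set $K=\langle K_0,H\rangle$; this is finitely generated (since $H$, being virtually cyclic, is finitely generated), contains $K_0$ and $H$, and still satisfies all the hypotheses of the proposition: $K\cap\overline{N}$ fits into a short exact sequence $(\Lambda\cap K)\to(K\cap\overline{N})\to N'$ with $\Lambda\cap K$ locally finite and $N'\leq N$ torsion-free nilpotent, $\pi(K)\leq Q$ is virtually cyclic, $\mathrm{Comm}_K[H]=\mathrm{Comm}_G[H]\cap K=K$ by the basic property (ii) of commensurators, and $\pi(H)$ has finite index in $\pi(K)$ because it has finite index in $Q$. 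Since being countable locally finite-by-virtually polycyclic passes to subgroups, it suffices to prove $K$ has this property, and finitely generated groups are countable; so henceforth $G$ is finitely generated and $H\leq G$.

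Next I would quotient out the locally finite part. As $N$ is torsion-free, every torsion element of $\overline{N}$ maps to the identity of $N$ and so lies in $\Lambda$, while every element of $\Lambda$ is torsion; hence $\Lambda$ is exactly the set of torsion elements of $\overline{N}$, in particular characteristic in $\overline{N}$, and therefore normal in $G$. Put $\bar{G}=G/\Lambda$ and let $\bar{H}$ be the image of $H$. Then $\bar{G}$ is finitely generated, $\pi$ factors through $\bar{G}$ as $\bar\pi\colon\bar{G}\to Q$ with kernel $N$, the group $\bar{H}$ is still infinite virtually cyclic (because $H\cap\Lambda$ is a finitely generated locally finite group, hence finite), $\mathrm{Comm}_{\bar{G}}[\bar{H}]=\bar{G}$ (commensurator conditions survive under quotients, again by property (ii)), and $\bar\pi(\bar{H})=\pi(H)$ has finite index in $Q$. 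Since $\Lambda$ is locally finite, it now suffices to show that $\bar{G}$ is virtually polycyclic. If $Q$ is finite, then $N$ has finite index in $\bar{G}$, hence $N$ is finitely generated; a finitely generated torsion-free nilpotent group is polycyclic, so $\bar{G}$ is virtually polycyclic and we are done in this case.

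Suppose now $Q$ is infinite. Replacing $\bar{H}$ by an infinite cyclic subgroup of finite index (which still commensurates all of $\bar{G}$ by property (i), and still has image of finite index in $Q$), we may assume $\bar\pi(\bar{H})\cong\Z$ has finite index in $Q$. Set $\bar{G}'=\bar\pi^{-1}(\bar\pi(\bar{H}))$. This is a finite-index subgroup of $\bar{G}$, hence finitely generated, it contains $\bar{H}$, and $\bar{G}'/N\cong\bar\pi(\bar{H})\cong\Z$; since $\bar{H}\cap N=\{e\}$ and $\bar{H}$ maps isomorphically onto $\bar\pi(\bar{H})$, the extension splits, giving $\bar{G}'=N\rtimes\bar{H}\cong N\rtimes\Z$ with $\bar{H}$ not contained in $N$. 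Moreover $\mathrm{Comm}_{\bar{G}'}[\bar{H}]=\mathrm{Comm}_{\bar{G}}[\bar{H}]\cap\bar{G}'=\bar{G}'$. Applying Proposition~\ref{prop: finitely generated}(a) with ambient group $\bar{G}'=N\rtimes\Z$ and with the finitely generated subgroup taken to be $\bar{G}'$ itself (it contains $\bar{H}$), we conclude that $\bar{G}'$ is a semidirect product $N_0\rtimes\Z$ of a finitely generated subgroup $N_0\leq N$ with $\Z$. Such an $N_0$ is finitely generated torsion-free nilpotent, hence polycyclic, so $\bar{G}'$ is polycyclic; therefore $\bar{G}$, which contains $\bar{G}'$ with finite index, is virtually polycyclic. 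Hence $G$ is locally finite-by-virtually polycyclic, and being finitely generated it is countable.

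The main obstacle is bookkeeping rather than depth: one must arrange the successive reductions — restriction to a finitely generated subgroup, passage to $G/\Lambda$, passage to the finite-index subgroup $\bar{G}'$, and replacement of $H$ by a finite-index infinite cyclic subgroup — so that at every stage the hypothesis $\mathrm{Comm}[H]=\text{(whole group)}$ is preserved. This hypothesis is exactly what is needed, via Proposition~\ref{prop: finitely generated}, to force the relevant subgroup $N_0$ of $N$ to be finitely generated and thus rule out non-polycyclic behaviour such as that of $\Z[1/2]\rtimes\Z$; each individual preservation step is immediate from the elementary properties (i) and (ii) of commensurators recorded above, but they must be carried out in the right order.
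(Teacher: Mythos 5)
Your proof is correct and takes essentially the same route as the paper's: reduce to finitely generated subgroups containing $H$, split on whether $\pi(H)$ (equivalently $Q$) is finite, pass to the finite-index preimage of $\pi(H)$ so as to land in a semidirect product with $\mathbb{Z}$, and invoke Proposition \ref{prop: finitely generated}(a) via $\mathrm{Comm}[H]$ being the whole group to force the base of that semidirect product to be finitely generated. The only real difference is organizational: you quotient out $\Lambda$ first (noting it is the torsion set of $\overline{N}$, hence characteristic), applying Proposition \ref{prop: finitely generated}(a) inside $N\rtimes\mathbb{Z}$, whereas the paper applies it inside $\overline{N}\rtimes\mathbb{Z}$ and absorbs $\Lambda$ into the locally finite kernel afterwards.
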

\begin{proof}
We may assume that $H$ is infinite cyclic.
First consider the case where $\pi(H)$, and hence $Q$, is finite. Let $K$ be a
finitely generated subgroup of $G$. Then $P=K\cap  \overline{N}$ is a
finite index subgroup of $K$ and therefore also finitely generated. It
follows that $P$ is countable locally finite-by-poly-$\mathbb{Z}$.
Hence, $K$ is countable locally finite-by-virtually polycyclic. \\
\indent Now assume that $\pi(H)$ is infinite. Let $K$ be a finitely generated
subgroup of $G$ that contains $H$. Then $P = K \cap
\pi^{-1}(\pi(H))$ is a finite index subgroup of $K$ that contains $H$
and it is finitely generated. Since $\pi(H)$ is infinite, we have $\pi^{-1}(\pi(H))\cong \overline{N}\rtimes \mathbb{Z}$.
This implies that $P$ is isomorphic to a subgroup of a semi-direct product $\overline{N}\rtimes \mathbb{Z}$
such that $H$ is not contained in $\overline{N}$ and
$\mathrm{Comm}_{\overline{N}\rtimes \mathbb{Z}}[H]=\overline{N}\rtimes
\mathbb{Z}$. It follows from Proposition \ref{prop: finitely
generated}(a) that $P$ is countable locally finite-by-poly-$\mathbb{Z}$. We conclude that $K$ is countable locally
finite-by-virtually polycyclic. This ends the proof, since every finitely generated subgroup of $G$ is contained in a finitely generated subgroup of $G$ that contains $H$.
\end{proof}

\section{Elementary amenable groups}
The class of elementary amenable groups is by definition the smallest class of groups that is closed under taking directed unions, extensions, subgroups and quotients, and that contains all finite groups and all abelian groups. This class of groups contains, for example, all virtually solvable groups. In \cite{Hillman}, Hillman extended the notion of Hirsch length from polycyclic groups to elementary amenable groups.
Its behaviour under taking subgroups, directed unions, and extensions is similar to that of for polycyclic groups. We refer the reader to section $1$ in \cite{Hillman} for the precise definition of Hirsch length.

We shall only consider elementary amenable groups of finite Hirsch length. For these groups one has the following structure theorem.
\begin{theorem}[{Wehrfritz, \cite[(g)]{Wehrfritz}, see also Hillman-Linnell, \cite{HillmannLinnel}}] \label{th: struc elem am} Let $\Gamma$ be an elementary amenable group of finite Hirsch length $h$.
There exist characteristic subgroups $\Lambda(\Gamma)\subseteq N\subseteq M$ of $\Gamma$ and a function $j: \mathbb{N} \rightarrow \mathbb{N}$ such that
\smallskip
\begin{itemize}
\item[(a)] $\Lambda(\Gamma)$ is the unique maximal normal locally finite subgroup of $\Gamma$, i.e. the locally finite radical;
\medskip
\item[(b)] $N/\Lambda(\Gamma)$ is torsion-free nilpotent;
\medskip
\item[(c)] $M/N$ is free abelian of finite rank;
\medskip
\item[(d)] $[\Gamma:M] \leq j(h)$.
\end{itemize}
\end{theorem}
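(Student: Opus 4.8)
The plan is to construct the chain $\Lambda(\Gamma)\subseteq N\subseteq M$ in three stages, keeping track of bounds in terms of $h$ throughout; this follows the strategy of Hillman--Linnell \cite{HillmannLinnel} for the coarse structure, refined by Wehrfritz \cite{Wehrfritz}. \emph{Stage 1 (the locally finite radical and a reduction).} For any group, the product of two normal locally finite subgroups is locally finite: if $A,B$ are such then $AB/B\cong A/(A\cap B)$ is locally finite, and a finitely generated subgroup of the locally-finite-by-locally-finite group $AB$ meets $B$ in a finite-index, hence finitely generated, hence finite subgroup. Consequently the normal locally finite subgroups of $\Gamma$ form a set directed by inclusion, and its union $\Lambda(\Gamma)$ is locally finite, normal, maximal among such subgroups, and characteristic since its defining property is automorphism-invariant. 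Passing to $\overline{\Gamma}:=\Gamma/\Lambda(\Gamma)$, which is again elementary amenable of Hirsch length $\le h$ but now has \emph{trivial} locally finite radical, it suffices to find characteristic subgroups $\overline{N}\subseteq\overline{M}$ of $\overline{\Gamma}$ with $\overline{N}$ torsion-free nilpotent, $\overline{M}/\overline{N}$ free abelian of finite rank, and $[\overline{\Gamma}:\overline{M}]\le j(h)$; their preimages in $\Gamma$ are then characteristic, contain $\Lambda(\Gamma)$, and have the required properties (the rank of $\overline{M}/\overline{N}$ and the Hirsch length of $\overline{N}$ are automatically $\le h$ by additivity of Hirsch length \cite{Hillman}).

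\emph{Stage 2 ($\overline{\Gamma}$ is virtually soluble with bounded invariants).} One proves, by induction on $h$ and, for fixed $h$, by induction along the transfinite hierarchy that produces elementary amenable groups from finite and abelian ones under subgroups, quotients, extensions and directed unions, that $\overline{\Gamma}$ has a soluble normal subgroup $\overline{S}$ whose derived length and index in $\overline{\Gamma}$ are bounded by functions of $h$. The base cases are trivial, and the subgroup, quotient and extension steps use only additivity of Hirsch length; the delicate step is the directed union $\overline{\Gamma}=\bigcup_i\overline{\Gamma}_i$, where one uses that the Hirsch lengths $h(\overline{\Gamma}_i)$ are eventually equal to $h$ (so a cofinal part of the union already carries uniform structure) together with the fact --- this is exactly why $\Lambda(\Gamma)$ had to be removed first --- that an ascending union of finite groups contributes nothing inside $\overline{\Gamma}$.

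\emph{Stage 3 (the nilpotent and free abelian layers).} Let $\overline{N}$ be the Hirsch--Plotkin radical of $\overline{\Gamma}$, a characteristic locally nilpotent normal subgroup. Its set of torsion elements is a characteristic torsion locally nilpotent --- hence locally finite --- normal subgroup of $\overline{\Gamma}$, so it is trivial; and a torsion-free locally nilpotent group of finite Hirsch length is nilpotent, so $\overline{N}$ is torsion-free nilpotent of class and Hirsch length $\le h$. It remains to check that $\overline{\Gamma}/\overline{N}$ is free-abelian-by-finite of rank and index bounded by functions of $h$ --- the analogue, for soluble-by-finite groups of finite Hirsch length, of the fact that the Fitting quotient of a polycyclic group is abelian-by-finite. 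Here one shows that the centralizer of $\overline{N}$ in $\overline{\Gamma}$ is contained in $\overline{N}$ up to finite index, so that $\overline{\Gamma}/\overline{N}$ is controlled, up to a finite kernel and a nilpotent (unipotent) part absorbed into $\overline{N}$, by the action on the torsion-free abelian sections $\gamma_k(\overline{N})/\gamma_{k+1}(\overline{N})$, that is, by subgroups of $\mathrm{GL}_r(\mathbb{Q})$ with $r\le h$; since a divisible abelian subgroup of $\mathrm{GL}_r(\mathbb{Q})$ is necessarily unipotent, the finite-rank structure theory of soluble groups then yields in $\overline{\Gamma}/\overline{N}$ a free abelian normal subgroup of bounded rank and bounded index, whose preimage in $\overline{\Gamma}$ --- replaced by its characteristic core --- is the desired $\overline{M}$.

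\emph{Main obstacle.} Stages 1 and 2 are essentially bookkeeping with the closure properties of elementary amenable groups and the additivity of Hirsch length; they recover the Hillman--Linnell statement that $\overline{\Gamma}$ is locally-finite-by-virtually-soluble. The real work is Stage 3: upgrading ``virtually soluble of finite Hirsch length'' to the precise torsion-free-nilpotent-by-(free-abelian)-by-finite tower with every parameter bounded by $h$ alone. This is where one must rule out the infinite-rank and non-minimax behaviour that soluble groups of finite Hirsch length can exhibit beyond the polycyclic case, and it is precisely the refinement supplied by Wehrfritz.
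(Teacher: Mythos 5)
The paper does not prove this statement at all: it is quoted from Wehrfritz \cite{Wehrfritz} (see also Hillman--Linnell \cite{HillmannLinnel}) and used as a black box, so there is no internal argument to compare yours against. Judged on its own terms, your Stage 1 is complete and correct: the product argument for normal locally finite subgroups, the formation of $\Lambda(\Gamma)$ as a directed union, the fact that $\Gamma/\Lambda(\Gamma)$ has trivial locally finite radical, and the remark that preimages of characteristic subgroups of $\Gamma/\Lambda(\Gamma)$ are characteristic in $\Gamma$ because $\Lambda(\Gamma)$ is characteristic.

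From there on, however, the proposal is an outline of the literature route rather than a proof, and the gaps sit exactly where the theorem has its content. Stage 2 is not ``essentially bookkeeping'': showing that $\overline{\Gamma}=\Gamma/\Lambda(\Gamma)$ is virtually soluble with derived length \emph{and index bounded by a function of $h$ alone} is the substance of Hillman--Linnell, and your treatment of the directed-union step (``a cofinal part of the union already carries uniform structure'') presupposes the uniformity it is meant to establish; without uniform bounds on the approximating groups (coming, e.g., from Zassenhaus-type bounds on derived lengths of soluble subgroups and Jordan--Schur/Minkowski bounds on finite subgroups of $\mathrm{GL}_h(\mathbb{Q})$), an ascending union of virtually soluble groups need not contain a soluble subgroup of bounded index. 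Likewise in Stage 3 the decisive claims --- that $C_{\overline{\Gamma}}(\overline{N})$ lies in $\overline{N}$ up to finite index, that the induced representations on the lower central sections land in $\mathrm{GL}_r(\mathbb{Q})$ with $r\le h$, and above all that ``the finite-rank structure theory of soluble groups then yields'' a free abelian normal subgroup of $\overline{\Gamma}/\overline{N}$ of rank and index bounded in terms of $h$ --- are invoked, not proved; this is precisely Wehrfritz's refinement, as your closing paragraph concedes. Two smaller points: the passage from a bounded-index free abelian subgroup of $\overline{\Gamma}/\overline{N}$ to a \emph{characteristic} $\overline{M}$ needs an argument (intersect all subgroups of index at most the bound, using that $\overline{\Gamma}/\overline{N}$ is finitely generated, so that the index stays bounded by a function of $h$ --- ``characteristic core'' alone does not automatically preserve finite index), and the fact that a divisible abelian subgroup of $\mathrm{GL}_r(\mathbb{Q})$ is unipotent is true but itself deserves proof. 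So: correct strategy, correct easy steps, but the steps that make the statement true with a bound $j(h)$ are deferred to the very sources the paper cites.
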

Our task in this section is to prove Theorem A. But first, we need a few lemmas. We begin with a result that is closely related to Lemma 5.15 of \cite{LuckWeiermann}.
\begin{lemma} \label{lemma: virt poly} Let $H$ be a infinite cyclic
subgroup of a group $\Gamma$. If an infinite cyclic subgroup $K$ of $H$
is normal in $\mathrm{Comm}_{\Gamma}[H]$, then
$\mathrm{gd}_{\mathcal{F}[H]}(\mathrm{Comm}_{\Gamma}[H])=
\underline{\mathrm{gd}}(\mathrm{Comm}_{\Gamma}[H]/K)$ and $\mathrm{cd}_{\mathcal{F}[H]}(\mathrm{Comm}_{\Gamma}[H])=
\underline{\mathrm{cd}}(\mathrm{Comm}_{\Gamma}[H]/K)$. If $\mathrm{Comm}_{\Gamma}[H]$ is virtually polycyclic, then $H$ always has an infinite cyclic subgroup that is normal in $\mathrm{Comm}_{\Gamma}[H]$.
\end{lemma}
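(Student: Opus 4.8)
Here is how I would approach the statement.

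\medskip

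The plan is to identify $\mathcal{F}[H]$ with a family pulled back along a quotient of $\mathrm{Comm}_{\Gamma}[H]$, and then transport a classifying space and a projective resolution back and forth. Write $C=\mathrm{Comm}_{\Gamma}[H]$, let $K\le H$ be the given infinite cyclic subgroup with $K\trianglelefteq C$, and let $q\colon C\to\bar C:=C/K$. The first step is to check that
\[
  \mathcal{F}[H]=\{\,L\le C\mid q(L)\text{ is finite}\,\}=q^{-1}(\overline{\mathcal F}),
\]
where $\overline{\mathcal{F}}$ is the family of finite subgroups of $\bar C$; in particular $K\in\mathcal{F}[H]$. This uses only that $K$ has finite index in $H$ (so a subgroup of $C$ is commensurable with $H$ iff it is commensurable with $K$) and that every infinite subgroup of a virtually cyclic group has finite index: if $L\in\mathcal{F}[H]$ is infinite virtually cyclic then $L\cap K$ is infinite, hence of finite index in $L$, so $q(L)$ is finite; conversely, if $q(L)$ is finite then $L\cap K$ is either trivial, so $L$ is finite, or infinite cyclic, so $L$ is infinite virtually cyclic with $|L\cap H|=\infty$.

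Granting this, both equalities follow from the general fact that, for a surjection $q\colon C\to\bar C$ with kernel $K$ and a family $\overline{\mathcal F}$ on $\bar C$, one has $\mathrm{gd}_{q^{-1}(\overline{\mathcal F})}(C)=\mathrm{gd}_{\overline{\mathcal F}}(\bar C)$, and likewise for $\mathrm{cd}$ when $K\in q^{-1}(\overline{\mathcal F})$. For the geometric dimension, inflating a model $X$ for $\underline E\bar C$ along $q$ gives a $C$-CW-complex with $X^{L}=X^{q(L)}$, hence a model for $E_{\mathcal{F}[H]}C$; conversely, since $K\in\mathcal{F}[H]$ is normal in $C$, the $K$-fixed point set $Y^{K}$ of a minimal model $Y$ for $E_{\mathcal{F}[H]}C$ is a $\bar C$-CW-complex with $(Y^{K})^{\bar L}=Y^{q^{-1}(\bar L)}$, so it is a model for $\underline E\bar C$ of dimension at most $\dim Y$. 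The same two operations have algebraic counterparts. The induced functor of orbit categories $\mathcal{O}_{\mathcal{F}[H]}C\to\mathcal{O}_{\overline{\mathcal F}}\bar C$, $C/L\mapsto\bar C/q(L)$, yields by restriction an exact functor on Bredon modules which fixes $\underline{\mathbb Z}$ and carries free modules to free modules --- the pull-back of the module represented by $\bar C/\bar L$ is the module of the inflated $C$-set $\bar C/\bar L$, a disjoint union of orbits with stabilizers in $\mathcal{F}[H]$ --- so a length-$\underline{\mathrm{cd}}(\bar C)$ projective resolution of $\underline{\mathbb Z}$ pulls back to one of the same length. For the reverse inequality one restricts a Bredon module over $\mathcal{O}_{\mathcal{F}[H]}C$ to the full subcategory on the objects $C/L$ with $K\le L$, which is equivalent to $\mathcal{O}_{\overline{\mathcal F}}\bar C$; using $K\trianglelefteq C$, this restriction is again exact, fixes $\underline{\mathbb Z}$, and carries free modules to free modules.

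For the last assertion we prove, by induction on the Hirsch length $h$ of $C=\mathrm{Comm}_{\Gamma}[H]$, that if $C$ is virtually polycyclic and $H$ is infinite cyclic then $H$ has a finite-index subgroup normal in $C$. If $h\le 1$, then $C$ is virtually cyclic and the core $\bigcap_{c\in C}cHc^{-1}$ is a finite intersection of finite-index subgroups commensurable with $H$, hence an infinite cyclic subgroup of $H$ normal in $C$. If $h\ge 2$, pick a finite-index normal poly-$\mathbb Z$ subgroup $P\trianglelefteq C$; it suffices to make a finite-index subgroup of $H_{P}:=H\cap P$ normal in $C$. Write $P=N_{0}\rtimes\mathbb Z$ with $N_{0}$ poly-$\mathbb Z$ of Hirsch length $h-1$. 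If $H_{P}\not\subseteq N_{0}$, Proposition~\ref{prop: finitely generated}(b) (with $P$ in the role of the finitely generated subgroup containing $H_P$) gives a finite-index subgroup $Z_{0}\le H_{P}$ contained in the centre of $P$. If $H_{P}\subseteq N_{0}$, then $H_{P}$ is an infinite cyclic commensurated subgroup of $N_{0}$, so by induction it has a finite-index subgroup normal in $N_{0}$; since $N_{0}$ is Noetherian and conjugation by the generator of $\mathbb Z$ permutes the infinite cyclic subgroups of $N_0$ that are normal in $N_0$ and commensurable with it, the unique maximal such subgroup $Z_{0}$ is normal in $P$. In both cases $Z_{0}\le H$ is infinite cyclic, commensurable with $H$, and normal in $P$. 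Now let $\mathcal N$ be the set of infinite cyclic subgroups of $P$ that are normal in $P$ and commensurable with $H$: it contains $Z_{0}$, and it is directed upward because for $Z',Z''\in\mathcal N$ the product $Z'Z''$ is torsion-free, normal in $P$, and virtually cyclic, hence infinite cyclic, and still commensurable with $H$; as $P$ is Noetherian, $\mathcal N$ has a unique maximal element $K^{\ast}$. Since $P\trianglelefteq C$ and $C=\mathrm{Comm}_{C}[H]$, conjugation by any $c\in C$ carries $\mathcal N$ into itself, hence fixes $K^{\ast}$; thus $K^{\ast}\trianglelefteq C$. Finally $K_{0}:=K^{\ast}\cap H$ is infinite cyclic, and for each $c\in C$ the subgroups $K^{\ast}\cap H$ and $K^{\ast}\cap cHc^{-1}=cK_{0}c^{-1}$ have the same index in $K^{\ast}\cong\mathbb Z$, so they coincide; therefore $K_{0}$ is a finite-index subgroup of $H$ normal in $C$.

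The only mildly technical checks are that the orbit-category functors above preserve freeness and that the subcategory on the objects $C/L$ with $K\le L$ is equivalent to $\mathcal{O}_{\overline{\mathcal F}}\bar C$; both are routine. The one genuinely essential point --- and the step I expect to require the most care --- is that a virtually polycyclic group satisfies the ascending chain condition on subgroups: this is what produces the maximal element $K^{\ast}$ of $\mathcal N$ (equivalently, what rules out Baumslag--Solitar-type behaviour in the commensurator), and it is exactly where the hypothesis that $\mathrm{Comm}_{\Gamma}[H]$ is virtually polycyclic cannot be dropped.
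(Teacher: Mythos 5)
Your proposal is correct, but in two places it takes a genuinely different route from the paper. For the dimension equalities, your identification of $\mathcal{F}[H]$ with the pullback family $q^{-1}(\overline{\mathcal{F}})$ and the inflation/fixed-point argument for $\mathrm{gd}$ is essentially the paper's argument (the paper first reduces to $K=H$, uses its corollary on extensions for one inequality and the $H$-fixed points of a model for the other). For $\mathrm{cd}$, however, the paper invokes the Mart\'{\i}nez-P\'{e}rez spectral sequence of the extension $K\rightarrow \mathrm{Comm}_{\Gamma}[H]\rightarrow \mathrm{Comm}_{\Gamma}[H]/K$ and collapses it to get an isomorphism of Bredon cohomology groups, whereas you transport projective resolutions directly along two orbit-category functors (inflation, and restriction to the full subcategory of orbits $C/L$ with $K\le L$); both work, and yours is more elementary and self-contained, at the cost of yielding only the equality of dimensions rather than the natural isomorphism of cohomology (which is all the lemma requires). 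The largest divergence is the final assertion: the paper takes generators $x_1,\dots,x_r$ of the finitely generated group $\mathrm{Comm}_{\Gamma}[H]$, writes $x_i^{-1}t^{n_i}x_i=t^{m_i}$, quotes Lemma 5.14(v) of L\"uck--Weiermann to get $m_i=\pm n_i$, and concludes that $\langle t^{n}\rangle$ with $n=\prod_i n_i$ is normal; you instead give a self-contained induction on the Hirsch length, using Proposition \ref{prop: finitely generated}(b) together with the maximal condition on subgroups of polycyclic groups to produce a maximal infinite cyclic subgroup that is normal and commensurable with $H$, and then intersect with $H$. Your route is longer but avoids the external unimodularity lemma, which is exactly the point you correctly single out as essential; the paper's is a three-line argument once that lemma is granted. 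One small inaccuracy: in your Case B the maximal element $Z_0$ of the family of infinite cyclic subgroups of $N_0$ that are normal in $N_0$ and commensurable with $H_P$ need not be contained in $H$, so the sentence ``in both cases $Z_0\le H$'' is not quite right; this is harmless, since all you use is that $Z_0$ belongs to your set $\mathcal{N}$, and the final step $K_0=K^{\ast}\cap H$ restores containment in $H$.
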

\begin{proof}
We may assume that $H=K$ and that $H$ is normal in
$\mathrm{Comm}_{\Gamma}[H]$. Consider the extension
\begin{equation}\label{eq: comm ext} 0 \rightarrow  H \rightarrow \mathrm{Comm}_{\Gamma}[H]
\xrightarrow{\pi} \mathrm{Comm}_{\Gamma}[H]/ H \rightarrow 1 . \end{equation}
Note that every group in $\mathcal{F}[H]$ is mapped onto a finite
subgroup of $\mathrm{Comm}_{\Gamma}[H]/ H$ by $\pi$. On the other hand,
$\pi^{-1}(F) \in \mathcal{F}[H] $ for every finite subgroup $F$ of
$\mathrm{Comm}_{\Gamma}[H]/ H$. Therefore, it follows from Corollary
\ref{cor: gd ext bounds} that
$\mathrm{gd}_{\mathcal{F}[H]}(\mathrm{Comm}_{\Gamma}[H])\leq
\underline{\mathrm{gd}}(\mathrm{Comm}_{\Gamma}[H]/H)$.
On the other hand, if $X$ is a model for $E_{\mathcal{F}[H]}\mathrm{Comm}_{\Gamma}[H]$ then it is not difficult to verify that $X^H$ will be a model for $\underline{\mathrm{gd}}(\mathrm{Comm}_{\Gamma}[H]/H)$. We conclude that $\mathrm{gd}_{\mathcal{F}[H]}(\mathrm{Comm}_{\Gamma}[H])=
\underline{\mathrm{gd}}(\mathrm{Comm}_{\Gamma}[H]/H)$. To see that $\mathrm{cd}_{\mathcal{F}[H]}(\mathrm{Comm}_{\Gamma}[H])=
\underline{\mathrm{cd}}(\mathrm{Comm}_{\Gamma}[H]/H)$, let $\mathcal{H}$ be the family of finite subgroups of $ \mathrm{Comm}_{\Gamma}[H]/ H$ and consider the spectral sequence 
\[ E_2^{p,q}(M)=\mathrm{H}^p_{\mathcal{H}}(\mathrm{Comm}_{\Gamma}[H]/H, \mathrm{H}^{q}_{\mathcal{F}[H] \cap \pi^{-1}(-)}(\pi^{-1}(-),M)) \Longrightarrow \mathrm{H}^{p+q}_{\mathcal{F}[H]}(\mathrm{Comm}_{\Gamma}[H],M)                \]
for $M \in \mbox{Mod-}\mathcal{O}_{\mathcal{F}[H]}\mathrm{Comm}_{\Gamma}[H]$, associated to extension (\ref{eq: comm ext}) (see \cite[Th. 5.1]{Martinez}).
Since $\pi^{-1}(F) \in \mathcal{F}[H] $ for every finite subgroup $F$ of $\mathrm{Comm}_{\Gamma}[H]/ H$, we have $E_2^{p,q}(M)=0$ for all $q>1$ and $E_2^{n,0}(M)=\mathrm{H}^n_{\mathcal{H}}(\mathrm{Comm}_{\Gamma}[H]/H,M(\mathrm{Comm}_{\Gamma}[H]/\pi^{-1}(-)))$ . This implies that \[ \mathrm{H}^{n}_{\mathcal{F}[H]}(\mathrm{Comm}_{\Gamma}[H],M)= \mathrm{H}^n_{\mathcal{H}}(\mathrm{Comm}_{\Gamma}[H]/H,M(\mathrm{Comm}_{\Gamma}[H]/\pi^{-1}(-)))\] for every $M \in \mbox{Mod-}\mathcal{O}_{\mathcal{F}[H]}\mathrm{Comm}_{\Gamma}[H]$. Let $L \in  \mbox{Mod-}\mathcal{O}_{\mathcal{H}}\mathrm{Comm}_{\Gamma}[H]/H$. Then we have $L \circ \pi \in  \mbox{Mod-}_{\mathcal{O}_{\mathcal{F}}[H]}\mathrm{Comm}_{\Gamma}[H]$ and \[\mathrm{H}^{n}_{\mathcal{F}[H]}(\mathrm{Comm}_{\Gamma}[H],L \circ \pi)= \mathrm{H}^n_{\mathcal{H}}(\mathrm{Comm}_{\Gamma}[H]/H,L)\] for every $L \in  \mbox{Mod-}\mathcal{O}_{\mathcal{H}}\mathrm{Comm}_{\Gamma}[H]/H$. Therefore, we may conclude that \[\mathrm{cd}_{\mathcal{F}[H]}(\mathrm{Comm}_{\Gamma}[H])=
\underline{\mathrm{cd}}(\mathrm{Comm}_{\Gamma}[H]/H).\]

Now, let us assume that $\mathrm{Comm}_{\Gamma}[H]$ is virtually polycyclic. Let
$x_1,\ldots,x_r$ be a set of generators for $\mathrm{Comm}_{\Gamma}[H]$
and let $t$ be a generator for $H$. It follows that for every $i \in
\{1,\ldots,r\}$, we have $m_i, n_i \in \mathbb{Z} \smallsetminus \{0\}$ such
that $x_i^{-1}t^{n_i}x_i=t^{m_i}$. Lemma $5.14$(v) in
\cite{LuckWeiermann} implies that $m_i=\pm n_i$ for every  $i \in
\{1,\ldots,r\}$. Define $n= \prod_{i=1}^r n_i$. One checks that $x_i^{-1}t^{n}x_i=t^{\pm n}$ for all $i
\in \{1,\ldots,r\}$. This shows that $\langle t^n \rangle$ is a normal
subgroup of $\mathrm{Comm}_{\Gamma}[H]$, and finishes the proof.
\end{proof}
\begin{lemma} \label{lemma: locally finite by virtually cyclic} Let $H$
be an infinite cyclic subgroup of a group $\Gamma$. If
$\mathrm{Comm}_{\Gamma}[H]$ is countable locally finite-by-virtually
cyclic, then $\mathrm{Comm}_{\Gamma}[H]$ is locally virtually cyclic and $\mathcal{F}[H]$ is the family of virtually cyclic subgroups of $\mathrm{Comm}_{\Gamma}[H]$. In particular, we have that $\mathrm{gd}_{\mathcal{F}[H]}(\mathrm{Comm}_{\Gamma}[H])\leq 1$.
\end{lemma}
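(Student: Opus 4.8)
The plan is to first prove the structural claim that $G:=\mathrm{Comm}_{\Gamma}[H]$ is locally virtually cyclic, and then read off the identification of $\mathcal{F}[H]$ and the dimension bound. Write $H=\langle t\rangle$ and fix a normal locally finite subgroup $L\trianglelefteq G$ with $G/L$ virtually cyclic; note $L\cap H=1$, since the only locally finite subgroup of an infinite cyclic group is trivial. Since adjoining $H$ to a finitely generated subgroup keeps it finitely generated and preserves the conclusion ``virtually cyclic'' for subgroups, it suffices to show that every finitely generated $K\leq G$ with $H\leq K$ is virtually cyclic. Put $F=K\cap L$, a locally finite normal subgroup of $K$ with $K/F$ virtually cyclic; as $H\leq K$ is infinite and $H\cap F=1$, the quotient $K/F$ is \emph{infinite} virtually cyclic. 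Replacing $K$ by the preimage of a normal infinite cyclic finite-index subgroup of $K/F$ (and $H$ by the infinite cyclic group $H\cap K'$, using that $\mathrm{Comm}_{K'}[H\cap K']=\mathrm{Comm}_{K}[H]\cap K'=K'$), we may assume $K=F\rtimes\langle s\rangle$ with $s$ of infinite order and the image $\bar H$ of finite index in $\langle\bar s\rangle\cong\mathbb Z$. It is then enough to prove $F$ is finite.

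The key step is to produce a central infinite cyclic subgroup of $K$. Write $t=f_0s^m$ with $f_0\in F$, $m\neq 0$. Reducing modulo $F$ turns the ``$s$-exponent'' of an element into an honest integer, and conjugation in $K$ preserves it. Concretely, for any $x\in K=\mathrm{Comm}_{K}[H]$ there are nonzero integers $a,b$ with $x^{-1}t^ax=t^b$; projecting to $K/F=\langle\bar s\rangle$ gives $\bar s^{am}=\bar s^{bm}$, hence $a=b$ and $x^{-1}t^ax=t^a$. Thus each element of $K$ centralizes some nonzero power of $t$. Choosing generators $x_1,\dots,x_r$ of $K$, picking $a_j\neq 0$ with $x_j^{-1}t^{a_j}x_j=t^{a_j}$, and setting $A=a_1\cdots a_r$, we get $t^A\in Z(K)$, an infinite cyclic central subgroup.

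Now set $M=K/\langle t^A\rangle$, which is finitely generated. Its subgroup $\bar F$ (the image of $F$) is locally finite and normal — locally finite because $F\cap\langle t^A\rangle\subseteq F\cap H\subseteq L\cap H=1$, so $\bar F\cong F$. Moreover $M/\bar F=K/F\langle t^A\rangle$ is finite, because $F\langle t^A\rangle/F=\langle\bar s^{Am}\rangle$ has finite index $|Am|$ in $K/F$. Hence $\bar F$ has finite index in the finitely generated group $M$, so $\bar F$ is finitely generated, hence finite; therefore $M$ is finite, $\langle t^A\rangle$ has finite index in $K$, and $K$ is virtually infinite cyclic. This proves $G$ is locally virtually cyclic. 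The main obstacle is exactly this argument: without the locally-finite-by-virtually-cyclic hypothesis one only gets $x^{-1}t^ax=t^b$ with possibly $a\neq b$ (as in Baumslag--Solitar groups, which are commensurators of $\mathbb Z$ but not locally virtually cyclic), and the conclusion fails; the reduction to a semidirect product $F\rtimes\mathbb Z$ together with the exponent-preservation observation is precisely what rules this out.

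For the remaining assertions, let $V$ be any infinite virtually cyclic subgroup of $G$. Then $\langle V,H\rangle$ is finitely generated, hence virtually cyclic by the above, so $|V\cap H|=\infty$ by \cite[Lemma 3.1]{DP}; thus $V\sim H$. Together with the description of $\mathcal{F}[H]$ recalled before the lemma (finite subgroups of $G$, plus infinite virtually cyclic subgroups equivalent to $H$), this shows that $\mathcal{F}[H]$ is exactly the family of virtually cyclic subgroups of $G$. Finally, every finitely generated subgroup $P\leq G$ is virtually cyclic, so $\mathcal{F}[H]\cap P$ is the family of all subgroups of $P$ and a point is a $0$-dimensional model for $E_{\mathcal{F}[H]\cap P}P$; since $G$ is countable, Corollary \ref{cor: useful combination}(i) (with $d=0$ and $n=0$) yields a model for $E_{\mathcal{F}[H]}G$ of dimension $1$, i.e. $\mathrm{gd}_{\mathcal{F}[H]}(\mathrm{Comm}_{\Gamma}[H])\leq 1$.
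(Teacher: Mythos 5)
Your proof is correct, but it takes a genuinely different route from the paper. The paper's own argument is a short reduction: it passes to the finite-index subgroup $\pi^{-1}(S)\cong L\rtimes H$ of $\mathrm{Comm}_{\Gamma}[H]$ (where $L$ is the locally finite kernel and $S=\pi(H)$) and then quotes Propositions 3.4(a), 3.4(b) and 4.3 of \cite{DP} for all three conclusions, so the real work is outsourced to the earlier paper. You instead prove everything from scratch: the heart of your argument is the central-power trick --- for a finitely generated $K\leq\mathrm{Comm}_{\Gamma}[H]$ containing $H$, after passing to a finite-index subgroup of the form $F\rtimes\langle s\rangle$ with $F=K\cap L$ you observe that commensuration $x^{-1}t^ax=t^b$ forces $a=b$ because the $s$-exponent is conjugation-invariant modulo $F$, which yields a central power $t^A$; quotienting by $\langle t^A\rangle$ then traps $F$ as a finitely generated locally finite, hence finite, group, so $K$ is virtually cyclic. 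This parallels the paper's own Lemma \ref{lemma: key lemma} and the virtually polycyclic case of Lemma \ref{lemma: virt poly} rather than the cited results of \cite{DP}. Your identification of $\mathcal{F}[H]$ with the family of virtually cyclic subgroups (via \cite[Lemma 3.1]{DP} applied to $\langle V,H\rangle$) and the bound $\mathrm{gd}_{\mathcal{F}[H]}(\mathrm{Comm}_{\Gamma}[H])\leq 1$ via Corollary \ref{cor: useful combination}(i) with $d=0$, $n=0$ are both valid; the latter replaces the appeal to \cite[Prop.\ 4.3]{DP}. What each approach buys: the paper's proof is shorter but opaque without \cite{DP}, while yours is self-contained (modulo the elementary equivalence fact in virtually cyclic groups and Corollary \ref{cor: useful combination}), at the cost of redoing a structural argument that essentially reproves the relevant part of \cite[Prop.\ 3.4]{DP}. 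All the small reductions you make (replacing $K$ by the preimage $K'$ of a finite-index infinite cyclic subgroup of $K/F$, replacing $H$ by $H\cap K'$, and using $\mathrm{Comm}_{K'}[H\cap K']=K'$) are justified by the basic commensurator properties (i) and (ii) listed in Section 3, so I see no gap.
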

\begin{proof}
By assumption, there is an extension
\[ 1 \rightarrow L \rightarrow \mathrm{Comm}_{\Gamma}[H]
\xrightarrow{\pi}  V \rightarrow 1,\]
where $L$ is countable locally finite and $V$ is virtually cyclic. Note that
$\pi$ maps $H$ isomorphically onto a infinite cyclic subgroup $S$ of
$V$. Hence, $\pi^{-1}(S)$ is a finite index subgroup of
$\mathrm{Comm}_{\Gamma}[H]$ that is isomorphic to a semi-direct product
$L \rtimes H$. It follows from Proposition $3.4.$(b) of \cite{DP} that the
family $\mathcal{F}[H]\cap \pi^{-1}(S)$ coincides with the family of virtually cyclic
subgroups of $\pi^{-1}(S)$. Hence, $\mathcal{F}[H]$ is the family of virtually cyclic subgroups of $\mathrm{Comm}_{\Gamma}[H]$.
The remaining statements now follow from considering the finite index subgroup $\pi^{-1}(S)$ of $\mathrm{Comm}_{\Gamma}[H]$, Proposition $3.4.$(a) and Proposition $4.3.$ in \cite{DP}
\end{proof}
We are ready to prove Theorem A.
\begin{proof}[Proof of Theorem A]
First note that the general case follows from the countable case and Corollary \ref{cor: useful combination}. Let $\Gamma$ be a countable elementary amenable group of finite Hirsch length $h$. If $h=0$ then $\Gamma$ is locally finite, in which case $\underline{\underline{\mathrm{gd}}}(\Gamma)=\underline{\mathrm{gd}}(\Gamma)\leq 1$ by Theorem \ref{th: fin gen subgroups}. Now, assume that $h\geq 1$. Let $\mathcal{S}$ be the set of infinite virtually cyclic subgroups of $\Gamma$ and equip $\mathcal{S}$ with the equivalence relation discussed in Section 2. Let $[H]$ be the equivalence class represented by $H \in \mathcal{S}$ and denote the set of equivalence classes by $[\mathcal{S}]$. Let $\mathcal{I}$ be a complete set of representatives $[H]$ of the $\Gamma$-orbits of $[\mathcal{S}]$ induced by conjugation. In \cite{FloresNuc}, Flores and Nucinkis proved that $\underline{\mathrm{cd}}(\Gamma) \leq h+1$. If $h=1$, this implies that $\underline{\mathrm{gd}}(\Gamma) \leq 3=h+2$. If $h \geq 2$ however, we can conclude that
$\underline{\mathrm{gd}}(\Gamma) \leq h+1$. Because of this discrepancy, we need to distinguish between  two cases.

\textbf{Case 1: $\mathbf{h=1}$.} We claim that there exists a model for $E_{\mathcal{F}[H]}(\mathrm{Comm}_{\Gamma}[H])$ of dimension $1$ and there is a model for $\underline{E}(\mathrm{Comm}_{\Gamma}[H])$ of dimension $2$, for each $[H] \in \mathcal{I}$.
To this end, consider $[H] \in \mathcal{I}$. Let $K$ be a finitely generated subgroup of $\mathrm{Comm}_{\Gamma}[H]$ that contains $H$. It follows from Theorem \ref{th: struc elem am} that $K$ is countable locally finite-by-virtually cyclic. Lemma \ref{lemma: locally finite by virtually cyclic} now implies that $K$ is virtually cyclic. Hence, $\mathrm{Comm}_{\Gamma}[H]$ is locally virtually cyclic and the family $\mathcal{F}[H]$ coincides with the family of virtually cyclic subgroups of $\mathrm{Comm}_{\Gamma}[H]$.
Applying Corollary \ref{cor: useful combination}, our claims follow. Theorem \ref{th: push out} now implies that $\underline{\underline{\mathrm{gd}}}(\Gamma)\leq 3=h+2$.

\textbf{Case 2: $\mathbf{h\geq 2}$.} We claim that in this case, there exists a model for $E_{\mathcal{F}[H]}(\mathrm{Comm}_{\Gamma}[H])$ of dimension $h+1$, for each $[H] \in \mathcal{I}$.
To prove this, consider $[H] \in \mathcal{I}$ and denote $G=\mathrm{Comm}_{\Gamma}[H]$. It follows from Theorem \ref{th: struc elem am} that there exists a countable locally finite group $\Lambda$, a torsion-free nilpotent group $N$ and a finitely generated virtually abelian group $A$ that fit into extensions
\[ 1 \rightarrow \Lambda \rightarrow \overline{N} \rightarrow N \rightarrow 1
\;\; \mbox{ and } \;\; 1 \rightarrow \overline{N} \rightarrow G \xrightarrow{\pi} A \rightarrow 1.\]
We will now consider two separate cases.

\textbf{Case 2.a: $\mathbf{\pi(H)}$ is a finite subgroup of $\mathbf{A}$.} In this case, $H$ has an infinite cyclic subgroup that is contained in $\overline{N}$, hence we may as well assume that $H$ is infinite cyclic and contained in $\overline{N}$.
Note that every group in $\mathcal{F}[H]$ is mapped onto a finite subgroup of $A$ by $\pi$. Using Corollary \ref{cor: gd ext bounds}, we obtain
\[      \mathrm{gd}_{\mathcal{F}[H]}(G) \leq \underline{\mathrm{gd}}(A)+  \sup_{F \subseteq A, |F|<\infty}   \{\mathrm{gd}_{\mathcal{F}[H]\cap \pi^{-1}(F)}(\pi^{-1}(F))\}.                                  \]
Let $F$ be a finite subgroup of $A$ and denote $G_0=\pi^{-1}(F)$. Note that $G_0$ contains $H$, that $G_0=\mathrm{Comm}_{G_0}[H]$ and that $\mathcal{F}[H]\cap G_0$ consists of the finite subgroups of $G_0$ and the infinite virtually cyclic subgroups of $G_0$ that are equivalent to $H$. Let $K$ be a finitely generated subgroup of $G_0$ that contains $H$. It follows from Proposition \ref{prop: loc fin by virt poly} that there is a short exact sequence
\[ 1 \rightarrow D \rightarrow K \xrightarrow{p}  Q \rightarrow 1, \]
with $D$ countable locally finite and $Q $ virtually polycyclic of Hirsch length at most $h(N)$.
Note that $p(H)$ is an infinite cyclic subgroup of $Q$. Let $\mathcal{H}$ be the family of subgroups of $Q$ that contains all finite subgroup of $Q$ and all infinite virtually cyclic subgroups of $Q$ that are equivalent to $p(H)$. Corollary \ref{cor: gd ext bounds} implies that
\[\mathrm{gd}_{\mathcal{F}[H]\cap K}(K)\leq \mathrm{gd}_{\mathcal{H}}Q+ \sup_{S \in \mathcal{H}}\{\mathrm{gd}_{\mathcal{F}[H]\cap p^{-1}(S)}(p^{-1}(S))\}. \]
Furthermore, it follows from Lemma \ref{lemma: virt poly} that \[\mathrm{gd}_{\mathcal{H}}Q \leq \underline{\mathrm{gd}}(Q/p(H)) \leq h(N)-1\] and by Lemma \ref{lemma: locally finite by virtually cyclic} that \[\sup_{S \in \mathcal{H}}\{\mathrm{gd}_{\mathcal{F}[H]\cap p^{-1}(S)}(p^{-1}(S))\} \leq 1.\]
We can therefore conclude that  $\mathrm{gd}_{\mathcal{F}[H]\cap K}(K)\leq h(N)$. This implies that $\mathrm{gd}_{\mathcal{F}[H]\cap G_0}(G_0)\leq h(N)+1$, since every finitely generated subgroup of $G_0$ is contained in a finitely generated subgroup of $G_0$ that contains $H$. It follows that \[\max_{F \subseteq  A, |F|<\infty}   \{\mathrm{gd}_{\mathcal{F}[H]\cap \pi^{-1}(F)}(\pi^{-1}(F))\} \leq h(N)+1. \] Since $\underline{\mathrm{gd}}(A)=h(A)$, we find that $\mathrm{gd}_{\mathcal{F}[H]}(G) \leq h+1$. \\

\textbf{Case 2.b: $\mathbf{\pi(H)}$ is an infinite subgroup of $\mathbf{A}$.} We may assume that $H$ is infinite cyclic. Denote by $\mathcal{H}$ the family of subgroups of $A$ that contains all finite subgroups of $A$ and all infinite virtually cyclic subgroups that are equivalent to $\pi(H)$. By Corollary \ref{cor: gd ext bounds},   we have
\[      \mathrm{gd}_{\mathcal{F}[H]}(G) \leq \mathrm{gd}_{\mathcal{H}}(A)+  \max_{V \in \mathcal{H}}   \{\mathrm{gd}_{\mathcal{F}[H]\cap \pi^{-1}(V)}(\pi^{-1}(V))\}.        \]
Since $\mathrm{Comm}_A[\pi(H)]=A$ and $A$ is virtually polycyclic, by Lemma \ref{lemma: virt poly}, we have \[\mathrm{gd}_{\mathcal{H}}(A)= \underline{\mathrm{gd}}(A/\pi(H))\leq h(A)-1.\]
Let $V \in \mathcal{H}$ and denote $\pi^{-1}(V)$ by $G_0$. Let $K$ be a finitely generated subgroup of $G_0$ that contains $H$. Again, by Proposition \ref{prop: loc fin by virt poly}, we have a short exact sequence
\[ 1 \rightarrow D \rightarrow K \xrightarrow{p} Q \rightarrow 1, \]
with $D$ countable locally finite and $Q$ virtually polycyclic. However, this time the Hirsch length of $Q$ is at most $h(N)+1$.
Using the same approach as before, it follows that $\mathrm{gd}_{\mathcal{F}[H]\cap K}(K)\leq h(N)+1$, and hence $\mathrm{gd}_{\mathcal{F}[H]\cap G_0}(G_0)\leq h(N)+2$.
We conclude that \[\max_{V \in \mathcal{H}}   \{\mathrm{gd}_{\mathcal{F}[H]\cap \pi^{-1}(V)}(\pi^{-1}(V))\} \leq h(N)+2\] and $\mathrm{gd}_{\mathcal{F}[H]}(G) \leq h+1$. This proves the claim.\\
\indent Since $\underline{\mathrm{gd}}(\Gamma) \leq h+1$, it follows from Theorem \ref{th: push out} that
$\underline{\underline{\mathrm{gd}}}(\Gamma)\leq h+2$.
\end{proof}
Theorem A has the following corollary.
\begin{corollary} \label{cor: cor main theorem 1} Let $\Gamma$ be a countable elementary amenable group of finite Hirsch length $h$, then
\[ \underline{\mathrm{gd}}(\Gamma)-1  \leq \underline{\underline{\mathrm{gd}}}(\Gamma)\leq \underline{\mathrm{gd}}(\Gamma)+1. \]
\end{corollary}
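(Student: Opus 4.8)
The lower bound requires nothing special: the remark following Theorem \ref{th: top spec seq} records that $\underline{\mathrm{gd}}(G)\leq\underline{\underline{\mathrm{gd}}}(G)+1$ holds for \emph{every} group $G$, and rearranging gives $\underline{\mathrm{gd}}(\Gamma)-1\leq\underline{\underline{\mathrm{gd}}}(\Gamma)$. So the content is the upper bound $\underline{\underline{\mathrm{gd}}}(\Gamma)\leq\underline{\mathrm{gd}}(\Gamma)+1$, which sharpens the estimate $\underline{\underline{\mathrm{gd}}}(\Gamma)\leq h+2$ of Theorem A.

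First I would clear the case $h=0$: then $\Gamma$ is locally finite, so $\underline{\underline{\mathrm{gd}}}(\Gamma)=\underline{\mathrm{gd}}(\Gamma)\leq 1$ by Theorem \ref{th: fin gen subgroups} and there is nothing to prove. Assume $h\geq 1$; recall from the result of Flores and Nucinkis quoted in the introduction that $h\leq\underline{\mathrm{gd}}(\Gamma)$ and $\underline{\mathrm{cd}}(\Gamma)\leq h+1$, so that $\underline{\mathrm{gd}}(\Gamma)$ is finite. The plan is to re-run the concluding step in the proof of Theorem A, namely the application of the push-out Theorem \ref{th: push out} with $\mathcal{F}$ the family of finite and $\mathcal{H}$ the family of virtually cyclic subgroups of $\Gamma$, but with the smaller integer $k=\underline{\mathrm{gd}}(\Gamma)+1$ in place of $k=h+2$; note $k\geq h+1$ since $h\leq\underline{\mathrm{gd}}(\Gamma)$. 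Keeping the notation of that proof, each representative $[H]$ has $\mathrm{N}_{\Gamma}[H]=\mathrm{Comm}_{\Gamma}[H]$, and the three hypotheses of Theorem \ref{th: push out} are checked as follows. First, restricting the $\Gamma$-action to $\mathrm{Comm}_{\Gamma}[H]$ gives, as noted in Section 2, $\mathrm{gd}_{\mathcal{F}\cap\mathrm{N}_{\Gamma}[H]}(\mathrm{N}_{\Gamma}[H])=\underline{\mathrm{gd}}(\mathrm{Comm}_{\Gamma}[H])\leq\underline{\mathrm{gd}}(\Gamma)=k-1$. Second, the proof of Theorem A establishes, in every one of its cases (Case 1 even gives the bound $1$, while Cases 2.a and 2.b give $h+1$), that $\mathrm{gd}_{\mathcal{F}[H]}(\mathrm{Comm}_{\Gamma}[H])\leq h+1\leq k$. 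Third, $\mathrm{gd}_{\mathcal{F}}(\Gamma)=\underline{\mathrm{gd}}(\Gamma)=k-1\leq k$. Theorem \ref{th: push out} then gives $\underline{\underline{\mathrm{gd}}}(\Gamma)\leq k=\underline{\mathrm{gd}}(\Gamma)+1$.

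I do not anticipate a real obstacle; the one point worth isolating is where the gain over the naive bound $\underline{\underline{\mathrm{gd}}}(\Gamma)\leq h+2\leq\underline{\mathrm{gd}}(\Gamma)+2$ comes from. It rests on two facts pulling together: the $\mathcal{F}[H]$-models for the commensurators produced in the proof of Theorem A have dimension at most $h+1$, hence at most $\underline{\mathrm{gd}}(\Gamma)+1$; while Theorem \ref{th: push out} only demands that $\underline{\mathrm{gd}}$ of those commensurators be at most $k-1$, which is automatic with $k-1=\underline{\mathrm{gd}}(\Gamma)$ by monotonicity of $\underline{\mathrm{gd}}$ under passage to subgroups. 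Together these let the push-out be applied one dimension lower than in the proof of Theorem A. Equivalently one could split into cases: if $\underline{\mathrm{gd}}(\Gamma)\geq h+1$ then Theorem A already gives $\underline{\underline{\mathrm{gd}}}(\Gamma)\leq h+2\leq\underline{\mathrm{gd}}(\Gamma)+1$, whereas if $\underline{\mathrm{gd}}(\Gamma)=h$ the push-out of Theorem A applies verbatim with $k=h+1$; the uniform choice $k=\underline{\mathrm{gd}}(\Gamma)+1$ merely absorbs both.
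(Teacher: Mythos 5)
Your proposal is correct and follows essentially the same route as the paper: the authors also deduce the upper bound by feeding the bound $\mathrm{gd}_{\mathcal{F}[H]}(\mathrm{Comm}_{\Gamma}[H])\leq h+1$ from the proof of Theorem A, together with the Flores--Nucinkis inequality $h\leq \underline{\mathrm{cd}}(\Gamma)\leq\underline{\mathrm{gd}}(\Gamma)$, into Theorem \ref{th: push out} with $k=\underline{\mathrm{gd}}(\Gamma)+1$, the lower bound being the general remark after Theorem \ref{th: top spec seq}. Your write-up merely makes explicit the verification of the three hypotheses (in particular the monotonicity $\underline{\mathrm{gd}}(\mathrm{Comm}_{\Gamma}[H])\leq\underline{\mathrm{gd}}(\Gamma)$) that the paper leaves implicit.
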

\begin{proof} In the proof of Theorem A, we showed that $\mathrm{gd}_{\mathcal{F}[H]}(\mathrm{Comm}_{\Gamma}[H])\leq h+1$ for any infinite virtually cyclic subgroup $H$ of $\Gamma$. In \cite{FloresNuc}, Flores and Nucinkis established the inequality $h\leq \underline{\mathrm{cd}}(\Gamma)$. Since this implies $h\leq \underline{\mathrm{gd}}(\Gamma)$, it follows from Theorem \ref{th: push out} that $\underline{\underline{\mathrm{gd}}}(\Gamma)\leq \underline{\mathrm{gd}}(\Gamma)+1$.
\end{proof}

\section{Examples}

In this section, we will compute $\underline{\underline{\mathrm{gd}}}(\G)$ for some elementary amenable groups $\G$ of finite Hirsch length. Our examples will illustrate that  the bounds of Corollary \ref{cor: cor main theorem 1} cannot be improved and all intermediate values can  be attained. They will also show that the bounds established in Theorem A are attained for every $h\geq 1$ and $n\geq 0$.

First, we need a proposition.
\begin{proposition}
Let $G$ be an infinite countable locally finite group and consider a semi-direct product $\Gamma=G\rtimes_{\varphi} \mathbb{Z}$. Then  $\mathrm{cd}_{\mathbb{Q}}(\Gamma)=\underline{\mathrm{gd}}(\Gamma)=2$.
\end{proposition}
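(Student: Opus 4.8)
The plan is to pin both quantities to $2$ by establishing the two inequalities $\underline{\mathrm{gd}}(\Gamma)\le 2$ and $\mathrm{cd}_{\mathbb{Q}}(\Gamma)\ge 2$. Since $\mathrm{cd}_{\mathbb{Q}}(\Gamma)\le\underline{\mathrm{cd}}(\Gamma)\le\underline{\mathrm{gd}}(\Gamma)$ (the first inequality recorded in Section~2, the second being the general bound $\mathrm{cd}_{\mathcal F}\le\mathrm{gd}_{\mathcal F}$), this gives a sandwich $2\le\mathrm{cd}_{\mathbb{Q}}(\Gamma)\le\underline{\mathrm{gd}}(\Gamma)\le 2$ and hence the claim. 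For the upper bound: as $G$ is countable and locally finite, its finitely generated subgroups are finite, so the family of finite subgroups of $G$ coincides with the family of subgroups of finitely generated subgroups, and Theorem~\ref{th: fin gen subgroups} (with $n=0$) provides a $1$-dimensional model for $\underline{E}G$. Applying Corollary~\ref{cor: gd ext bounds} to $1\to G\to\Gamma\xrightarrow{\pi}\mathbb{Z}\to 1$ with $\mathcal F$ the family of finite subgroups of $\Gamma$ and $\mathcal H=\{1\}$ the trivial family of $\mathbb{Z}$ — note $\pi(\mathcal F)=\{1\}\subseteq\mathcal H$, $\pi^{-1}(1)=G$, and $\mathcal F\cap G$ is the family of finite subgroups of $G$ — yields a model for $\underline{E}\Gamma$ of dimension $1+\mathrm{gd}_{\{1\}}(\mathbb{Z})=1+1=2$.

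\textbf{Lower bound.} I would argue by contradiction using Dunwoody's theorem \cite{Dunwoody}: if $\mathrm{cd}_{\mathbb{Q}}(\Gamma)\le 1$, then $\Gamma$ acts on a tree $T$ with finite stabilizers. Since $G$ is infinite and locally finite, it cannot fix a vertex of $T$ (that would embed $G$ in a finite stabilizer), so a standard fact about locally finite groups acting on trees shows $G$ fixes a \emph{unique} end $\xi$ of $T$. Because $G$ is normal in $\Gamma$, the group $\Gamma$ permutes the singleton set of $G$-fixed ends and hence also fixes $\xi$; let $h\colon\Gamma\to\mathbb{Z}$ be the associated Busemann homomorphism. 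Its kernel consists of the elements fixing a subray pointing to $\xi$, hence is a directed union of finite groups; since $\Gamma$ contains an infinite cyclic group, $h\not\equiv 0$, and since $h$ kills the torsion group $G$ while $\ker h/G$ is a torsion subgroup of $\Gamma/G\cong\mathbb{Z}$, we get $\ker h=G$. Fix a ray $\rho=(v_0,v_1,v_2,\dots)$ representing $\xi$ and put $G^{(n)}=G\cap\mathrm{Stab}_T(v_n)$. Each $G^{(n)}$ is finite; one has $G^{(n)}\subseteq G^{(n+1)}$ (an element of $G$ fixing $\xi$ and $v_n$ fixes the whole geodesic ray from $v_n$ to $\xi$); and $\bigcup_n G^{(n)}=G$ (every $g\in G=\ker h$ fixes $v_n$ for $n$ large). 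Choosing $t\in\Gamma$ with $m:=h(t)>0$, for $n$ large one has $t\cdot v_n=v_{n-m}$, so
\[
tG^{(n)}t^{-1}=G\cap\mathrm{Stab}_T(v_{n-m})=G^{(n-m)},
\]
whence $|G^{(n)}|=|G^{(n-m)}|$ for all large $n$. Combined with the monotonicity of $(|G^{(n)}|)_n$, this forces the sequence to be eventually constant, so $G=\bigcup_n G^{(n)}$ is finite, contradicting the hypothesis that $G$ is infinite. Hence $\mathrm{cd}_{\mathbb{Q}}(\Gamma)\ge 2$.

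\textbf{Main obstacle and an alternative.} The delicate point is the lower bound, and within it the step excluding that $\Gamma$ fixes an end of $T$: one must play the locally finite structure of $G$ (which makes the vertex stabilizers $|G^{(n)}|$ grow without bound along the ray) against the translation coming from the $\mathbb{Z}$-quotient (which makes those orders eventually periodic). If one prefers to avoid invoking Dunwoody's theorem, the same lower bound can be approached homologically: the Lyndon--Hochschild--Serre spectral sequence of $1\to G\to\Gamma\to\mathbb{Z}\to 1$ over $\mathbb{Q}$ collapses, because $\mathrm{cd}_{\mathbb{Q}}(\mathbb{Z})=1$ and $\mathrm{cd}_{\mathbb{Q}}(G)=1$ (as $G$ is infinite locally finite, $\mathbb{Q}$ is not $\mathbb{Q}G$-projective), giving $H^2(\Gamma;M)\cong H^1(G;M)_{\mathbb{Z}}$ for every $\mathbb{Q}\Gamma$-module $M$; one then has to exhibit a single $\mathbb{Q}\Gamma$-module $M$ with $H^1(G;M)_{\mathbb{Z}}\ne 0$, which is the same difficulty in algebraic disguise.
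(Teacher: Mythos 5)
Your proof is correct, and while the upper bound $\underline{\mathrm{gd}}(\Gamma)\le 2$ is obtained exactly as in the paper (Theorem \ref{th: fin gen subgroups} with $n=0$ plus Corollary \ref{cor: gd ext bounds}), your lower bound argument takes a genuinely different route after the common first step. Both you and the paper assume $\mathrm{cd}_{\mathbb{Q}}(\Gamma)\le 1$ and invoke Dunwoody's Theorem 1.1 to get an action of $\Gamma$ on a tree $T$ with finite vertex stabilizers; but the paper then also uses Dunwoody's Corollary 1.2 to conclude that $\Gamma$ is locally virtually cyclic, deduces that every $g\in G$ commutes with some power $t^l$, applies Serre's Proposition 24 to obtain the translation axis $L$ of $t$, and shows $gL=L$ for all $g$, so that $\Gamma$ acts on a line with finite vertex stabilizers and is therefore virtually cyclic, a contradiction. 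You instead exploit only the tree action itself: since $G$ is infinite, countable, locally finite and all vertex stabilizers are finite, $G$ fixes a unique end $\xi$ (existence via the nested fixed subtrees of an exhausting chain of finite subgroups, uniqueness because fixing two ends would force $G$ to fix the connecting line pointwise, hence sit inside a finite vertex stabilizer); normality of $G$ then makes $\xi$ a $\Gamma$-fixed end, the Busemann homomorphism $h\colon\Gamma\to\mathbb{Z}$ has kernel exactly $G$, and the monotonicity-versus-periodicity argument on the finite groups $G\cap\mathrm{Stab}_T(v_n)$ along a ray to $\xi$ forces $G$ to be finite, a contradiction. Your version buys independence from Dunwoody's accessibility corollary and from Serre's Proposition 24, replacing them with standard facts about elliptic elements and fixed ends, and it makes visible exactly where local finiteness of $G$ versus the $\mathbb{Z}$-quotient collide; the paper's version is shorter given its citations and yields the slightly stronger intermediate statement that such a $\Gamma$ of $\mathrm{cd}_{\mathbb{Q}}\le 1$ would be virtually cyclic. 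The only point you should make explicit if you write this up is that the \emph{uniqueness} of the $G$-fixed end (not just its existence) relies on the finiteness of the vertex stabilizers, since it is precisely this uniqueness that lets you promote the fixed end from $G$ to all of $\Gamma$.
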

\begin{proof}
 Corollary \ref{cor: gd ext bounds}  implies that $\underline{\mathrm{gd}}(\Gamma)\leq 2$. On the other hand, we have $\mathrm{cd}_{\mathbb{Q}}(\Gamma)\leq \underline{\mathrm{cd}}(\Gamma)\leq \underline{\mathrm{gd}}(\Gamma)$. Therefore, it suffices to show that $\mathrm{cd}_{\mathbb{Q}}(\Gamma)\geq 2$.

Suppose, by a way of contradiction, that $\mathrm{cd}_{\mathbb{Q}}(\Gamma)\leq 1$. By Theorem 1.1 of \cite{Dunwoody}, $\Gamma$ acts on a tree $T$ without edge inversions and with finite vertex stabilizers. Moreover, Corollary 1.2 of \cite{Dunwoody} implies that $\Gamma$ is locally virtually cyclic. Now, fix a generator $t$ of $\mathbb{Z}$ such that $\Gamma=G\rtimes_{\varphi} \langle t \rangle$. Using the fact that $\Gamma$ is locally virtually cyclic, one can check that for each $g \in G$ there exists an $l \in \mathbb{N}_1$ such that $\varphi^l(g)=g$. Hence, for each $g \in G$ there exists an $l \in \mathbb{N}_1$ such that $t^l$ commutes with $g$ in $\Gamma$. Since $\langle t \rangle$ acts freely on the tree $T$, it follows from Proposition 24 of \cite{SerreTrees}, that $T$ contains an infinite straight line $L$ on which $t$ acts by translation with fixed amplitude. Moreover, if for some $k \in \mathbb{N}_1$, a subtree of $T$ is invariant under the action of $\langle t^k \rangle$, then it must contain $L$. Now, let $g \in G$ and choose $l \in \mathbb{N}_1$ such that $t^l$ commutes with $g$. Since $t^l$ and $g$ commute, the line $gL$ is invariant under the action of $\langle t^l \rangle$. It follows that $L$ is contained in $gL$, so we must have $gL=L$. This shows that $\Gamma$ acts on $L$. Since all vertex stabilizers are finite, $\Gamma$ has a finite normal subgroup $N$ such that $\Gamma/N$ is isomorphic to a discrete subgroup of the isometries of the real line. Because every discrete  subgroup of the isometry group of the real line is isomorphic to a subgroup of the infinite dihedral group, we conclude that $\Gamma$ is virtually cyclic. This contradicts the fact that $G$ is infinite locally finite. Therefore, we must have $\mathrm{cd}_{\mathbb{Q}}(\Gamma)\geq 2$.
\end{proof}

In the next three examples we will construct elementary amenable groups $G \rtimes_{\varphi_k} \mathbb{Z}$, for $k \in \{-1,0,1\}$ such that  $G$ is an infinite countable locally finite group and  \[\underline{\underline{\mathrm{gd}}}(G\rtimes_{\varphi_k} \mathbb{Z})= \underline{\mathrm{gd}}(G\rtimes_{\varphi_k} \mathbb{Z})+k.\]

\indent In our first example, every element $g \in G$ has a bounded orbit under the automorphism $\varphi_{-1} \in \mathrm{Aut}(G)$.
\begin{example} \rm Let $G$ be an infinite countable locally finite group and let $\Gamma$ be a semi-direct product $G \rtimes_{\varphi_{-1}} \mathbb{Z}$ that is locally bounded, meaning that for each $g \in G$, there exists some $l \in \mathbb{N}_1$ such that $\varphi_{-1}^l(g)=g$ ( e.g. $\varphi_{-1}=\mathrm{Id}$).
Since $\Gamma$ is not virtually cyclic, we have  $1  \leq  \underline{\underline{\mathrm{gd}}}(\Gamma)$. On the other hand, by  \cite[4.3]{DP}, we have $\underline{\underline{\mathrm{gd}}}(\Gamma)  \leq 1$. We conclude that $\underline{\underline{\mathrm{gd}}}(\Gamma)=1$.
\end{example}
In the second example, due to M.~Fluch, $\varphi_0 \in \mathrm{Aut}(G)$ acts freely on the conjugacy classes of elements of $G$.
\begin{example}\label{ex: wreath}  \rm Let $F$ be a non-trivial finite group and consider the infinite direct sum $G=\bigoplus_{i \in \mathbb{Z}} F$. One can form a semi-direct product $G \rtimes_{\varphi_0} \langle t \rangle$ by letting $t$ act on $G$ via the shifting automorphism (i.e. $t^l\cdot (a_i)_{i}=(a_{i-l})_{i}$ for all $l \in \mathbb{Z}$). Then $G \rtimes_{\varphi_0} \langle t \rangle$ is called the restricted wreath product of $F$ and $\mathbb{Z}$, which is denoted by $F \wr \mathbb{Z}$. In Example $18$ in \cite{Fluch}, Fluch shows that $\underline{\underline{\mathrm{gd}}}(F\wr \mathbb{Z})=2$.
\end{example}
In our third example, the automorphism $\varphi_1$ under consideration is a combination of the automorphisms of the two previous examples. This example also shows that the upper bound obtained in Theorem A of \cite{DP} is attained for $h=1$ and any $n\geq 0$.

\begin{example} \rm Let $n$ be a non-negative integer, $F$ be a non-trivial finite group, $K$ be a non-trivial finite abelian group. Take the infinite direct sums $A=\bigoplus_{i \in \mathbb{Z}} F$ and $B=\bigoplus_{\aleph_n} K$.
As before, consider the restricted wreath product $F\wr \mathbb{Z}=A\rtimes \mathbb{Z}$. Let $\Gamma=(F\wr \mathbb{Z})\oplus B$. Clearly, $\Gamma$ is an elementary amenable group of Hirsch length $h=1$ and cardinality $\aleph_n$. Next, we will prove that $\underline{\underline{\mathrm{gd}}}(\Gamma) =n+3$.

\begin{proof} First, we note that $\mbox{cd}_{\mathbb Q}(B)=n+1$ (see  \cite[6.10]{DicksKrophollerLearyThomas}). Let $M$ be a $\mathbb Q[B]$-module such that $\mathrm{H}^{n+1}(B,M)\neq 0$.  We can turn $M$ into a $\Gamma$-module, via projection of $\Gamma$ onto $B$. Let $C$ be an infinite cyclic subgroup of $F \wr \mathbb{Z}$. We claim that the map \[\mathrm{H}^{n+2}(\Gamma,M) \rightarrow  \mathrm{H}^{n+2}(\mathrm{Comm}_{\Gamma}[C] ,M),\] induced by the inclusion $\mathrm{Comm}_{\Gamma}[C] \subseteq \Gamma$, is injective.

First of all, note that we may assume that $C$ is maximal infinite cyclic. One can easily check that $\mathrm{Comm}_{\Gamma}[C]=\mathrm{Comm}_{F \wr \mathbb{Z}}[C]\oplus B$. It follows from Proposition 3.4.(a) of \cite{DP}, that $\mathrm{Comm}_{F \wr \mathbb{Z}}[C]$ is infinite cyclic. Since $\mathrm{Comm}_{F \wr \mathbb{Z}}[C]$ contains $C$ and $C$ is maximal infinite cyclic, we have $\mathrm{Comm}_{F \wr \mathbb{Z}}[C]=C$. We conclude that  $\mathrm{Comm}_{\Gamma}[C]=C\oplus B$. It now follows that $\Gamma$ has a finite index subgroup isomorphic to $(A \rtimes C) \oplus B$. A standard transfer-restriction argument now implies that the map \[\mathrm{H}^{n+2}(\Gamma,M) \rightarrow  \mathrm{H}^{n+2}((A \rtimes C) \oplus B,M),\] induced by the inclusion $(A \rtimes C) \oplus B \subseteq \Gamma$, is injective. Therefore, to prove the claim it suffices to show that the map \[\mathrm{H}^{n+2}((A \rtimes C) \oplus B,M) \rightarrow  \mathrm{H}^{n+2}(\mathrm{Comm}_{\Gamma}[C] ,M),\] induced by the inclusion $\mathrm{Comm}_{\Gamma}[C]=C\oplus B \subseteq (A \rtimes C) \oplus B $ is injective.

Let $T$ be a $\mathbb Q[A\rtimes C]$-module  with trivial $A$-action. Since $T$ is torsion-free and $A$ is a torsion group that acts trivially on $T$, we have $\mathrm{H}^1(A, T)=0$. Using this together with the fact that $\mathrm{cd}_{\mathbb{Q}}(C)$ and $\mathrm{cd}_{\mathbb{Q}}(A)$ are both $1$, an application of the Lyndon-Hochschild-Serre spectral sequence  associated to the extension \[1 \rightarrow A \rightarrow A \rtimes C \rightarrow C \rightarrow 1,\] shows that $\mathrm{H}^2(A \rtimes C, T)=0$ for every $\mathbb Q[A\rtimes C]$-module $T$ with trivial $A$-action. These observations, together with the fact that $\mathrm{cd}_{\mathbb{Q}}(B)= n+1$ and the naturality between the Lyndon-Hochschild-Serre spectral sequences with coefficients in $M$ associated to the extensions \[1 \rightarrow B \rightarrow (A \rtimes C)\oplus B \rightarrow A \rtimes C \rightarrow 1\] and \[1 \rightarrow B \rightarrow \mathrm{Comm}_{\Gamma}[C] \rightarrow C \rightarrow 1,\] yield a commutative diagram
\[ \xymatrix{  \mathrm{H}^{n+2}((A \rtimes C) \oplus B,M) \ar[r] \ar[d]^{\cong} & \mathrm{H}^{n+2}(\mathrm{Comm}_{\Gamma}[C] ,M) \ar[d]^{\cong} \\
                \mathrm{H}^1(A \rtimes C,\mathrm{H}^{n+1}(B,M)) \ar[r] &   \mathrm{H}^{1}(C,\mathrm{H}^{n+1}(B,M)),} \]
where the horizontal maps are induced by the inclusions $C \subset A \rtimes C$ and  $\mathrm{Comm}_{\Gamma}[C] \subseteq (A \rtimes C)\oplus B$ and the vertical maps are isomorphisms arising form the spectral sequences. The Lyndon-Hochschild-Serre spectral sequence with coefficients in $\mathrm{H}^{n+1}(B,M)$ associated to the extension \[1 \rightarrow A \rightarrow A \rtimes C \rightarrow C \rightarrow 1\] implies that we have a short exact sequence
\[ 0 \rightarrow \mathrm{H}^1(C,\mathrm{H}^{n+1}(B,M)) \rightarrow  \mathrm{H}^1(A \rtimes C,\mathrm{H}^{n+1}(B,M)) \rightarrow  \mathrm{H}^0(C,\mathrm{H}^1(A,\mathrm{H}^{n+1}(B,M))) \rightarrow 0.\]
Since $\mathrm{H}^{n+1}(B, M)$ is torsion-free and $A$ is a torsion group that acts trivially on $\mathrm{H}^{n+1}(B,M)$, we have $\mathrm{H}^1(A,\mathrm{H}^{n+1}(B,M))=0$ and therefore \[ \mathrm{H}^1(A \rtimes C,\mathrm{H}^{n+1}(B,M))\cong \mathrm{H}^1(C,\mathrm{H}^{n+1}(B,M)).\] Using this, we obtain a commutative diagram
\[ \xymatrix{  \mathrm{H}^{n+2}((A \rtimes C) \oplus B,M) \ar[r] \ar[d]^{\cong} & \mathrm{H}^{n+2}(\mathrm{Comm}_{\Gamma}[C] ,M) \ar[d]^{\cong} \\
                \mathrm{H}^1(C,\mathrm{H}^{n+1}(B,M)) \ar[r]^{\mathrm{Id}} &   \mathrm{H}^1(C,\mathrm{H}^{n+1}(B,M)).} \]
that proves our claim.

Since $\mathrm{H}^{\ast}(F,M)=0$ for all finite subgroups $F$ of $\Gamma$, we may conclude from Theorem $6.1$ in \cite{Martinez} that there are natural isomorphisms $$\mathrm{H}^{\ast}_{\mathcal{F}}(\Gamma,M^{-})\cong \mathrm{H}^{\ast}(\Gamma,M)$$ and $$\mathrm{H}^{\ast}_{\mathcal{F}\cap \mathrm{Comm}_{\Gamma}[C]}(\mathrm{Comm}_{\Gamma}[C],M^{-})\cong \mathrm{H}^{\ast}(\mathrm{Comm}_{\Gamma}[C],M).$$ Hence, for every infinite cyclic subgroup $C$ of $F \wr \mathbb{Z}$, the map
\[ i^{n+2}(C): \mathrm{H}^{n+2}_{\mathcal{F}}(\Gamma,M^{-}) \rightarrow \mathrm{H}^{n+2}_{\mathcal{F}\cap \mathrm{Comm}_{\Gamma}[C]}(\mathrm{Comm}_{\Gamma}[C],M^{-}) \]
is injective and \[\mathrm{H}^{n+2}_{\mathcal{F}\cap \mathrm{Comm}_{\Gamma}[C]}(\mathrm{Comm}_{\Gamma}[C],M^{-})\cong  \mathrm{H}^1(C,\mathrm{H}^{n+1}(B,M))\cong \mathrm{H}^{n+1}(B,M)\neq 0.\]
\indent Next, observe that $\mbox{cd}_{\mathbb Q}(\Z\oplus B)=n+2$ and since $\Z\oplus B$  is a subgroup of $\Gamma$, it follows that $\mbox{cd}_{\mathbb Q}(\Gamma)\geq n+2$. But, by Corollary \ref{cor: useful combination}, we have that $\underline{\mbox{gd}}(\Gamma)\leq n+2$. Therefore, $\underline{\mbox{gd}}(\Gamma)=n+2$.

We also note that from Corollary \ref{cor: useful combination}(ii) and  the proof of Theorem A (Case 1: $h=1$) that $\mathrm{gd}_{\mathcal{F}[H]}(\mathrm{Comm}_{\Gamma}[H])\leq n+1$ for all infinite virtually cyclic subgroups $H$ of $\Gamma$.\\
\indent Let $\mathcal{H}$ be the family of virtually cyclic subgroups of $\Gamma$. Proposition \ref{prop: mayer-vietoris} gives us an exact sequence
\[ \mathrm{H}^{n+2}_{\mathcal{F}}(\Gamma, M^{-}) \xrightarrow{\prod i^{n+2}(H)} \prod_{[H] \in \mathcal{I}}  \mathrm{H}^{n+2}_{\mathcal{F}\cap \mathrm{Comm}_{\Gamma}[H]}(\mathrm{Comm}_{\Gamma}[H],M^{-}) \rightarrow  \mathrm{H}^{n+3}_{\mathcal{H}}(\Gamma,M^{-}) \rightarrow 0. \]
Since the set $\mathcal{I}$ contains more than one element represented by an infinite cyclic subgroup of $F \wr \mathbb{Z}$, the map $\prod i^{n+2}(H)$ cannot be surjective because $i^{n+2}(H)$ is injective when $H$ is a subgroup $F \wr \mathbb{Z}$. Since $$ \prod_{[H] \in \mathcal{I}}  \mathrm{H}^{n+2}_{\mathcal{F}\cap \mathrm{Comm}_{\Gamma}[H]}(\mathrm{Comm}_{\Gamma}[H],M^{-})\neq 0,$$ it follows that $\mathrm{H}^{n+3}_{\mathcal{H}}(\Gamma,M^{-})\neq 0$. Therefore, we have $\underline{\underline{\mathrm{gd}}}(\Gamma)\geq n+3$. Theorem A now implies that $\underline{\underline{\mathrm{gd}}}(\Gamma)= n+3$.\end{proof}
\end{example}

It is well-known and not difficult to see that, for any two groups $\Gamma_1$ and $\Gamma_2$, one has  \[\underline{\mathrm{gd}}(\Gamma_1 \times \Gamma_2) \leq  \underline{\mathrm{gd}}(\Gamma_1) + \underline{\mathrm{gd}}(\Gamma_2).\] When considering the family of virtually cyclic subgroups, the bounds are not as clear when taking products. As pointed out in \cite{LuckWeiermann}, Corollary $5.6$ and Remark $5.7$, the best one can do in general is
\[ \underline{\underline{\mathrm{gd}}}(\Gamma_1 \times \Gamma_2) \leq  \underline{\underline{\mathrm{gd}}}(\Gamma_1) + \underline{\underline{\mathrm{gd}}}(\Gamma_2)+3.\]
Next, we will consider two examples that will again illustrate the somewhat strange behaviour of this geometric dimension when taking products of groups. 
\begin{example} \label{ex: direct sum}\rm   {Let $F$ be a non-trivial finite abelian group. Then for any integers $n\geq 0$ and $h\geq 2$, we have $\underline{\underline{\mathrm{gd}}}\Big((\bigoplus_{\aleph_n} F) \oplus \mathbb{Z}^h\Big)=n+h+2$ and $\underline{\underline{\mathrm{gd}}}\Big((\bigoplus_{\aleph_n} F) \oplus \mathbb{Z}\Big)=n+1$. }
\begin{proof}  Let $\Gamma=(\bigoplus_{\aleph_n} F)\oplus \mathbb{Z}^h$. Then $\Gamma$ is elementary amenable of Hirsch length $h$ and cardinality $\aleph_n$.   We will prove that $\underline{\underline{\mathrm{gd}}}(\Gamma) = n+h+2$.
By Theorem A, it suffices to show that $\underline{\underline{\mathrm{cd}}}(\Gamma) \geq n+h+2$.

By Corollary 6.10 of \cite{DicksKrophollerLearyThomas}, $\mbox{cd}_{\mathbb Q}(\bigoplus_{\aleph_n} F)=n+1$. So, a corner argument of the Lyndon-Hochschild-Serre spectral sequence associated to the extension with $\bigoplus_{\aleph_n} F$ as its kernel, implies  $\mathrm{cd}_{\mathbb{Q}}(\Gamma)=n+h+1$. Since $\underline{\mathrm{gd}}(\Gamma)\leq n+h+1$ (see Corollary \ref{cor: useful combination}(ii) and \cite[Corollary 4]{FloresNuc}), this implies that $\underline{\mathrm{gd}}(\Gamma)=n+h+1$.
Note that for every infinite cyclic subgroup $H$ of $\Gamma$,  $\mathrm{Comm}_{\Gamma}[H]=\Gamma$. Also, by Lemma \ref{lemma: virt poly}, we have that $\mathrm{gd}_{\mathcal{F}[H]}(\mathrm{Comm}_{\Gamma}[H])\leq n+h.$

Next, let $\mathcal{H}$ be the family of virtually cyclic subgroups of $\Gamma$ and let $\mathcal{F}$ be the family of finite subgroups of $\Gamma$. Since $\underline{\mathrm{gd}}(\Gamma)=n+h+1$, we can find a module $M \in  \mbox{Mod-}\mathcal{O}_{\mathcal{F}}\Gamma$ such that $ \mathrm{H}^{n+h+1}_{\mathcal{F}}(\Gamma,M)\neq 0$. Denote the induction functor from $\mbox{Mod-}\mathcal{O}_{\mathcal{F}}\Gamma$ to $\mbox{Mod-}\mathcal{O}_{\mathcal{H}}\Gamma$ by $\mathrm{ind}_{\mathcal{F}}^{\mathcal{H}}$ and note that by Lemma \ref{lemma: induction} $\mathrm{ind}_{\mathcal{F}}^{\mathcal{H}}(M)\cong M$ as $\mathcal{O}_{\mathcal{F}}\Gamma$-modules. Proposition \ref{prop: mayer-vietoris} gives an exact sequence
\begin{equation*} \label{eq: mv 3} \Big(\prod_{[H] \in \mathcal{I}} \mathrm{H}^{n+h+1}_{\mathcal{F}[H] }( \mathrm{Comm}_{\Gamma}[H],\mathrm{ind}_{\mathcal{F}}^{\mathcal{H}}(M))\Big)\oplus  \mathrm{H}^{n+h+1}_{\mathcal{F}}(\Gamma,M) \rightarrow  \prod_{[H] \in \mathcal{I}} \mathrm{H}^{n+h+1}_{\mathcal{F}\cap  \mathrm{Comm}_{\Gamma}[H] }( \mathrm{Comm}_{\Gamma}[H],M)  \end{equation*} \[
 \rightarrow \mathrm{H}^{n+h+2}_{\mathcal{H}}(\Gamma,\mathrm{ind}_{\mathcal{F}}^{\mathcal{H}}(M)) \rightarrow \Big(\prod_{[H] \in \mathcal{I}} \mathrm{H}^{n+h+2}_{\mathcal{F}[H] }( \mathrm{Comm}_{\Gamma}[H],\mathrm{ind}_{\mathcal{F}}^{\mathcal{H}}(M))\Big)\oplus  \mathrm{H}^{n+h+2}_{\mathcal{F}}(\Gamma,M)  \]
which, by the above established facts, transforms to
\begin{equation*}   \mathrm{H}^{n+h+1}_{\mathcal{F}}(\Gamma,M) \xrightarrow{\Delta} \prod_{[H] \in \mathcal{I}}\mathrm{H}^{n+h+1}_{\mathcal{F}}( \Gamma,M)
 \rightarrow \mathrm{H}^{n+h+2}_{\mathcal{H}}(\Gamma,\mathrm{ind}_{\mathcal{F}}^{\mathcal{H}}(M)) \rightarrow 0.
\end{equation*}
Since $\mathrm{H}^{n+h+1}_{\mathcal{F}}( \Gamma,M)$ is non-zero, $\Delta$ is the diagonal map and $\mathcal{I}$ contains more than one element (because $h\geq 2$), it follows that $\Delta$ cannot be surjective. This implies that $\mathrm{H}^{n+h+2}_{\mathcal{H}}(\Gamma,\mathrm{ind}_{\mathcal{F}}^{\mathcal{H}}(M))\neq 0$ and hence $\underline{\underline{\mathrm{cd}}}(\Gamma) \geq n+h+2$. We conclude that $\underline{\underline{\mathrm{gd}}}(\Gamma) = n+h+2$.

Let $G=(\bigoplus_{\aleph_n} F) \oplus \mathbb{Z}$. Note  that all infinite cyclic subgroups  of $G$ are equivalent and $G=\mathrm{Comm}_{G}[\Z]$. Using Lemma \ref{lemma: virt poly}, it follows that \[\underline{\underline{\mathrm{gd}}}(G)=\mathrm{gd}_{\mathcal{F}[\Z]}(\mathrm{Comm}_{G}[\Z])= \underline{\mbox{gd}}(\bigoplus_{\aleph_n} F).\] Since $\mbox{cd}_{\mathbb Q}(\bigoplus_{\aleph_n} F)=n+1$, applying Theorem \ref{th: fin gen subgroups} yields  $\underline{\mbox{gd}}(\bigoplus_{\aleph_n} F)=n+1$.  Hence, $\underline{\underline{\mathrm{gd}}}(G)=n+1$.
\end{proof}
\end{example}

\begin{example} \label{ex: wreath plus Z}  \rm  {Let $F$ be a non-trivial finite  group. Then $\underline{\underline{\mathrm{gd}}}\Big((F \wr \mathbb{Z}) \oplus \mathbb{Z}\Big)=3$ while $\underline{\underline{\mathrm{gd}}}(F \wr \mathbb{Z})=2$ and $\underline{\underline{\mathrm{gd}}}(\mathbb{Z})=0$.}
\begin{proof} Let $\Gamma=(F \wr \mathbb{Z}) \oplus \mathbb{Z}$. Since $\mathrm{cd}_{\mathbb{Q}}(F \wr \mathbb{Z})=2$, one can deduce that $\underline{\mathrm{gd}}(\Gamma)=3$ by showing that $\mathrm{cd}_{\mathbb{Q}}(\Gamma)=3$ using a spectral sequence argument.  We claim that $\underline{\underline{\mathrm{gd}}}(\Gamma)=3$.
Note that $\mathbb{Z}^2$ is a subgroup of $\Gamma$, therefore $\underline{\underline{\mathrm{gd}}}(\Gamma) \geq 3$. Hence, to prove the claim it suffices to show that $\underline{\underline{\mathrm{cd}}}(\Gamma) \leq 3$. Let $\mathcal{H}$ be the family of virtually cyclic subgroups of $\Gamma$ and let $\mathcal{F}$ be the family of finite subgroups of $\Gamma$. Let $M \in \mbox{Mod-}\mathcal{O}_{\mathcal{H}}\Gamma$. By Proposition \ref{prop: mayer-vietoris}, we have an exact sequence
\begin{equation} \label{eq: mv 1} \Big(\prod_{[H] \in \mathcal{I}} \mathrm{H}^{3}_{\mathcal{F}[H] }( \mathrm{Comm}_{\Gamma}[H],M)\Big)\oplus  \mathrm{H}^{3}_{\mathcal{F}}(\Gamma,M) \rightarrow  \prod_{[H] \in \mathcal{I}} \mathrm{H}^{3}_{\mathcal{F}\cap  \mathrm{Comm}_{\Gamma}[H] }( \mathrm{Comm}_{\Gamma}[H],M)  \end{equation} \[
 \rightarrow \mathrm{H}^{4}_{\mathcal{H}}(\Gamma,M) \rightarrow \Big(\prod_{[H] \in \mathcal{I}} \mathrm{H}^{4}_{\mathcal{F}[H] }( \mathrm{Comm}_{\Gamma}[H],M)\Big)\oplus  \mathrm{H}^{4}_{\mathcal{F}}(\Gamma,M)  .\]
\indent Let $H$ be an infinite cyclic subgroup of $\Gamma$. If $H$ is contained in the direct summand $\mathbb{Z}$ of $\Gamma$, then clearly $\mathrm{Comm}_{\Gamma}[H]=\Gamma$. Using Corollary 5.2 in \cite{Martinez} one checks that $\mathrm{cd}_{\mathcal{F}[H]}(\mathrm{Comm}_{\Gamma}[H])\leq \underline{\mathrm{cd}}(F \wr \mathbb{Z})$. Hence, we have $\mathrm{cd}_{\mathcal{F}[H]}(\mathrm{Comm}_{\Gamma}[H])\leq \underline{\mathrm{gd}}(F \wr \mathbb{Z})=2$. Furthermore, there is exactly one $[H] \in \mathcal{I}$ such that $H$ is an infinite cyclic subgroup of $\Gamma$ that is contained in the direct summand $\mathbb{Z}$ of $\Gamma$. \\
\indent Now suppose $H$ is an infinite cyclic subgroup of $\Gamma$ that is not equivalent to the direct summand $\mathbb{Z}$ of $\Gamma$. One can verify, via direct computation or by using Proposition 3.4(a) of \cite{DP}, that $\mathrm{Comm}_{\Gamma}[H] \cap (F \wr \mathbb{Z}) \cong \mathbb{Z}$.  This implies that $\mathrm{Comm}_{\Gamma}[H]\cong \mathbb{Z}^2$, and hence $\underline{\mathrm{cd}}(\mathrm{Comm}_{\Gamma}[H])=2$. Let $C$ be the maximal infinite cyclic subgroup of $\mathrm{Comm}_{\Gamma}[H]$ containing $H$. The subgroups in $\mathcal{F}[H]$ are exactly the subgroups of $C$, and $\mathrm{Comm}_{\Gamma}[H]/C \cong \mathbb{Z}$. Hence, we have \[\mathrm{cd}_{\mathcal{F}[H]}(\mathrm{Comm}_{\Gamma}[H])\leq \mathrm{cd}(\mathrm{Comm}_{\Gamma}[H]/C)= \mathrm{cd}(\mathbb{Z})=1\] by applying Corollary 5.2 in \cite{Martinez} to the extension $0 \rightarrow C \rightarrow \mathrm{Comm}_{\Gamma}[H] \rightarrow \mathbb{Z} \rightarrow 0$, with the appropriate families. Since $\underline{\mathrm{gd}}(\Gamma)=3$, the exact sequence $(\ref{eq: mv 1})$ becomes
\begin{equation*}   \mathrm{H}^{3}_{\mathcal{F}}(\Gamma,M) \xrightarrow{\mathrm{Id}} \mathrm{H}^{3}_{\mathcal{F}}( \Gamma,M)
 \rightarrow \mathrm{H}^{4}_{\mathcal{H}}(\Gamma,M) \rightarrow 0.
\end{equation*}
This implies that $\mathrm{H}^{4}_{\mathcal{H}}(\Gamma,M)=0$, so $\underline{\underline{\mathrm{cd}}}(\Gamma) \leq 3$. We conclude that $\underline{\underline{\mathrm{gd}}}(\Gamma) = 3$.\end{proof}
\end{example}

\section{Extensions with torsion-free quotient}
The following corollary follows easily from Theorem A and Corollary \ref{cor: gd ext bounds}.
\begin{corollary}  Consider a short exact sequence of groups $N \rightarrow \Gamma \to Q$
such that $N$ is elementary amenable of finite Hirsch length and of cardinality $\aleph_n$ and such that $\underline{\underline{\mathrm{gd}}}(Q) < \infty$. Then
\[  \underline{\underline{\mathrm{gd}}}(\Gamma) \leq \underline{\underline{\mathrm{gd}}}(Q)+n+h(N)+3. \]
\end{corollary}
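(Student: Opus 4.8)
The plan is to apply Corollary \ref{cor: gd ext bounds} directly to the given extension $N \rightarrow \Gamma \xrightarrow{\pi} Q$, taking $\mathcal{F}$ to be the family of virtually cyclic subgroups of $\Gamma$ and $\mathcal{H}$ the family of virtually cyclic subgroups of $Q$. Since a quotient of a virtually cyclic group is virtually cyclic, we have $\pi(\mathcal{F}) \subseteq \mathcal{H}$, so the hypotheses of the corollary are satisfied. It then remains to produce a single integer $k$ such that $E_{\mathcal{F}\cap \pi^{-1}(H)}\pi^{-1}(H)$ admits a $k$-dimensional model for every $H \in \mathcal{H}$; the corollary will then yield a model for $E_{\mathcal{F}}\Gamma = \underline{\underline{E}}\Gamma$ of dimension $k + \mathrm{gd}_{\mathcal{H}}(Q) = k + \underline{\underline{\mathrm{gd}}}(Q)$, which is finite by hypothesis.

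Next I would identify $\mathcal{F}\cap \pi^{-1}(H)$ with the family of \emph{all} virtually cyclic subgroups of $\pi^{-1}(H)$: every subgroup of a virtually cyclic group is virtually cyclic, and conversely any virtually cyclic subgroup $K$ of $\pi^{-1}(H)$ equals $K \cap \pi^{-1}(H)$ with $K \in \mathcal{F}$. Hence $\mathrm{gd}_{\mathcal{F}\cap \pi^{-1}(H)}(\pi^{-1}(H)) = \underline{\underline{\mathrm{gd}}}(\pi^{-1}(H))$, so the task reduces to bounding this quantity uniformly over $H \in \mathcal{H}$.

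For that bound, note that $\pi^{-1}(H)$ fits into a short exact sequence $N \rightarrow \pi^{-1}(H) \rightarrow H$ with $N$ elementary amenable and $H$ virtually cyclic, so $\pi^{-1}(H)$ is again elementary amenable; its Hirsch length is $h(N) + h(H) \leq h(N) + 1$ (since $h(H) \in \{0,1\}$), and its cardinality is at most $|N|\cdot|H| \leq \aleph_n \cdot \aleph_0 = \aleph_n$ (a countable or finite group being of cardinality at most $\aleph_0 \leq \aleph_n$). Theorem A then gives $\underline{\underline{\mathrm{gd}}}(\pi^{-1}(H)) \leq n + (h(N)+1) + 2 = n + h(N) + 3$, so we may take $k = n + h(N) + 3$. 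Feeding this into Corollary \ref{cor: gd ext bounds} produces a model for $\underline{\underline{E}}\Gamma$ of dimension at most $\underline{\underline{\mathrm{gd}}}(Q) + n + h(N) + 3$, which is the claimed inequality.

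There is no genuine obstacle here; the only items needing minor care are the verification that $\mathcal{F}\cap \pi^{-1}(H)$ is exactly the virtually cyclic family of $\pi^{-1}(H)$ (so Theorem A applies to it), the cardinality bookkeeping in the degenerate cases $n=0$ or $N$ finite (where the bound only improves), and the harmless passage from ``a model of dimension at most $k$'' to ``a $k$-dimensional model'' so that Corollary \ref{cor: gd ext bounds} can be applied with one uniform $k$.
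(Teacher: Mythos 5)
Your proposal is correct and follows exactly the route the paper intends: the paper's (one-line) proof is precisely an application of Corollary \ref{cor: gd ext bounds} with the virtually cyclic families, using Theorem A on the preimages $\pi^{-1}(H)$, which are elementary amenable of Hirsch length at most $h(N)+1$ and cardinality at most $\aleph_n$. Your extra remarks on identifying $\mathcal{F}\cap\pi^{-1}(H)$ with the virtually cyclic family of $\pi^{-1}(H)$ and on the degenerate cardinality cases are exactly the right points of care, and they check out.
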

In general, given a short exact sequence of groups $N \rightarrow \Gamma \rightarrow Q$, one can ask whether or not the finiteness of $\underline{\underline{\mathrm{gd}}}(N)$ and $\underline{\underline{\mathrm{gd}}}(Q)$ implies the finiteness of $\underline{\underline{\mathrm{gd}}}(\Gamma)$.
The answer to this question is unknown in general. Theorem B gives a positive answer  when $Q$ is torsion-free and the number $\delta(N)$ is finite.

We start with two lemmas.
\begin{lemma}\label{lemma: delta} Let $N$ be a group of cardinality $\aleph_n$, then $\underline{\mathrm{gd}}(N)\leq \delta(N)+1$.
\end{lemma}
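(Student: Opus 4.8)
The plan is to deduce the bound directly from Corollary~\ref{cor: useful combination}(i). First I would observe that every finitely generated subgroup $H$ of $N$ lies in $\mathcal{E}_N$: in the defining extension one may take the finite cyclic group to be trivial, so that $H$ is an extension of the finitely generated subgroup $H$ of $N$ by $C_1$. Hence $\underline{\mathrm{gd}}(H)\leq \sup\{\underline{\mathrm{gd}}(G)\mid G\in\mathcal{E}_N\}=\delta(N)-n$ for every finitely generated subgroup $H$ of $N$. (If one prefers a genuinely nontrivial finite cyclic group in the extension, one can instead restrict a model for $\underline{E}(H\times C_p)$ along the inclusion $H\hookrightarrow H\times C_p$ to obtain a model for $\underline{E}H$ of the same dimension, which gives the same estimate.)

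If $\delta(N)=\infty$ there is nothing to prove, so I would assume $\delta(N)$ is finite and set $d=\delta(N)-n\geq 0$. By the previous paragraph, for every finitely generated subgroup $H$ of $N$ there is a $d$-dimensional model for $\underline{E}H=E_{\mathcal{F}\cap H}H$, where $\mathcal{F}$ is the family of finite subgroups. Applying Corollary~\ref{cor: useful combination}(i) with the group $N$ (of cardinality $\aleph_n$) and the family $\mathcal{F}$, I obtain a model for $\underline{E}N=E_{\mathcal{F}}N$ of dimension $d+n+1=\delta(N)+1$, whence $\underline{\mathrm{gd}}(N)\leq\delta(N)+1$.

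I do not expect any real obstacle here: the argument is just an application of the already established Corollary~\ref{cor: useful combination}(i), and the only point needing a word of justification is that $\mathcal{E}_N$ contains all finitely generated subgroups of $N$, which is immediate from the definition of $\mathcal{E}_N$.
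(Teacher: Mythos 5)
Your argument is correct and is exactly what the paper has in mind: its proof simply says the lemma is immediate from the definition of $\delta(N)$ together with Corollary \ref{cor: useful combination}(i), which is precisely the application you carry out (with the small extra observation, handled either via the trivial cyclic group or via restriction from $H\times C_p$, that finitely generated subgroups of $N$ are covered by $\mathcal{E}_N$).
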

\begin{proof}
This is immediate from the definition of $\delta(N)$ and Corollary \ref{cor: useful combination}(i).
\end{proof}
\begin{lemma} \label{lemma: fam[H]} Let $H$ be an infinite virtually cyclic subgroup of a group $\Gamma$, then
\[ \mathrm{cd}_{\mathcal{F}[H]}(\mathrm{Comm}_{\Gamma}[H]) \leq \max\{\underline{\underline{\mathrm{cd}}}(\mathrm{Comm}_{\Gamma}[H]),\underline{\mathrm{cd}}(\mathrm{Comm}_{\Gamma}[H])\}.\]
\end{lemma}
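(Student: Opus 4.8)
The plan is to realize $\mathcal{F}[H]$ inside the family of all virtually cyclic subgroups by means of the L\"uck--Weiermann push-out and then read the bound off the resulting Mayer--Vietoris sequence. Write $G=\mathrm{Comm}_\Gamma[H]$, and let $\mathcal{F}$ and $\mathcal{VC}$ denote the families of finite and of virtually cyclic subgroups of $G$, respectively. The key point is that $\mathrm{Comm}_G[H]=\mathrm{Comm}_\Gamma[H]\cap G=G$; hence, when the L\"uck--Weiermann decomposition of $\mathcal{VC}\smallsetminus\mathcal{F}$ into $\sim$-classes is carried out inside $G$, the class $[H]$ is fixed by the conjugation action of $G$, its generalized normalizer $\mathrm{N}_G[H]$ is $G$ itself, and, since $\mathrm{N}_G[H]=G$, the family attached to $[H]$ by (\ref{eq: special family}) is precisely the family $\mathcal{F}[H]$ appearing in the statement.

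First I would invoke the $G$-push-out of \cite[2.5]{LuckWeiermann} for the pair $\mathcal{F}\subseteq\mathcal{VC}$ inside $G$: its lower-left corner is $\coprod_{[K]}G\times_{\mathrm{Comm}_G[K]}E_{\mathcal{F}[K]}(\mathrm{Comm}_G[K])$, where $[K]$ runs over a set of representatives of the $G$-orbits of $\sim$-classes of infinite virtually cyclic subgroups, and the summand corresponding to $[H]$ is $G\times_{G}E_{\mathcal{F}[H]}(G)=E_{\mathcal{F}[H]}G$. Taking the associated Mayer--Vietoris sequence in Bredon cohomology (this is the long exact sequence of Section 7; as in Examples \ref{ex: direct sum} and \ref{ex: wreath plus Z} one runs it with coefficients $\mathrm{ind}(N)$ for an arbitrary $\mathcal{O}_{\mathcal{F}[H]}G$-module $N$, using that restriction of $\mathrm{ind}(N)$ back to $\mathcal{O}_{\mathcal{F}[H]}G$ recovers $N$), one obtains, for every $m$, an exact segment
\[ \mathrm{H}^m_{\mathcal{VC}}(G;-)\longrightarrow \mathrm{H}^m_{\mathcal{F}}(G;-)\oplus\prod_{[K]}\mathrm{H}^m_{\mathcal{F}[K]}(\mathrm{Comm}_G[K];-)\longrightarrow \prod_{[K]}\mathrm{H}^m_{\mathcal{F}\cap\mathrm{Comm}_G[K]}(\mathrm{Comm}_G[K];-) \]
in which the middle term has $\mathrm{H}^m_{\mathcal{F}[H]}(G;N)$ as a direct factor (the factor indexed by $[H]$, since $\mathrm{Comm}_G[H]=G$).

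Now fix $m>\max\{\underline{\underline{\mathrm{cd}}}(G),\underline{\mathrm{cd}}(G)\}$. The left-hand term vanishes because $m>\underline{\underline{\mathrm{cd}}}(G)=\mathrm{cd}_{\mathcal{VC}}(G)$, and each factor of the right-hand term vanishes because $\mathrm{Comm}_G[K]$ is a subgroup of $G$, so $\underline{\mathrm{cd}}(\mathrm{Comm}_G[K])\le\underline{\mathrm{cd}}(G)<m$ and $\mathcal{F}\cap\mathrm{Comm}_G[K]$ is the family of finite subgroups of $\mathrm{Comm}_G[K]$. By exactness the middle term vanishes, so in particular $\mathrm{H}^m_{\mathcal{F}[H]}(G;N)=0$; as $N$ was arbitrary, $\mathrm{cd}_{\mathcal{F}[H]}(\mathrm{Comm}_\Gamma[H])\le\max\{\underline{\underline{\mathrm{cd}}}(\mathrm{Comm}_\Gamma[H]),\underline{\mathrm{cd}}(\mathrm{Comm}_\Gamma[H])\}$. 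Note that the Bredon dimensions of the other commensurators $\mathrm{Comm}_G[K]$ never have to be controlled: the whole middle term dies at once from the vanishing of the two outer terms. The only genuinely delicate point is the bookkeeping of coefficient modules, i.e.\ having the Mayer--Vietoris sequence available in enough generality to test all $\mathcal{O}_{\mathcal{F}[H]}G$-modules, which is exactly what the algebraic construction of Section 7 is meant to provide; the remaining ingredients are just the identification $\mathrm{N}_G[H]=G$, the equality $\mathrm{cd}_{\mathcal{VC}}(G)=\underline{\underline{\mathrm{cd}}}(G)$, and monotonicity of $\underline{\mathrm{cd}}$ under passage to subgroups.
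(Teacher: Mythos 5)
Your proposal is correct and follows essentially the same route as the paper: the paper also works inside $G=\mathrm{Comm}_\Gamma[H]$, applies the algebraic Mayer--Vietoris sequence of Proposition \ref{prop: mayer-vietoris} with coefficients $\mathrm{ind}_{\mathcal{F}[H]}^{\mathcal{H}}(K)$ for an arbitrary $\mathcal{O}_{\mathcal{F}[H]}G$-module $K$, uses $\mathrm{Comm}_G[H]=G$ to locate $\mathrm{H}^n_{\mathcal{F}[H]}(G,K)$ as a factor of the middle term (via Lemma \ref{lemma: induction} to recover $K$ after restriction), and kills the two outer terms in degrees above $\max\{\underline{\underline{\mathrm{cd}}}(G),\underline{\mathrm{cd}}(G)\}$, using as you do the monotonicity of $\underline{\mathrm{cd}}$ under passage to subgroups for the right-hand product. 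No gaps; your explicit remark that the other commensurators' $\mathcal{F}[S]$-dimensions need not be controlled matches the paper's argument.
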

\begin{proof} Let $G=\mathrm{Comm}_{\Gamma}[H]$ and denote by $\mathcal{H}$ the family of virtually cyclic subgroups of $G$. Take $K \in \mbox{Mod-}\mathcal{O}_{\mathcal{F}[H]}G$ and denote the induction functor from $\mbox{Mod-}\mathcal{O}_{\mathcal{F}[H]}G$ to $\mbox{Mod-}\mathcal{O}_{\mathcal{H}}G$ by $\mathrm{ind}_{\mathcal{F}[H]}^{\mathcal{H}}$. Let $M=\mathrm{ind}_{\mathcal{F}[H]}^{\mathcal{H}}(K)$. By Proposition \ref{prop: mayer-vietoris}, we have an exact sequence
\[ \mathrm{H}^{n}_{\mathcal{H}}(G,M) \rightarrow \Big(\prod_{[S] \in \mathcal{I}} \mathrm{H}^{n}_{\mathcal{F}[S] }( \mathrm{Comm}_{G}[S],M)\Big)\oplus  \mathrm{H}^{n}_{\mathcal{F}}(G,M) \rightarrow  \prod_{[S] \in \mathcal{I}} \mathrm{H}^{n}_{\mathcal{F}\cap  \mathrm{Comm}_{G}[S] }( \mathrm{Comm}_{G}[S],M).  \]
Note that $[H]\in \mathcal{I}$, and that $\mathrm{Comm}_{G}[H]=G$. Hence, if we choose $n>\max\{\underline{\underline{\mathrm{cd}}}(G),\underline{\mathrm{cd}}(G)\}$, it follows from the exact sequence and Lemma \ref{lemma: induction} that $\mathrm{H}^{n}_{\mathcal{F}[H] }( G,M)=\mathrm{H}^{n}_{\mathcal{F}[H] }( G,K)=0$. Since this holds for any $K \in \mbox{Mod-}\mathcal{O}_{\mathcal{F}[H]}G$, we conclude that   \[\mathrm{cd}_{\mathcal{F}[H]}(\mathrm{Comm}_{G}[H]) \leq \max\{\underline{\underline{\mathrm{cd}}}(\mathrm{Comm}_{\Gamma}[H]),\underline{\mathrm{cd}}(\mathrm{Comm}_{\Gamma}[H])\}.\]
\end{proof}
We can now prove Theorem B.
\begin{proof}[Proof of Theorem B] It suffices to prove that for any semi-direct product $N\rtimes \mathbb{Z}$, we have $$\underline{\underline{\mathrm{gd}}}(N\rtimes \mathbb{Z})\leq \max\{\underline{\underline{\mathrm{gd}}}(N), \delta(N)\}+2.$$ Because then it would follow from Corollary \ref{cor: gd ext bounds} that \[\underline{\underline{\mathrm{gd}}}(\Gamma) \leq \underline{\underline{\mathrm{gd}}}(Q)+ \max\{\underline{\underline{\mathrm{gd}}}(N), \delta(N)\}+2,\] since $Q$ is torsion-free.

If $N$ is virtually cyclic, then the desired inequality clearly holds. Assume that $N$ is not virtually cyclic. Then $\underline{\underline{\mathrm{gd}}}(N)\geq 1$, so it suffices to prove that \[\underline{\underline{\mathrm{cd}}}(N\rtimes \mathbb{Z})\leq  \max\{\underline{\underline{\mathrm{gd}}}(N), \delta(N)\}+2.\]
Let us consider a semi-direct product $G=N\rtimes \mathbb{Z}$ and an infinite cyclic subgroup $H$ of $G$. There is a short exact sequence
\[ 1 \rightarrow N_0 \rightarrow \mathrm{Comm}_{G}[H] \xrightarrow{p} m\mathbb{Z} \rightarrow 0 \]
where $N_0=N\cap \mathrm{Comm}_{G}[H]$  and $m$ is an integer that can be zero. We will find an upper bound for $\mathrm{cd}_{\mathcal{F}[H]}(\mathrm{Comm}_{G}[H])$. To do this, we distinguish between two cases.

First, assume that $H$ is contained in $N$. If $K$ is a virtually cyclic subgroup of $\mathrm{Comm}_{G}[H]$ which is equivalent to $H$ then $p(K)$ is a finite group, since $H$ is contained in the kernel of $p$. Since $m\mathbb{Z}$ is torsion-free, we conclude that every subgroup of $\mathcal{F}[H]$ is contained in $N_0$. Since $N_0=\mathrm{Comm}_{N_0}[H]$, it follows from Lemma \ref{lemma: fam[H]} that \[\mathrm{cd}_{\mathcal{F}[H]}(N_0)\leq \max\{\underline{\underline{\mathrm{cd}}}(N_0),\underline{\mathrm{cd}}(N_0)\}\leq\max\{\underline{\underline{\mathrm{gd}}}(N),\underline{\mathrm{gd}}(N)\}.\] Hence, Corollary \ref{cor: gd ext bounds} implies that \[\mathrm{cd}_{\mathcal{F}[H]}(\mathrm{Comm}_{G}[H])\leq \max\{\underline{\underline{\mathrm{gd}}}(N),\underline{\mathrm{gd}}(N)\}+1.\]

Now, assume that $H$ is not contained in $N$ (in this case $m$ is not zero). Let $K$ be a finitely generated subgroup of $\mathrm{Comm}_{G}[H]$ that contains $H$. By Proposition \ref{prop: finitely generated}, $K$ is a $B$-by-$\mathbb{Z}$, where $B$ is a finitely generated subgroup of $N$, and $H$ has an infinite cyclic subgroup $R$ that is central in $K$. It follows from Lemma \ref{lemma: virt poly} that $\mathrm{gd}_{\mathcal{F}[H]\cap K}(K)= \underline{\mathrm{gd}}(K/R)$. Since $K/R$ is $B$-by-finite cyclic where $B$ is a finitely generated subgroup of $N$, it follows that $\mathrm{gd}_{\mathcal{F}[H]\cap K}(K)\leq \delta(N)-n$. Hence, by Corollary \ref{cor: useful combination} we have $\mathrm{gd}_{\mathcal{F}[H]}(\mathrm{Comm}_{G}[H])\leq \delta(N)+1$. Therefore, in general we have \[\mathrm{cd}_{\mathcal{F}[H]}(\mathrm{Comm}_{G}[H])\leq \max\{\underline{\underline{\mathrm{gd}}}(N),\underline{\mathrm{gd}}(N), \delta(N)\}+1.\]
\indent Using Theorem \ref{th: cd push out} together with Lemma \ref{lemma: delta}, we deduce that \[\underline{\underline{\mathrm{cd}}}(N\rtimes \mathbb{Z})\leq \max\{\underline{\underline{\mathrm{gd}}}(N), \delta(N)\}+2.\]
\end{proof}
As mentioned in the introduction, we do not know of an example of a group $N$ such that $\underline{\mathrm{gd}}(N)< \infty$ and $\delta(N)=\infty$. However, we will show that there are groups $N$  for which $\delta(N)$ is finite but the difference $\delta(N)-\underline{\mathrm{gd}}(N)$ is  arbitrarily large. This groups will be constructed  by the means of Theorem C.

Before going into the proof of this theorem, let us first recall some of the basics of the general construction of Bestvina-Brady groups $H_L$ and the semi-direct products $H_L\rtimes Q$. For more details, we suggest the original sources  \cite{BestvinaBrady} and \cite{LearyNucinkis}. 

For a given  finite flag complex $L$, the right-angled Artin group $G_L$ is the group whose generators are in one-to-one correspondence with the
vertex set of $L$ subject only to the relations that if two vertices are connected by an edge in the $1$-skeleton of $L$, they commute.  There is an epimorphism $\phi_L$: $G_L\to \Z$ defined by mapping each of the given generators to $1$. The group $H_L$ is defined to be the kernel of $\phi_L$.
It is a general fact that any right-angled Artin group can be embedded into a right angled Coxeter group which in turn is known to be a subgroup of $\mbox{GL}_m(\Z)$ for some $m$ (see \cite[Corollary 8]{LearyNucinkis}).  This shows that  $H_L$ is an integral linear group.  The map sending $L$ to $G_L$ is a functor from flag complexes to groups. Therefore, every automorphism of $L$ induces an automorphism of $G_L$. In particular, if a group $Q$ acts admissibly and simpicially on $L$, this induces a representation of $Q$ into the group of automorphisms of $G_L$.  Since the epimorphism $\phi_L$ is $Q$-equivariant, this representation restricts to $H_L$ and one can form the semi-direct product $H_L\rtimes Q$.

\begin{proof}[Proof of Theorem C]  

(a). Let $L$ be an acyclic $n$-dimensional finite flag complex with a simplicial admissible faithful action of a cyclic group $Q$ of a prime order. It follows from the Main Theorem of \cite{BestvinaBrady} that $H_L$ is of type $FP$ and if one imposes the stronger condition that $L$ is contractible, then $H_L$ is of type $F$. Hence $\Gamma=H_{L}\rtimes Q$ is of type $VFP$ in general, and of type $VF$ if $L$ is contractible. Since $L$ is acyclic, it follows from Theorem 22 of \cite{LearySaad} that $\mathrm{cd}(H_L)=n$. Since $H_L$ contains $\mathbb{Z}^n$ as a subgroup, we also have $\mathrm{hd}(H_L)=n$. Denote by $C_{\Gamma}(Q)$ the centralizer of $Q$ in $\Gamma$ and set $C_{H_L}(Q)=C_{\Gamma}(Q)\cap H_L$. Note that the normalizer $N_{\Gamma}(Q)$ of $Q$ in ${\Gamma}$ is isomorphic to $C_{H_L}(Q)\times Q$ and that the Weyl group $W(Q)=N_{\Gamma}(Q)/Q$ is isomorphic to $C_{H_L}(Q)$. Since, by Smith Theory (see \cite{Smith}), the fixed-point set  $L^{Q}$  is non-empty, from Theorem 3(2) of \cite{LearyNucinkis}, it follows that all non-trivial finite subgroups of $\Gamma$ are conjugate to $Q$ and that $C_{H_L}(Q)$ is the Bestvina-Brady group $H_{L^Q}$.

Recall that in our case $\Gamma_s=\prod_{i=1}^s H_{L_i}\rtimes C_{p_i}$ where $H_{L_i}$ is the Bestvina-Brady group associated to the $n_i$-dimensional acyclic finite flag complex $L_i$ for  $i=1, \dots, s$. So, in the above,  replacing  $L$ and $Q$ with $L_i$ and $C_{p_i}$, respectively, for each $i$, we obtain that $H_{L_i}$ is an $FP$-group, $\Z^{n_i}$ is a subgroup of $H_{L_i}$, and  $\mathrm{hd}(H_{L_i})=\mathrm{cd}(H_{L_i})=n_i$.
This implies that $\Gamma_s$ is of type $VFP$ ($VF$ if $L$ is contractible) and $\mathrm{vhd}(\Gamma_s)=\mathrm{vcd}(\Gamma)=\sum_{i=1}^s n_i$.  Note that each group $H_{L_i}\rtimes C_{p_i}$ is an integral linear group because it is a finite extension of such a group.  Since $\G_s$ is a finite product of integral linear groups, it is itself integral linear.

For the rest of the proof we argue by induction on $s$. Suppose $s=1$. Denote $p=p_1$, $n_1=n$, $L=L_1$ and $\Gamma=\Gamma_1$. Let $\mathcal{F}$ be the family of finite subgroups of $\Gamma$, which in this case consists of the trivial subgroup and subgroups isomorphic to  $C_{p}$. Let $\mathcal{T}$ be the family containing only the trivial subgroup of $\Gamma$. Define the covariant functor 
\[ V: \mathcal{O}_{\mathcal{F}}\Gamma \rightarrow \mathbb{Z}\mbox{-mod}: \Gamma /H \mapsto 
\left\{\begin{array}{lll}  \F_q & \mbox{if}&  H=\{e\} \\
                    0            & \mbox{if}&  H\neq \{e\}, \end{array} \right. \]
where every morphism $\Gamma /H \xrightarrow{g} \Gamma /K$ with $K \neq \{e\}$ is mapped to the zero homomorphism and every morphism $\Gamma /\{e\} \xrightarrow{g} \Gamma /\{e\}$ is mapped to the identity homomorphism. 
We define a strong equivalence relation on $\mathcal{F}\smallsetminus \mathcal{T}$ by $$H\sim K \Leftrightarrow H=K.$$ Under this equivalence relation, one has $N_{\Gamma}[H]=C_{\Gamma}(H)$, the family $\mathcal{T}[H]$ is $\{ \{e\},H  \}$, and there is exactly one conjugacy class of equivalence classes of non-trivial subgroups of $\G$, namely the one represented by $C_{p}$. Now, the $\mathrm{Tor}$ long exact sequence of Theorem \ref{th: ext} with $M=\underline{\mathbb{Z}}$ and $T=V$ yields the exact sequence
\[ \ldots \rightarrow \mathrm{H}_{n+1}^{\mathcal{F}}(\Gamma,V) \rightarrow \mathrm{H}_{n}(C_{\Gamma}(C_{p}),\F_q) \rightarrow \mathrm{H}_{n}^{\mathcal{T}[C_{p}]}(C_{\Gamma}(C_{p}),V)\oplus \mathrm{H}_{n}(\Gamma,\F_q) \rightarrow \ldots \ .                \]
Since  $C_{\Gamma}(C_{p})\cong C_{H_{L}}(C_{p})\times C_{p}$ and $p\neq q$, it follows that $$\mathrm{H}_{n}(C_{\Gamma}(C_{p}),\F_q)=\mathrm{H}_{n}(C_{H_{L}}(C_{p}),\F_q).$$ 
We claim that $\mathrm{H}_{n}^{\mathcal{T}[C_{p}]}(C_{\Gamma}(C_{p}),V)=0$. To see this, consider the convergent $E_2$-term spectral sequence
\[  \mathrm{H}_t(C_{H_{L}}(C_{p}),\mathrm{H}_r^{\mathcal{T}[C_{p}]}(C_p,V)) \Longrightarrow \mathrm{H}_{t+r}^{\mathcal{T}[C_{p}]}(C_{\Gamma}(C_{p}),V)   \]
associated to the extension $$1 \rightarrow C_{p} \rightarrow C_{\Gamma}(C_{p}) \rightarrow C_{H_{L}}(H) \rightarrow 1$$ (see  \cite[Theorem 5.1]{Martinez}). Since $C_{p} \in \mathcal{T}[C_{p}]$, we have $\mathrm{H}_r^{\mathcal{T}[C_{p}]}(C_{p},V)=0$ for all $r \geq 1$ and $\mathrm{H}_0^{\mathcal{T}[C_{p}]}(C_{p},V)=V(\Gamma/C_{p})=0$. Hence, all the $E_2$-terms vanish which proves the claim. We conclude that there is an exact sequence
\[ \mathrm{H}_{n+1}^{\mathcal{F}}(\G,V) \rightarrow \mathrm{H}_{n}(C_{H_{L}}(C_{p}),\F_q) \rightarrow \mathrm{H}_{n}(\Gamma,\F_q). \] 
Since $\Gamma$ is of type ${FP}_\infty$,  the group $\mathrm{H}_{n}(\Gamma, \F_q)$ is a finite dimensional vector space over $\F_q$. However, combining  Corollary 7 in \cite{LearySaad} together with the hypothesis that $\mathrm{H}_{n-1}(L^{C_{p}},\F_q)\ne 0$, shows that $\mathrm{H}_{n}(C_{H_{L}}(C_{p}),\F_q)$ is an infinite dimensional vector space over $\F_q$. In follows that $\mathrm{H}_{n+1}^{\mathcal{F}}(\Gamma,V)$ is an infinite dimensional vector space over $\F_q$ and  in particular, $\underline{\mathrm{hd}}(\Gamma)\geq n+1$. Since $\underline{\mathrm{cd}}(\G)\leq n+1$, by Theorem 6.4 of \cite{Luck1}, it follows that $\underline{\mathrm{hd}}(\G))=\underline{\mathrm{cd}}(\G)=n+1$. This concludes the case $s=1$.


For each $2\leq k\leq s$, let $\mathcal{F}_k$ be the family of finite subgroups of $\G_k=\prod_{i=1}^k H_{L_i}\rtimes C_{p_i}$. By Theorem 6.4 in \cite{Luck1}, we have that $$\underline{\mathrm{cd}}(\G_k)\leq \sum_{i=1}^k (n_i+1)=k+\sum_{i=1}^k n_i.$$
We will show that $\underline{\mathrm{hd}}(\G_k)\geq k+\sum_{i=1}^k n_i$, which will imply that $$\underline{\mathrm{hd}}(\G_k)=\underline{\mathrm{cd}}(\G_k)=k+\sum_{i=1}^k n_i.$$ To this end, define the covariant functor
\[ V_k: \mathcal{O}_{\mathcal{F}_k}\G_k \rightarrow \mathbb{Z}\mbox{-mod}: \G_k/H \mapsto 
\left\{\begin{array}{lll}  \F_q & \mbox{if}&  H=\{e\} \\
                    0            & \mbox{if}&  H\neq \{e\}, \end{array} \right. \]
where every morphism $\G_k/H \xrightarrow{g} \G_k/K$ with $K \neq \{e\}$ is mapped to the zero homomorphism, and every morphism $\G_k/\{e\} \xrightarrow{g} \G_k/\{e\}$ is mapped to the identity homomorphism. To finish the proof, it suffices to show that $\mathrm{H}^{\mathcal{F}_k}_{m_k}(\G_k,V_k)$ is an infinite dimensional  $\F_q$-vector space where $m_k=k+\sum_{i=1}^k n_i$. 

We denote $\G=H_{L_k}\rtimes C_{p_k}$, define $\mathcal{F}$ to be the family of finite subgroups of $\G$, and let $V: \mathcal{O}_{\mathcal{F}}G \rightarrow \mathbb{Z}\mbox{-mod}$ be the covariant functor as before. Note that  $G_k=\G_{k-1}\times \G $. Associated to the extension $$1 \rightarrow \G \rightarrow \G_k \xrightarrow{\pi} \G_{k-1}\rightarrow 1$$ there is a convergent $E_2$-term spectral sequence (see  \cite[5.1]{Martinez})
\begin{equation}\label{spec}
\mathrm{H}^{\mathcal{F}_{k-1}}_s(\G_{k-1},\mathrm{H}_r^{\mathcal{F}_k\cap \pi^{-1}(-)}( \pi^{-1}(-),V_k)) \Longrightarrow \mathrm{H}_{s+r}^{\mathcal{F}_k}(\G_k,V_k).
\end{equation}
Observe that for any $K\in \mathcal{F}_{k-1}$, the preimage $\pi^{-1}(K)$ is direct product of $\Gamma$ with a finite abelian group $A$. Applying the spectral sequence of \cite[5.1]{Martinez} associated to the extension $1\to A\to \pi^{-1}(K)\xrightarrow{\omega}  \G\to 1$ gives us 
\begin{eqnarray*}\mathrm{H}_r^{{\mathcal{F}_k}\cap \pi^{-1}(K)}( \pi^{-1}(K),V_k)&\cong& \mathrm{H}_r^{{\mathcal{F}_k}\cap \G}(\G,\mathrm{H}_0^{{\mathcal{F}_k}\cap \omega^{-1}(-)}(\omega^{-1}(-), V_k))\\
&\cong& \mathrm{H}_r^{{\mathcal{F}_k}\cap \G}(\G, V_k(\G_k/{\omega^{-1}(-)}))\\
&\cong& \mathrm{H}_r^{\mathcal{F}}(\G, V_k(\G_k/{\omega^{-1}(-)}))\end{eqnarray*}
where first and second isomorphisms follow from the fact that $\omega^{-1}(F)\in \mathcal{F}_k$ for every finite subgroup $F$ of $\G$. One can now easily check that  the functor $\mathrm{H}_r^{\mathcal{F}_k\cap \pi^{-1}(-)}( \pi^{-1}(-),V_k)$ is isomorphic to the functor $V_{k-1}\otimes_{\mathbb{Z}}\mathrm{H}_{r}^{\mathcal{F}}(\G,V)$.

Next, using a corner argument for the spectral sequence (\ref{spec}), we obtain \begin{eqnarray*}\mathrm{H}_{m_k}^{\mathcal{F}_k}(\G_k,V_k)&=&  \mathrm{H}_{m_{k-1}}^{\mathcal{F}_{k-1}}(\G_{k-1},\mathrm{H}_{n_k+1}^{\mathcal{F}_k\cap \pi^{-1}(-)}( \pi^{-1}(-),V_k))\\ 
&\cong&  \mathrm{H}_{m_{k-1}}^{\mathcal{F}_{k-1}}(\G_{k-1},V_{k-1}\otimes_{\mathbb{Z}}\mathrm{H}_{n_k+1}^{\mathcal{F}}(\G,V)).
\end{eqnarray*}
Now, let $P_{\ast} \rightarrow \underline{\mathbb{Z}}$ be a free contravariant $\mathcal{O}_{\mathcal{F}_{k-1}}\G_{k-1}$-resolution of $\underline{\mathbb{Z}}$. Then   $C_{\ast}=P_{\ast}\otimes_{\mathcal{F}_{k-1}}V_{k-1}$ is a complex of $\F_q$-vector spaces. One verifies that \[P_{\ast}\otimes_{\mathcal{F}_{k-1}}\Big(V_{k-1}\otimes_{\mathbb{Z}}\mathrm{H}_{n_k+1}^{\mathcal{F}}(\G,V)\Big) \cong C_{\ast} \otimes_{\mathbb{Z}} \mathrm{H}_{n_k+1}^{\mathcal{F}}(\G,V).\] This implies that
\begin{eqnarray*} \mathrm{H}_{m_{k-1}}^{\mathcal{F}_{k-1}}(\G_{k-1},V_{k-1}\otimes_{\mathbb{Z}}\mathrm{H}_{n_k+1}^{\mathcal{F}}(\G,V)) & = & \mathrm{H}_{m_{k-1}}(C_{\ast} \otimes_{\mathbb{Z}}  \mathrm{H}_{n_k+1}^{\mathcal{F}}(\G,V) )\\ &=&    \mathrm{H}_{m_{k-1}} (C_{\ast})\otimes_{\mathbb{Z}} \mathrm{H}_{n_k+1}^{\mathcal{F}}(\G,V) \\  &=&   \mathrm{H}_{m_{k-1}}^{\mathcal{F}_{k-1}}(\G_{k-1},V_{k-1})\otimes_{\mathbb{Z}}   \mathrm{H}_{n_k+1}^{\mathcal{F}}(\G,V).\end{eqnarray*}
This second equality follow from the fact that $C_{\ast}$ and $\mathrm{H}_{n_k+1}^{\mathcal{F}}(\G,V)$ are both vector spaces over $\F_q$, so tensoring with $\mathrm{H}_{n_k+1}^{\mathcal{F}}(\G,V)$ is exact. We conclude by induction that $\mathrm{H}_{m_k}^{\mathcal{F}_k}(G_k,V_k)$ is an infinite dimensional vector space over $\F_q$. This finished the proof of part (a).\\

\noindent (b). Since $\Lambda_s$ is an extension of $N=\prod_{i=1}^s H_{L_i}$ by $\Z$ it follow that
$$\mathrm{cd}(\Lambda_s)\leq 1+ \mathrm{cd}(N)= 1+\sum_{i=1}^s n_i.$$ On the other hand, since each Bestvina-Brady group $H_{L_i}$ contains a subgroup isomorphic to $\Z^{n_i}$, it is not difficult to see that $\Lambda_s$ contains a free abelian subgroups of rank $1+\sum_{i=1}^s n_i$. This shows that $1+\sum_{i=1}^s n_i\leq \mathrm{hd}(\Lambda_s)$. Since $\mathrm{hd}(\Lambda_s)\leq \mathrm{cd}(\Lambda_s)$, we deduce that 
$$\mathrm{hd}(\Lambda_s)= \mathrm{cd}(\Lambda_s)=1+\sum_{i=1}^s n_i.$$ 

Recall that right-angled Artin groups are CAT(0)-groups and hence, by Theorem 1.1 of \cite{Luck3}, have finite dimensional classifying spaces for the family of virtually cyclic subgroups. Since $N$ is an $s$-fold product of  subgroups of  right-angled Artin groups, it follows for instance from Corollary 5.6 of \cite{LuckWeiermann} that $\underline{\underline{\mathrm{gd}}}(N\times \Z) < \infty$. Because $\Lambda_s$ contains  a finite index subgroup isomorphic to $N\times \Z$, we conclude from Theorem 2.4 of \cite{Luck1} that $\underline{\underline{\mathrm{gd}}}(\Lambda_s) < \infty$. Hence, $\underline{\underline{\mathrm{cd}}}(\Lambda_s) < \infty$.

\indent By part (a), we have $\mathrm{cd}(\G_s)=s+\mathrm{cd}(N)$ with $s\geq 2$. So,
Lemma \ref{lem: delta(N)} below gives us  
$$\underline{\underline{\mathrm{cd}}}(\Lambda_s)\geq s+ {{\mathrm{cd}}}(N)= s+\sum_{i=1}^s n_i.$$

Finally, since $\Lambda_s$ is a torsion-free finite extension of $N\times \Z$ which is an integral linear group of type $FP$, it  follows that $\Lambda_s$ is an integral linear group of type $FP$. If additionally $L$ is contractible, then $\Lambda_s$ is of type $F$.
\end{proof}
\begin{lemma}\label{lem: delta(N)} Let $N \to \G \xrightarrow{\pi} C_d$ be a short exact sequence of  groups such that $\underline{\mathrm{cd}}(\G)\geq\underline{\mathrm{cd}}(N)+s$ for some $s\geq 2$. 
Let $\Lambda=N\rtimes \Z$ be the pull-back of $\G$ under projection of 
$\Z$ onto $C_d$. Then  $\underline{\underline{\mathrm{cd}}}(\Lambda)\geq\underline{\mathrm{cd}}(N)+s$.
\end{lemma}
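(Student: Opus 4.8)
The plan is to realise $\Lambda$ as a central extension of $\G$ by $\Z$ and then play the two auxiliary lemmas against a sharp upper bound for $\underline{\mathrm{cd}}(\Lambda)$. Since $\Z$ is free, the extension $N\to\Lambda\to\Z$ splits and, by the definition of the pull-back, $\Lambda=N\rtimes_{\varphi}\langle t\rangle$, where $\varphi$ is the restriction to $N$ of conjugation by a fixed lift $\tilde g\in\G$ of a generator of $C_d$; the isomorphism with the pull-back sends $(n,t^m)$ to $(n\tilde g^m,m)\in\G\times\Z$. I would set $a=\tilde g^{d}$, which lies in $N$ because $\tilde g^{d}$ maps to the identity of $C_d$, and verify that $z:=(a^{-1},t^{d})$ generates an infinite cyclic subgroup $Z\leq\Lambda$ which is \emph{central}, and that the pull-back projection $\Lambda\to\G$, $(n,t^m)\mapsto n\tilde g^m$, is onto with kernel exactly $Z$. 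The verifications are short: $\varphi(a)=\tilde g\tilde g^{d}\tilde g^{-1}=a$, so $(e,t)$ commutes with $z$; and $\varphi^{d}$ is conjugation by $a$, so $(n,1)\,z\,(n^{-1},1)=z$ for all $n\in N$; thus $z$ commutes with a generating set of $\Lambda$. Hence $1\to Z\to\Lambda\to\G\to1$ is a central extension with $Z\cong\Z$.

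Next, because $Z$ is central it is normal in $\Lambda$ and $\mathrm{Comm}_{\Lambda}[Z]=\Lambda$. Since $Z$ is an infinite cyclic subgroup of itself that is normal in $\mathrm{Comm}_{\Lambda}[Z]$, Lemma~\ref{lemma: virt poly} (with ambient group $\Lambda$) gives
\[ \mathrm{cd}_{\mathcal{F}[Z]}(\mathrm{Comm}_{\Lambda}[Z])=\underline{\mathrm{cd}}(\mathrm{Comm}_{\Lambda}[Z]/Z)=\underline{\mathrm{cd}}(\Lambda/Z)=\underline{\mathrm{cd}}(\G)\geq\underline{\mathrm{cd}}(N)+s, \]
the last inequality being the hypothesis. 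On the other hand, Lemma~\ref{lemma: fam[H]} applied with ambient group $\Lambda$ and $H=Z$ yields $\mathrm{cd}_{\mathcal{F}[Z]}(\mathrm{Comm}_{\Lambda}[Z])\leq\max\{\underline{\underline{\mathrm{cd}}}(\Lambda),\underline{\mathrm{cd}}(\Lambda)\}$.

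To finish I would bound $\underline{\mathrm{cd}}(\Lambda)$ from above by $\underline{\mathrm{cd}}(N)+1$. (If $\underline{\mathrm{cd}}(N)=\infty$ then $N\leq\Lambda$ already forces $\underline{\underline{\mathrm{cd}}}(\Lambda)=\infty$ and there is nothing to prove, so one may assume $\underline{\mathrm{cd}}(N)$ finite.) Applying the Bredon Lyndon--Hochschild--Serre spectral sequence \cite[Theorem 5.1]{Martinez} to $1\to N\to\Lambda\to\Z\to1$, with the family of finite subgroups on $\Lambda$ and the (trivial) family of finite subgroups of $\Z$, one notes that every finite subgroup of $\Lambda$ lies in $N$, so the $E_2$-page is $\mathrm{H}^p(\Z,\underline{\mathrm{H}}^q(N,-))$, which vanishes for $p\geq2$ and for $q>\underline{\mathrm{cd}}(N)$; hence $\underline{\mathrm{cd}}(\Lambda)\leq\underline{\mathrm{cd}}(N)+1$. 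Since $s\geq2$, this gives
\[ \underline{\mathrm{cd}}(\Lambda)\leq\underline{\mathrm{cd}}(N)+1<\underline{\mathrm{cd}}(N)+s\leq\mathrm{cd}_{\mathcal{F}[Z]}(\mathrm{Comm}_{\Lambda}[Z])\leq\max\{\underline{\underline{\mathrm{cd}}}(\Lambda),\underline{\mathrm{cd}}(\Lambda)\}, \]
so the maximum on the right must be realised by $\underline{\underline{\mathrm{cd}}}(\Lambda)$, and therefore $\underline{\underline{\mathrm{cd}}}(\Lambda)\geq\underline{\mathrm{cd}}(N)+s$.

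The one genuinely delicate point is the construction in the first step: the quotient of $\Lambda$ by an \emph{arbitrary} normal infinite cyclic subgroup need not be $\G$, and it is essential that $Z$ be central so that $\mathrm{Comm}_{\Lambda}[Z]=\Lambda$ and Lemma~\ref{lemma: virt poly} computes $\mathrm{cd}_{\mathcal{F}[Z]}$ as $\underline{\mathrm{cd}}(\G)$ rather than as the cohomological dimension of some proper subgroup. Everything else is bookkeeping with the two cited lemmas, with the hypothesis $s\geq2$ used precisely to defeat the off-by-one loss that the $\underline{\mathrm{cd}}(\Lambda)$ term in the maximum would otherwise introduce.
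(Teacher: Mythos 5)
Your proposal is correct and follows essentially the same route as the paper: exhibit the kernel $Z\cong\Z$ of $\Lambda\to\G$ as a normal (central) infinite cyclic subgroup with $\mathrm{Comm}_{\Lambda}[Z]=\Lambda$, compute $\mathrm{cd}_{\mathcal{F}[Z]}(\Lambda)=\underline{\mathrm{cd}}(\G)$ via Lemma \ref{lemma: virt poly}, bound it above by $\max\{\underline{\underline{\mathrm{cd}}}(\Lambda),\underline{\mathrm{cd}}(\Lambda)\}$ via Lemma \ref{lemma: fam[H]}, and use $\underline{\mathrm{cd}}(\Lambda)\leq\underline{\mathrm{cd}}(N)+1$ together with $s\geq 2$ to force the maximum onto $\underline{\underline{\mathrm{cd}}}(\Lambda)$. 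The only difference is that you spell out details the paper leaves implicit (the explicit central generator $z$ and the spectral-sequence justification of $\underline{\mathrm{cd}}(\Lambda)\leq\underline{\mathrm{cd}}(N)+1$), which is fine.
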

\begin{proof} 
The group $\Lambda$ fits into the following commutative diagram with exact rows and surjective vertical maps
\[   \xymatrix{  1 \ar[r] & N \ar[r] \ar[d]^{\mathrm{Id}} & \Lambda \ar[r] \ar[d] & \mathbb{Z} \ar[d] \ar[r]  & 1 \\
                1 \ar[r] & N \ar[r]  & \G \ar[r]^{\pi}  & {C}_d  \ar[r]  & 1. } \]
Note that $\Lambda$ has a infinite cyclic normal subgroup $H$ such that $\Lambda/H=\mathrm{Comm}_{\Lambda}[H]/H=\G$. Therefore, it follows from Lemma \ref{lemma: virt poly} that $$\mathrm{cd}_{\mathcal{F}[H]}(\Lambda)=\mathrm{cd}_{\mathcal{F}[H]}(\mathrm{Comm}_{\Lambda}[H])=\underline{\mathrm{cd}}(\G)\geq \underline{\mathrm{cd}}(N)+s.$$

\noindent Using Lemma \ref{lemma: fam[H]}, we obtain 
\begin{eqnarray*}
\underline{\mathrm{cd}}(N)+s&\leq& \mathrm{cd}_{\mathcal{F}[H]}(\Lambda)\\
 &\leq& \max\{\underline{\underline{\mathrm{cd}}}(\Lambda),\underline{\mathrm{cd}}(\Lambda)\}\\
&\leq& \max\{\underline{\underline{\mathrm{cd}}}(\Lambda),\underline{\mathrm{cd}}(N)+1\},
\end{eqnarray*}
which implies that $\underline{\underline{\mathrm{cd}}}(\Lambda)\geq \underline{\mathrm{cd}}(N)+s$. 
\end{proof}

\begin{example}\label{ex: counterex} \rm \indent In  \cite{Martinez12}, Mart\'{\i}nez-P\'{e}rez constructs, for any odd prime $p$, a Bestvina-Brady group $H_{L_p}$ of type $F$ and a semi-direct product $\G=H_{L_p} \rtimes C_p$ such that $\mathrm{cd}(H_{L_p})=3$ but $\underline{\mathrm{cd}}(\G)=4$. To put this into the context of Theorem C, we give a few details of this  construction as presented in Example 3.6 of \cite{Martinez12}.

Consider the real projective plane $\mathbb{R}P^2$ with the standard CW-structure consisting of one cell in each dimension and let $p$ be an odd prime. By a theorem of Jones (see \cite[Theorem 1.1]{Jones}), $\mathbb{R}P^2$ can be embedded in a contractible $3$-dimensional finite $C_p$-CW-complex $X$ such that $X^{C_p}=\mathbb{R}P^2$. Let $L$ be the flag triangulation of $X$ and $H_{L}$ the associated Bestvina-Brady group. Set $\G=H_{L}\rtimes C_p$ where the action of $C_p$ on $H_{L}$ is induced by the action of $C_p$ on the flag complex $L$. Mart\'{\i}nez-P\'{e}rez then obtains  the cohomological finiteness conditions for $\G$ stated above.

Now, we let $\{p_i \; | \; i=1, \dots , s\}$ be a collection of distinct odd primes and for each $i$,  denote by  $L_i$  the flag complex associated to the prime $p_i$ in the  construction of Mart\'{\i}nez-P\'{e}rez. Note that the hypothesis of Theorem C are satisfied for $q=2$ because $n_i=3$ and $L^{C_{p_i}}=\mathbb{R}P^2$ for all $i$. Therefore, we can conclude that for each $s \in \mathbb{N}$,  the group $N_s= \prod_{i=1}^s H_{L_i}$ is an integral linear group of type $F$ and there is a semi-direct product $\G_s=N_s \rtimes C_d$,  where $d=\prod_{i=1}^s p_i$  such that $$\mathrm{hd}(N_s)=\mathrm{cd}(N_s)=3s \;\;\; \mbox{ and } \;\;\; \underline{\mathrm{hd}}(\G_s)=\underline{\mathrm{cd}}(\G_s)=4s.$$
Note that this implies \[\delta(N_s)-\underline{\mathrm{gd}}(N_s)\geq s\] for every $s \in \mathbb{N}$. Moreover, taking any $s\geq 2$, we obtain $$\mathrm{gd}(\Lambda_s)+s-1\leq \underline{\underline{\mathrm{gd}}}(\Lambda_s)<\infty $$
where $\Lambda_s=N_s\rtimes \Z$ an integral linear group of type $F$ formed by taking the pull-back of $\G_s$ under the projection of $\Z$ onto $C_d$. 
\end{example}

\begin{remark}\rm \label{remk: Question of Lafont}  In \cite[46.4]{Lafont}  (see also \cite{guido}), Lafont asked which finitely generated groups $\G$ have finite ${\underline{\mathrm{gd}}}(\G)$ but infinite $\underline{\underline{\mathrm{gd}}}(\G)$. Next, we explain how the invariant $\delta(N)$ is connected to this question.

Suppose there exists a  countable group $N$ such that $\underline{\mathrm{gd}}(N)<\infty$ but  $\delta(N)=\infty$. This means that there is an infinite sequence of subgroups $\{K_i\}$ of $N$ and a collection of groups $\{\Gamma_i\}$   where $\G_i$ is an extension of $K_i$ by a finite cyclic group $C_{d_i}$ such that 
 $$i\leq\underline{\mathrm{cd}}(\G_i)<\infty,$$
 for each $i\in \mathbb N$. Let $\Lambda_i=K_i\rtimes \Z$ be the pull-back of $\G_i$ under projection of $\Z$ onto $C_{d_i}$ and let $H_i$ be the infinite cyclic central subgroup of $\Lambda_i$ such that $\Lambda_i/H_i= \G_i$ for each $i\in \mathbb N$.
Proceeding as in the proof of Lemma \ref{lem: delta(N)}, we obtain $\sup_{i \in \mathbb{N}}\{\mathrm{cd}_{\mathcal{F}[H_i]}(\Lambda_i)\}=\infty$. Since $\underline{\mathrm{cd}}(\Lambda_i)\leq \underline{\mathrm{cd}}(N)+1$  for each $i$, by Lemma \ref{lemma: fam[H]}, we have  that  $\sup_{i \in \mathbb{N}}\{\underline{\underline{\mathrm{cd}}}(\Lambda_i)\}=\infty$.

Define $\Lambda$ to be the infinite free product $*_{i\in \mathbb N} \Lambda_i$. Clearly,  $\underline{\underline{\mathrm{gd}}}(\Lambda)=\infty$. Since $\Lambda$ acts on a tree  with each vertex stabilizer conjugate to  $\Lambda_i$ for some $i\in \mathbb N$, using Corollary 4.1 of \cite{DemPetTal}, it is not difficult to check that ${\underline{\mathrm{cd}}}(\Lambda)\leq {\underline{\mathrm{cd}}}(N)+2$. In the next proposition we will show that $\Lambda$ can be embedded into a 2-generated group with the same Bredon cohomological dimension for the family of finite subgroups. 

To summarize, we have just shown that if there exists a  countable group $N$ such that $\underline{\mathrm{gd}}(N)<\infty$ but $\delta(N)=\infty$. Then, there exists a 2-generated group $\G$ such that $\underline{\mathrm{gd}}(\G)<\infty$ but $\underline{\underline{\mathrm{gd}}}(\G)=\infty$.
\end{remark}

\begin{proposition} Let $\Gamma$ be a countable group. Then $\Gamma$ can be embedded into a 2-generated group $H$ such that 
$$\underline{\mathrm{cd}}(\Gamma)\leq \underline{\mathrm{cd}}(H)\leq \mathrm{max}\{\underline{\mathrm{cd}}(\Gamma), 2\}.$$
\end{proposition}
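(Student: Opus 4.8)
The plan is to embed $\Gamma$ into a $2$-generated group using a classical HNN-style construction à la Higman–Neumann–Neumann, and to control the Bredon cohomological dimension for the family of finite subgroups by means of the Mayer–Vietoris sequences available for amalgamated products and HNN extensions (as in \cite{MartinezNucinkis} and \cite{DemPetTal}), together with the fact that the groups we adjoin are infinite cyclic, hence of Bredon cohomological dimension $1$. More precisely, since $\Gamma$ is countable, write $\Gamma=\{g_0=e,g_1,g_2,\ldots\}$ and realize the standard embedding of $\Gamma$ into a $2$-generated group in two steps: first embed $\Gamma$ into $\Gamma*\Z$, and then inside a larger group one adjoins a stable letter $t$ conjugating a fixed infinite-order element onto a sequence of elements that together with one more generator generate the whole group. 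Each step is either an amalgamated free product over a (trivial or infinite cyclic) subgroup, or an ascending HNN extension with infinite cyclic associated subgroups.

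The key steps, in order, are as follows. First I would recall that for a free product $A*B$, an amalgamated product $A*_CB$, or an HNN extension $A*_C$, one has Mayer–Vietoris exact sequences in Bredon cohomology for the family of finite subgroups relating $\underline{\mathrm{cd}}$ of the total group to those of the vertex and edge groups; in particular $\underline{\mathrm{cd}}(A*_CB)\leq \max\{\underline{\mathrm{cd}}(A),\underline{\mathrm{cd}}(B),\underline{\mathrm{cd}}(C)+1\}$ and similarly $\underline{\mathrm{cd}}(A*_C)\leq\max\{\underline{\mathrm{cd}}(A),\underline{\mathrm{cd}}(C)+1\}$. Since all the edge groups in our construction are trivial or infinite cyclic, $\underline{\mathrm{cd}}(C)+1\leq 2$, and each vertex group is either $\Gamma$ or a free group (of $\underline{\mathrm{cd}}$ at most $1$). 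Second, I would carry out the embedding: set $G_1=\Gamma*\langle s\rangle$ with $s$ of infinite order, and then form $G_2=G_1*_{\psi}$, an HNN extension with stable letter $t$ where the associated (infinite cyclic) subgroups are chosen so that, inside $G_2$, conjugates of $s$ by powers of $t$ recover all the $g_i$; then $G_2=\langle s,t\rangle$ is $2$-generated and contains $\Gamma$. One checks at each stage that $\underline{\mathrm{cd}}$ does not exceed $\max\{\underline{\mathrm{cd}}(\Gamma),2\}$, while the lower bound $\underline{\mathrm{cd}}(\Gamma)\leq\underline{\mathrm{cd}}(H)$ is immediate because restriction of a model for $\underline{E}H$ along $\Gamma\hookrightarrow H$ is a model for $\underline{E}\Gamma$ (as recalled in Section 2). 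Finally I would verify that the construction preserves the ambient finiteness assumptions needed — namely that all associated subgroups are genuinely infinite cyclic so that the edge contributions are bounded by $2$ rather than something larger.

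The main obstacle I anticipate is bookkeeping rather than a deep difficulty: one must choose the HNN data so that the resulting group is genuinely $2$-generated (this is the content of the classical Higman–Neumann–Neumann trick, so it is routine but must be set up carefully, e.g.\ using that a suitable element of $G_1$ together with $t$ generates everything once $t$ spreads $s$ over a generating set of $\Gamma$), and one must make sure that the associated subgroups of the HNN extension are infinite cyclic — which forces the choice of $s$ to have infinite order and the conjugating relations to be consistent, so that Britton's lemma applies and the embedding is faithful. Once the construction is pinned down, the cohomological estimate is a one-line application of the Mayer–Vietoris sequence. An alternative, slightly cleaner route avoiding an explicit HNN presentation would be to invoke directly the general embedding theorem of Higman–Neumann–Neumann for countable groups and then note that the standard proof exhibits $H$ as built from $\Gamma$ by a finite sequence of free and HNN constructions with infinite cyclic edge groups; either way the cohomological input is the same.
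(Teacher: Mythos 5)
The cohomological half of your plan coincides with the paper's: the paper follows the Higman--Neumann--Neumann embedding of \cite{HNN} and bounds $\underline{\mathrm{cd}}$ at each stage by Mayer--Vietoris arguments for amalgams and HNN extensions (\cite[4.10, 4.11]{Luck2}, plus a generalization to a wedge of circles), using that every edge group occurring is free and hence of Bredon dimension at most $1$; the lower bound is by restriction to the subgroup $\Gamma$, as you say. The genuine gap is in your group-theoretic construction. A single HNN extension $G_2=G_1\ast_{\psi}$ of $G_1=\Gamma\ast\langle s\rangle$ with one stable letter $t$ and \emph{infinite cyclic} associated subgroups cannot be the $2$-generated group you want: by Britton's lemma the conjugates $t^{-i}st^{i}$ either do not lie in $G_1$ at all or (in the ascending case) remain inside a single cyclic subgroup, so they cannot ``recover all the $g_i$''; and more bluntly, $H_1(G_2)$ is the quotient of $H_1(\Gamma)\oplus\Z^2$ by a single relation, so for $\Gamma=\bigoplus_{i\in\mathbb{N}}C_2$ the group $G_2$ is not even $2$-generated in abelianization. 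The classical trick unavoidably uses infinitely generated data: either infinitely many stable letters with infinite cyclic edge groups, or amalgamations/HNN extensions whose associated subgroups are free of infinite rank.

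Your fallback -- invoke the HNN embedding theorem and track its steps -- is exactly the paper's route, but your description of those steps needs correcting: after $K=\Gamma*\Z$ one forms a \emph{multiple} HNN extension $P$ (the fundamental group of a possibly infinite wedge of circles with vertex group $K$ and all edge groups $\Z$), so one needs the Mayer--Vietoris argument for a graph of groups with infinitely many edges, not just for a single HNN extension; then $Q$ is an amalgam of $P$ with a free group of rank $2$ over an infinitely generated free subgroup; and finally $H$ is an HNN extension of $Q$ whose associated subgroups are free of rank $2$. Thus the edge groups are free but by no means all infinite cyclic, and the sequence of cyclic-edge steps is not finite. Since free groups have $\underline{\mathrm{cd}}\leq 1$, each stage still yields $\underline{\mathrm{cd}}\leq\max\{\underline{\mathrm{cd}}(\Gamma),2\}$, so the estimate survives; but as written, your construction of $H$ does not produce the required embedding, and that construction is the substance of the proof.
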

\begin{proof} Following the  proof of Higman-Neumann-Neumann in \cite[Theorem IV]{HNN}, note that first, $\Gamma$ is  embedded in a group $K$ which is a free product of $\Gamma$ and $\Z$.  The Mayer-Vietoris sequence associated to the push-out construction of  a model for $\underline{E}K$ in Example 4.10 of \cite{Luck2} then implies that $$\underline{\mbox{cd}}(K)\leq \mbox{max}\{\underline{\mbox{cd}}(\Gamma), \underline{\mbox{cd}}(\Z)\}=\underline{\mbox{cd}}(\Gamma).$$ 

Next, the group $P$ is defined as the fundamental group of a graph of groups which is a (possibly infinite) wedge of circles with $K$ as its vertex group and the edge subgroups  all isomorphic to $\Z$. Similarly, using a generalization of the construction in \cite[4.11]{Luck2} to a wedge of circles, one can deduce that $\underline{\mbox{cd}}(P)\leq \max\{\underline{\mbox{cd}}(K),2\}$.
 
 Now, the group $Q$ is defined as an amalgamated free product of $P$ and a free group on 2-generators. Again,  by \cite[4.10]{Luck2} and the fact that non-abelian free groups have cohomological dimension 1, it follows that 
 $$\underline{\mbox{cd}}(Q)\leq \mbox{max}\{\underline{\mbox{cd}}(P), 2\}.$$ 
 In \cite{HNN}, it is also shown that $Q$ is generated by the elements $u$, $v$, and $b$ such that both $u$, $b$ and $b$,  $v$ generate free subgroups of $Q$. Finally, the group $H$ defined as 
$$\langle Q, a \;|\; a^{-1}ba=u, a^{-1}va=b\rangle$$ 
is $2$-generated, and can be seen as an HNN-extension of $Q$  relative to the isomorphism $$\theta: \langle b, v\rangle \to \langle u, b\rangle: \;\; b\mapsto u, \; v\mapsto b.$$ Hence, by \cite[4.11]{Luck2}, we have $$\underline{\mbox{cd}}(H)\leq \mbox{max}\{\underline{\mbox{cd}}(Q), 2\}.$$
The  result now follows from combining the above inequalities.
\end{proof}

\section{The Mayer-Vietoris sequence of L\"{u}ck and Weiermann}
Let $\mathcal{F} \subseteq \mathcal{H}$ be two families of subgroups of a group $G$ such that the set $\mathcal{S}=\mathcal{H}\smallsetminus \mathcal{F}$ is equipped with a strong equivalence relation.
Let $[H]$ be the equivalence class represented by $H \in \mathcal{S}$, denote the set of equivalence classes by $[\mathcal{S}]$ and let $\mathcal{I}$ be a complete set of representatives $[H]$ of the orbits of the conjugation action of $G$ on $[\mathcal{S}]$. L\"{u}ck and Weiermann obtained the following Mayer-Vietoris long exact sequence from the push-out diagram of Theorem $2.3.$ in \cite{LuckWeiermann}.
\begin{proposition}[L\"{u}ck-Weiermann, \cite{LuckWeiermann}] \label{prop: mayer-vietoris} Let $M \in \mbox{Mod-}\mathcal{O}_{\mathcal{H}}G$. There exists a long exact cohomology sequence
\[ \ldots \rightarrow \mathrm{H}^{i}_{\mathcal{H}}(G,M) \rightarrow \Big(\prod_{[H] \in \mathcal{I}} \mathrm{H}^{i}_{\mathcal{F}[H] }( \mathrm{N}_{G}[H],M)\Big)\oplus  \mathrm{H}^{i}_{\mathcal{F}}(G,M) \rightarrow  \prod_{[H] \in \mathcal{I}} \mathrm{H}^{i}_{\mathcal{F}\cap  \mathrm{N}_{G}[H] }( \mathrm{N}_{G}[H],M)  \]
\[ \rightarrow \mathrm{H}^{i+1}_{\mathcal{H}}(G,M) \rightarrow \ldots \ . \]
\end{proposition}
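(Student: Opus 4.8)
\emph{Proof strategy.} The plan is to bypass the topological push-out of \cite{LuckWeiermann} entirely: I will produce, by a direct combinatorial computation, a short exact sequence of $\mathcal{O}_{\mathcal{H}}G$-modules whose long exact $\mathrm{Ext}$-sequence is precisely the one asserted, and then note that the very same short exact sequence immediately yields the homological (Tor) and two-variable analogues. The push-out of \cite{LuckWeiermann} then plays only a motivating role and is never invoked.

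First I record the algebraic descriptions, as $\mathrm{Ext}$-groups over the single category $\mathcal{O}_{\mathcal{H}}G$, of the three kinds of Bredon cohomology occurring in the sequence. For a family $\mathcal{F}'$ of subgroups of $G$ contained in $\mathcal{H}$, let $\underline{\mathbb{Z}}_{\mathcal{F}'}$ be the $\mathcal{O}_{\mathcal{H}}G$-module with $\underline{\mathbb{Z}}_{\mathcal{F}'}(G/L)=\mathbb{Z}$ for $L\in\mathcal{F}'$ and $\underline{\mathbb{Z}}_{\mathcal{F}'}(G/L)=0$ otherwise; this is a well-defined functor because a morphism $G/L_1\to G/L_2$ with $L_2\in\mathcal{F}'$ forces $L_1\in\mathcal{F}'$. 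The augmented cellular chain complex of a model for $E_{\mathcal{F}'}G$ is a free $\mathcal{O}_{\mathcal{H}}G$-resolution of $\underline{\mathbb{Z}}_{\mathcal{F}'}$ (all of its cells have stabilizers in $\mathcal{F}'\subseteq\mathcal{H}$, and its value at $G/L$ computes $\mathrm{H}_\ast((E_{\mathcal{F}'}G)^L)$), which together with the identity $\mathrm{Hom}_{\mathcal{O}_{\mathcal{H}}G}(\mathbb{Z}[-,G/K],-)=\mathrm{Hom}_{\mathcal{O}_{\mathcal{F}'}G}(\mathbb{Z}[-,G/K],\mathrm{res}(-))$ for $K\in\mathcal{F}'$ gives $\mathrm{Ext}^{i}_{\mathcal{O}_{\mathcal{H}}G}(\underline{\mathbb{Z}}_{\mathcal{F}'},M)\cong \mathrm{H}^{i}_{\mathcal{F}'}(G,M)$, naturally in $M$. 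Running the same argument with $\mathrm{N}_{G}[H]$ in place of $G$ and inducing up along $\mathrm{N}_{G}[H]\le G$, for each $[H]\in\mathcal{I}$ one obtains $\mathcal{O}_{\mathcal{H}}G$-modules $B_{[H]}$ and $C_{[H]}$, namely $\mathrm{H}_0$ of $G\times_{\mathrm{N}_{G}[H]}E_{\mathcal{F}[H]}\mathrm{N}_{G}[H]$ and of $G\times_{\mathrm{N}_{G}[H]}E_{\mathcal{F}\cap\mathrm{N}_{G}[H]}\mathrm{N}_{G}[H]$, so that $B_{[H]}(G/L)$ is free abelian on $\{g\mathrm{N}_{G}[H]\in G/\mathrm{N}_{G}[H] : g^{-1}Lg\in\mathcal{F}[H]\}$ and similarly for $C_{[H]}$; comparing cells in the corresponding free resolutions yields the Shapiro isomorphisms $\mathrm{Ext}^{i}_{\mathcal{O}_{\mathcal{H}}G}(B_{[H]},M)\cong \mathrm{H}^{i}_{\mathcal{F}[H]}(\mathrm{N}_{G}[H],M)$ and $\mathrm{Ext}^{i}_{\mathcal{O}_{\mathcal{H}}G}(C_{[H]},M)\cong \mathrm{H}^{i}_{\mathcal{F}\cap\mathrm{N}_{G}[H]}(\mathrm{N}_{G}[H],M)$.

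The heart of the proof is the construction of a short exact sequence of $\mathcal{O}_{\mathcal{H}}G$-modules
\[
0\longrightarrow \bigoplus_{[H]\in\mathcal{I}}C_{[H]}\xrightarrow{\ \alpha\ }\Big(\bigoplus_{[H]\in\mathcal{I}}B_{[H]}\Big)\oplus\underline{\mathbb{Z}}_{\mathcal{F}}\xrightarrow{\ \beta\ }\underline{\mathbb{Z}}\longrightarrow 0,
\]
where $\beta$ is the sum of the natural augmentations $B_{[H]}\to\underline{\mathbb{Z}}$ (induced by the canonical $G$-maps to $E_{\mathcal{H}}G$) and of $\underline{\mathbb{Z}}_{\mathcal{F}}\to\underline{\mathbb{Z}}$, and on each summand $C_{[H]}$ the map $\alpha$ is the difference of the natural inclusions into $B_{[H]}$ and into $\underline{\mathbb{Z}}_{\mathcal{F}}$ coming from $\mathcal{F}\cap\mathrm{N}_{G}[H]\subseteq\mathcal{F}[H]$ and $\mathcal{F}\cap\mathrm{N}_{G}[H]\subseteq\mathcal{F}$. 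All of these are genuine natural transformations, so exactness can be checked after evaluation at each $G/L$, $L\in\mathcal{H}$. If $L\in\mathcal{F}$ then $C_{[H]}(G/L)=B_{[H]}(G/L)$ (both free abelian on $\{g\mathrm{N}_{G}[H]:g^{-1}Lg\subseteq\mathrm{N}_{G}[H]\}$) and $\underline{\mathbb{Z}}_{\mathcal{F}}(G/L)=\underline{\mathbb{Z}}(G/L)=\mathbb{Z}$, so the sequence reduces to the obvious $0\to V\to V\oplus\mathbb{Z}\to\mathbb{Z}\to 0$. If $L\in\mathcal{S}=\mathcal{H}\smallsetminus\mathcal{F}$ then $\underline{\mathbb{Z}}_{\mathcal{F}}(G/L)=0=C_{[H]}(G/L)$ for every $[H]$, while $B_{[H]}(G/L)$ is free abelian on $\{g\mathrm{N}_{G}[H]: g^{-1}Lg\sim H\}$ (using that $K\sim H$ forces $K\subseteq\mathrm{N}_{G}[H]$); since $\mathcal{I}$ is a set of representatives for the $G$-orbits on $[\mathcal{S}]$ and $\mathrm{N}_{G}[H]$ is the $G$-stabilizer of $[H]$, exactly one pair $([H],g\mathrm{N}_{G}[H])$ satisfies $g^{-1}Lg\sim H$, so $\bigoplus_{[H]}B_{[H]}(G/L)\cong\mathbb{Z}$, the augmentation $\beta$ is an isomorphism there, and exactness follows.

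Applying $\mathrm{Ext}^{\ast}_{\mathcal{O}_{\mathcal{H}}G}(-,M)$ to this short exact sequence and using the identifications of the second paragraph (with $\mathrm{Ext}$ turning the direct sum over $\mathcal{I}$ into a product) yields exactly the long exact sequence of the statement. Feeding the same short exact sequence into $\mathrm{Tor}^{\mathcal{O}_{\mathcal{H}}G}_{\ast}(-,M)$, or into the bifunctors $\mathrm{Ext}$ and $\mathrm{Tor}$ with a second coefficient variable, produces the homological and two-variable versions referred to in the introduction. I expect the one genuinely delicate point to be the object-wise exactness check above — matching up the transition maps of the induced modules and, above all, pinning down the count ``exactly one pair $([H],g\mathrm{N}_{G}[H])$'' for $L\in\mathcal{S}$, which is precisely where the strong equivalence relation and the identification of $\mathrm{N}_{G}[H]$ as the stabilizer of $[H]$ are used, and which is the algebraic shadow of the fact that the L\"{u}ck--Weiermann push-out is indexed by the disjoint union over $\mathcal{I}$.
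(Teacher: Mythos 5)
Your proof is correct and is essentially the paper's own argument specialized to the module $\underline{\mathbb{Z}}$: your $C_{[H]}$, $B_{[H]}$ and $\underline{\mathbb{Z}}_{\mathcal{F}}$ are precisely the induced modules $\mathrm{ind}_{\mathcal{F}\cap \mathrm{N}_{G}[H]}^{\mathcal{H}}(\underline{\mathbb{Z}})$, $\mathrm{ind}_{\mathcal{F}[H]}^{\mathcal{H}}(\underline{\mathbb{Z}})$ and $\mathrm{ind}_{\mathcal{F}}^{\mathcal{H}}(\underline{\mathbb{Z}})$ of Section 7, your short exact sequence and its object-wise exactness check (including the count of exactly one pair $([H],g\mathrm{N}_{G}[H])$ for $L\in\mathcal{S}$) coincide with Proposition \ref{prop: short exact seq} evaluated at $\underline{\mathbb{Z}}$, and applying $\mathrm{Ext}^{\ast}_{\mathcal{O}_{\mathcal{H}}G}(-,M)$ together with your Shapiro identifications (done via classifying-space resolutions rather than via Lemma \ref{lemma: exact and project} and adjointness, a cosmetic difference) is exactly the derivation in Theorem \ref{th: ext}. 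The one small overreach, harmless for the statement at hand, is your closing claim about the two-variable generalizations: since $\underline{\mathbb{Z}}$ is built into your sequence, it yields the Bredon cohomology and homology sequences but not the full Ext/Tor sequences of Theorem \ref{th: ext} with an arbitrary first argument, for which one needs the paper's short exact sequence formed from a general module $M$ in place of $\underline{\mathbb{Z}}$.
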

A consequence of this long exact sequence is the following algebraic analogue of Theorem \ref{th: push out}.
\begin{theorem} \label{th: cd push out} Let $\mathcal{F} \subseteq \mathcal{H}$ be two families of subgroups of a group $G$ such that $\mathcal{S}=\mathcal{H}\smallsetminus \mathcal{F}$ is equipped with a strong equivalence relation. Denote the set of equivalence classes by $[\mathcal{S}]$ and let $\mathcal{I}$ be a complete set of representatives $[H]$ of the orbits of the conjugation action of $G$ on $[\mathcal{S}]$. If there exists a natural number $k$ such that for each $[H] \in \mathcal{I}$
\smallskip
\begin{itemize}
\item[-] $\mathrm{cd}_{\mathcal{F}\cap \mathrm{N}_{G}[H]}(\mathrm{N}_{G}[H]) \leq k-1$;
\medskip
\item[-] $\mathrm{cd}_{\mathcal{F}[H]}(\mathrm{N}_{G}[H]) \leq k$,
\end{itemize}
and such that $\mathrm{cd}_{\mathcal{F}}(G) \leq k$, then $\mathrm{cd}_{\mathcal{H}}(G) \leq k$.
\end{theorem}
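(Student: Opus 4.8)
The plan is to read off the conclusion directly from the Mayer--Vietoris long exact sequence of Proposition \ref{prop: mayer-vietoris}, whose algebraic construction is the substance of this section. Fix an arbitrary coefficient module $M \in \mbox{Mod-}\mathcal{O}_{\mathcal{H}}G$; since $\mathrm{cd}_{\mathcal{H}}(G)$ is by definition the supremum of the degrees in which some such cohomology is nonzero, it is enough to prove that $\mathrm{H}^{i}_{\mathcal{H}}(G,M)=0$ for every $i>k$. I would isolate from the long exact sequence the stretch
\[ \prod_{[H] \in \mathcal{I}} \mathrm{H}^{i-1}_{\mathcal{F}\cap \mathrm{N}_{G}[H]}(\mathrm{N}_{G}[H],M) \longrightarrow \mathrm{H}^{i}_{\mathcal{H}}(G,M) \longrightarrow \Big(\prod_{[H] \in \mathcal{I}} \mathrm{H}^{i}_{\mathcal{F}[H]}(\mathrm{N}_{G}[H],M)\Big) \oplus \mathrm{H}^{i}_{\mathcal{F}}(G,M) \]
and show that both flanking terms vanish, so that exactness squeezes $\mathrm{H}^{i}_{\mathcal{H}}(G,M)$ between two zero modules.

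For the term on the right: for each $[H]\in\mathcal{I}$ we have $i>k\geq\mathrm{cd}_{\mathcal{F}[H]}(\mathrm{N}_{G}[H])$, and the module $M$ restricts (along the evident functor $\mathcal{O}_{\mathcal{F}[H]}\mathrm{N}_{G}[H]\to\mathcal{O}_{\mathcal{H}}G$, which exists because $\mathcal{F}[H]\subseteq\mathcal{H}$) to a module over $\mathcal{O}_{\mathcal{F}[H]}\mathrm{N}_{G}[H]$; the defining property of the Bredon cohomological dimension kills this coefficient module, so each factor $\mathrm{H}^{i}_{\mathcal{F}[H]}(\mathrm{N}_{G}[H],M)$ is zero, and a product of zero modules is zero. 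Likewise $i>k\geq\mathrm{cd}_{\mathcal{F}}(G)$ gives $\mathrm{H}^{i}_{\mathcal{F}}(G,M)=0$. For the term on the left: since $i\geq k+1$ we have $i-1\geq k>k-1\geq\mathrm{cd}_{\mathcal{F}\cap \mathrm{N}_{G}[H]}(\mathrm{N}_{G}[H])$, so each $\mathrm{H}^{i-1}_{\mathcal{F}\cap \mathrm{N}_{G}[H]}(\mathrm{N}_{G}[H],M)$ vanishes as well. Exactness then forces $\mathrm{H}^{i}_{\mathcal{H}}(G,M)=0$, and as $M$ and $i>k$ were arbitrary, $\mathrm{cd}_{\mathcal{H}}(G)\leq k$.

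Once Proposition \ref{prop: mayer-vietoris} is available there is no genuine obstacle; the only point that demands care is the bookkeeping of degrees, and in particular why the hypothesis on $\mathcal{F}\cap \mathrm{N}_{G}[H]$ is stated with the sharper bound $k-1$: the connecting homomorphism of the Mayer--Vietoris sequence raises degree by one, so to annihilate $\mathrm{H}^{k+1}_{\mathcal{H}}(G,M)$ one needs the $\mathcal{F}\cap\mathrm{N}_{G}[H]$-cohomology to already vanish in degree $k$, i.e. exactly $\mathrm{cd}_{\mathcal{F}\cap \mathrm{N}_{G}[H]}(\mathrm{N}_{G}[H])\leq k-1$. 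One should also check in passing that $\mathcal{F}$, $\mathcal{F}[H]$ and $\mathcal{F}\cap\mathrm{N}_{G}[H]$ are honest subfamilies of the relevant ambient families so that the restrictions of $M$ are defined, but this is immediate from $\mathcal{F}\subseteq\mathcal{H}$ and $\mathcal{S}=\mathcal{H}\smallsetminus\mathcal{F}\subseteq\mathcal{H}$.
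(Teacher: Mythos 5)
Your proposal is correct and is exactly the argument the paper intends: Theorem \ref{th: cd push out} is stated there as a direct consequence of the Mayer--Vietoris sequence of Proposition \ref{prop: mayer-vietoris}, and your degree bookkeeping (including why the $\mathcal{F}\cap\mathrm{N}_{G}[H]$ hypothesis is $k-1$ rather than $k$, and the restriction of $M$ along the evident functors) fills in the vanishing argument precisely as one would.
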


Our goal in this section is to give an algebraic construction of the long exact sequence of Proposition \ref{prop: mayer-vietoris}. The added value of this approach is that our proof will produce long exact sequences for arbitrary Tor and Ext functors.

We start by recalling some basic facts about the (co)homology of an orbit category. We refer the reader to \cite[section 9]{Luck}, \cite{FluchThesis} and \cite{nucinkis} for more details.

Let  $\orb$ be the orbit category. In  $\orb$, every morphism $\varphi: G/H \rightarrow G/K$ is completely determined by $\varphi(H)$, since $\varphi(xH)=x\varphi(H)$ for all $x \in G$. Moreover, there exists a morphism \[G/H \rightarrow G/K : H \mapsto xK\] if and only if $x^{\scriptscriptstyle -1}Hx \subseteq K$. We denote the morphism \[\varphi: G/H \rightarrow G/K: H\mapsto xK\]  by $G/H \xrightarrow{x} G/K$. The set of morphisms from $G/H$ to $G/K$ is denoted by $\mathrm{Mor}(G/H,G/K)$.\\
\indent A \emph{right $\orb$-module} is a contravariant functor $M: \orb \rightarrow  \mathbb{Z}\mbox{-mod}$. The \emph{category of right $\orb$-modules} is denoted by $\orbmod$ and is defined by the objects that are all the right $\orb$-modules and the morphisms are all the natural transformations between the objects. A sequence \[0\rightarrow M' \rightarrow M \rightarrow M'' \rightarrow 0\]
in $\orbmod$ is called {\it exact} if it is exact after evaluating in $G/H$, for all $H \in \mathcal{F}$. Take $M \in \orbmod$ and consider the left exact functor:
\[ \nathom(M,-) : \orbmod \rightarrow  \mathbb{Z}\mbox{-mod}: N \mapsto \nathom(M,N), \]
where $\nathom(M,N)$ is the abelian group of all natural transformations from $M$ to $N$. By definition, $M$ is a projective $\orb$-module if and only if this functor is exact. It can be shown that $\orbmod$ is an abelian category that contains enough projective objects to construct projective resolutions. Hence, one can construct functors $\mathrm{Ext}^{\ast}_{\orb}(-,M)$ as the right derived functors of the contravariant right exact functor \[\nathom(-,M): \orbmod \rightarrow  \mathbb{Z}\mbox{-mod}: N \mapsto \nathom(N,M).\] These Ext functors have all the usual properties, and one has $\mathrm{H}^{\ast}_{\mathcal{F}}(G,M)=\mathrm{Ext}^{\ast}_{\orb}(\underline{\mathbb{Z}},M)$ by definition.

Similarly, the \emph{category of left $\orb$-modules} is denoted by $\mathcal{O}_{\mF}G\mbox{-Mod}$ and is defined by the objects that are all covariant functors from $\orb$ to $ \mathbb{Z}\mbox{-mod}$  and the morphisms are all the natural transformations between the objects. \\
\indent For any $L \in \mathcal{F}$, one can define the right $\orb$-module
\[ \mathbb{Z}[-,G/L]: \orb \rightarrow  \mathbb{Z}\mbox{-mod} : G/H \rightarrow \mathbb{Z}[G/H,G/L], \]
where $\mathbb{Z}[G/H,G/L]$ is the free abelian group generated by $\mathrm{Mor}(G/H,G/L)$. Here, a morphism $\varphi \in \mathrm{Mor}(G/H_1,G/H_2)$ is mapped to $$\mathbb{Z}[G/H_2,G/L] \rightarrow \mathbb{Z}[G/H_1,G/L]: \alpha \mapsto \alpha \circ \varphi.$$ Similarly, one can define the left $\orb$-module $\mathbb{Z}[G/L,-]$ for every  $L \in \mathcal{F}$.

Given $M \in \orbmod$ and $N \in \mathcal{O}_{\mF}G\mbox{-Mod}$, one defines their tensor product $M \otimes_{\orb}N$ to be the following abelian group
\[ M \otimes_{\orb}N = \bigoplus_{S \in \mathcal{F}}M(G/S)\otimes N(G/S) \big/ I, \]
where $I$ is the abelian group generated by the elements $M(\varphi)(m)\otimes n - m\otimes N(\varphi)(n)$, for all $\varphi \in \mathrm{Mor}(G/H,G/K)$, $m \in M(G/K)$, $n \in N(G/H)$ and for all $H,K \in \mathcal{F}$. We view elements of  $M \otimes_{\orb}N$ as finite sum of elementary tensors of the form $m\otimes n \in M(G/H)\otimes N(G/H)$ for all $H \in \mathcal{F}$, that satisfy $M(\varphi)(m)\otimes n = m\otimes N(\varphi)(n)$ for all $\varphi \in \mathrm{Mor}(G/H,G/K)$. When the group is clear from the context, we will denote $M \otimes_{\orb}N$ by $M \otimes_{\mathcal{F}}N$. \\
\indent Given $N \in \mathcal{O}_{\mF}G\mbox{-Mod}$, define the right exact functor
\[ -\otimes_{\orb}N :  \orbmod \rightarrow \mathbb{Z}\mbox{-mod}: M \rightarrow M \otimes_{\orb}N. \]
The functors $\mathrm{Tor}_{\ast}^{\orb}(-,N)$ are the left derived functors of this right exact functor. These Tor functors can be computed using projective resolutions, or more generally, using flat resolutions. The \emph{$n$-th Bredon homology} of the group $G$ with coefficients in $N \in  \mathcal{O}_{\mF}G\mbox{-Mod}$ is by definition \[\mathrm{H}_{\ast}^{\mathcal{F}}(G,N)=\mathrm{Tor}_{\ast}^{\orb}(\underline{\mathbb{Z}},N).\] There is a notion of \emph{Bredon homological dimension} of the group $G$ for the family $\mathcal{F}$, which is defined as the integer
\[      \mathrm{hd}_{\mathcal{F}}(G)=\sup\{    n \in \mathbb{N} \ | \ \exists N \in   \mathcal{O}_{\mF}G\mbox{-Mod}: \mathrm{H}_{n}^{\mathcal{F}}(G,N) \neq 0 \}.           \] 
In general, the inequality $\mathrm{hd}_{\mathcal{F}}(G)\leq \mathrm{cd}_{\mathcal{F}}(G)$ always holds. For countable groups $G$, one even has
\[\mathrm{hd}_{\mathcal{F}}(G)\leq \mathrm{cd}_{\mathcal{F}}(G)\leq \mathrm{hd}_{\mathcal{F}}(G)+1.\]
\indent Let $K$ be a group and let $\mathcal{V}$ be a family of subgroups of $K$. Associated to a functor $\pi: \mathcal{O}_{\mathcal{V}}K \rightarrow \orb$, one has a restriction functor
\[ \mathrm{res}_{\pi}: \orbmod \rightarrow \mbox{Mod-}\mathcal{O}_{\mathcal{V}}K : M \mapsto M \circ \pi \]
and an induction functor
\[ \mathrm{ind}_{\pi}: \mbox{Mod-}\mathcal{O}_{\mathcal{V}}K \rightarrow \orbmod: M \mapsto M(?) \otimes_{\mathcal{O}_{\mathcal{V}}K} \mathbb{Z}[-,\pi(?)]. \]
The functor $\mathrm{ind}_{\pi}$ is left adjoint to the functor $\mathrm{res}_{\pi}$. In particular, there is an isomorphism
\begin{equation} \label{eq: adjoint} \mathrm{Hom}_{\mathcal{O}_{\mathcal{F}}G}(\mathrm{ind}_{\pi}(M),N)\cong \mathrm{Hom}_{\mathcal{O}_{\mathcal{V}}K}(M,\mathrm{res}_{\pi}(N)) \end{equation}
that is natural in $M \in  \mbox{Mod-}\mathcal{O}_{\mathcal{V}}K$ and $N \in \orbmod$. 

\begin{conv} From now on, restriction functors will be omitted from notation.
\end{conv}
\indent Before we proceed, let us establish some notation. The induction functor associated to the functor
\[ \orb \rightarrow \mathcal{O}_{\mathcal{H}}G:  G/F \mapsto G/F \]
will be denoted by $\mathrm{ind}_{\mathcal{F}}^{\mathcal{H}}$.
The induction functor associated to the functor
\[ \mathcal{O}_{\mathcal{F}\cap  \mathrm{N}_{G}[H]} \mathrm{N}_{G}[H] \rightarrow  \mathcal{O}_{\mathcal{H}}G : \mathrm{N}_{G}[H]/F \mapsto G/F \]
will be denoted by $\mathrm{ind}_{\mathcal{F}\cap  \mathrm{N}_{G}[H]}^{\mathcal{H}}$. And finally, the induction functor associated to the functor
\[ \mathcal{O}_{\mathcal{F}[H]} \mathrm{N}_{G}[H] \rightarrow \mathcal{O}_{\mathcal{H}}G : \mathrm{N}_{G}[H]/F \mapsto G/F  \]
will be denoted by $\mathrm{ind}_{\mathcal{F}[H]}^{\mathcal{H}}$.

In the next three lemmas, we will prove some elementary properties of the induction functors introduced above.
\begin{lemma} \label{lemma: induction} Let $G$ be a group and let $\mathcal{F} \subseteq \mathcal{H}$ be an inclusion of families of subgroups of $G$. If $M \in \orbmod$, then $M$ and $\mathrm{ind}_{\mathcal{F}}^{\mathcal{H}}(M)$ are isomorphic as $\orb$-modules and $\mathrm{ind}_{\mathcal{F}}^{\mathcal{H}}(M)(G/S)=0$ for all $S \in \mathcal{H}\smallsetminus \mathcal{F}$.
\end{lemma}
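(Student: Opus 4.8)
The plan is to compute $\mathrm{ind}_{\mathcal{F}}^{\mathcal{H}}(M)$ directly from its definition as a tensor product over the orbit category. Recall that the induction functor associated to the inclusion functor $\iota\colon \orb \to \mathcal{O}_{\mathcal{H}}G$, $G/F \mapsto G/F$, sends $M$ to the $\mathcal{O}_{\mathcal{H}}G$-module
\[ \mathrm{ind}_{\mathcal{F}}^{\mathcal{H}}(M)(G/S) = M(?) \otimes_{\orb} \mathbb{Z}[G/S, \iota(?)] = M(?) \otimes_{\orb} \mathbb{Z}[G/S, ?], \]
for $S \in \mathcal{H}$. So the first step is to understand the bimodule $\mathbb{Z}[G/S,?]$ as a function of the variable $? = G/F$ with $F \in \mathcal{F}$, together with its left $\orb$-module structure.

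The key step is the following observation. If $S \in \mathcal{F}$, then $\mathbb{Z}[G/S,-]$ is exactly the representable (free) left $\orb$-module at $G/S$, and the Yoneda-type isomorphism $M(?) \otimes_{\orb} \mathbb{Z}[G/S,?] \cong M(G/S)$ (natural in $M$) gives $\mathrm{ind}_{\mathcal{F}}^{\mathcal{H}}(M)(G/S) \cong M(G/S)$; one checks that these isomorphisms are compatible with the morphisms $G/H \xrightarrow{x} G/K$ in $\orb$, so that $\mathrm{ind}_{\mathcal{F}}^{\mathcal{H}}(M)$ restricted to $\orb$ is isomorphic to $M$ as an $\orb$-module. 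If instead $S \in \mathcal{H} \smallsetminus \mathcal{F}$, then for every $F \in \mathcal{F}$ there is \emph{no} morphism $G/S \to G/F$ in $\mathcal{O}_{\mathcal{H}}G$: such a morphism would force a conjugate of $S$ to be contained in $F \in \mathcal{F}$, and since $\mathcal{F}$ is closed under conjugation and under taking subgroups this would put $S \in \mathcal{F}$, a contradiction. Hence $\mathbb{Z}[G/S, G/F] = 0$ for all $F \in \mathcal{F}$, and therefore $M(?) \otimes_{\orb} \mathbb{Z}[G/S,?]$ is a quotient of $\bigoplus_{F \in \mathcal{F}} M(G/F) \otimes \mathbb{Z}[G/S,G/F] = 0$, giving $\mathrm{ind}_{\mathcal{F}}^{\mathcal{H}}(M)(G/S) = 0$.

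I do not expect any serious obstacle here; the only point requiring a little care is verifying that the Yoneda isomorphisms $M(?) \otimes_{\orb} \mathbb{Z}[G/S,?] \cong M(G/S)$ assemble into a natural transformation of $\orb$-modules, i.e. that they commute with the maps induced by morphisms $G/H \xrightarrow{x} G/K$ — but this is the standard computation underlying the co-Yoneda lemma in functor categories, and it is routine. One may alternatively invoke the adjunction (\ref{eq: adjoint}): for $N \in \orb\text{-mod}$ viewed inside $\mathcal{O}_{\mathcal{H}}G\text{-mod}$ by extending by zero, $\mathrm{Hom}_{\mathcal{O}_{\mathcal{H}}G}(\mathrm{ind}_{\mathcal{F}}^{\mathcal{H}}(M), N) \cong \mathrm{Hom}_{\orb}(M, N)$, and since every $\orb$-module arises this way one concludes $\mathrm{ind}_{\mathcal{F}}^{\mathcal{H}}(M) \cong M$ on $\orb$ by Yoneda in the module category; the vanishing on $\mathcal{H}\smallsetminus\mathcal{F}$ then follows as above. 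Either route is short.
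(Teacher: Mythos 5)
Your main argument is correct and essentially the same as the paper's: the paper also uses the explicit evaluation isomorphism $M\otimes_{\mathcal{F}}\mathbb{Z}[G/H,-]\cong M(G/H)$, $m\otimes\varphi\mapsto M(\varphi)(m)$, for $H\in\mathcal{F}$ (your co-Yoneda step), together with the observation that for $S\in\mathcal{H}\smallsetminus\mathcal{F}$ and $F\in\mathcal{F}$ there are no morphisms $G/S\to G/F$ because $\mathcal{F}$ is closed under conjugation and subgroups, so the tensor product vanishes. Your proposal matches the paper's proof in both steps, so nothing further is needed.
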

\begin{proof}Let $H \in \mathcal{F}$.
Note that in $M\otimes_{\mathcal{O}_{\mathcal{F}}G} \mathbb{Z}[G/H,-]$, we have $m\otimes \varphi= M(\varphi)(m)\otimes \mathrm{Id}_H$ for any $m \in M(G/K)$ and $\varphi \in \mathrm{Mor}(G/H,G/K)$. Here $\mathrm{Id}_H$ denotes the identity morphism $G/H \rightarrow G/H$. Using this observation, one checks that the map \[\eta(H): M\otimes_{\mathcal{O}_{\mathcal{F}}G} \mathbb{Z}[G/H,-] \rightarrow M(G/H),\] that sends $m\otimes \varphi \in M(G/K)\otimes \mathbb{Z}[G/H,G/K]$ to $M(\varphi)(m)\in M(G/H)$, is a well-defined isomorphism. Moreover, it is easily seen that the maps $\eta(H)$ assemble to form a natural transformation $\eta : \mathrm{ind}_{\mathcal{F}}^{\mathcal{H}}(M) \rightarrow M$ of $\orb$-modules. This proves the first statement of the lemma. \\
\indent Now, let $S \in \mathcal{H}\smallsetminus \mathcal{F}$. By definition, we have $$\mathrm{ind}_{\mathcal{F}}^{\mathcal{H}}(M)(G/S)= M(-) \otimes_{\mathcal{F}} \mathbb{Z}[G/S,-].$$ If there existed a subgroup $F \in \mathcal{F}$ and a morphism $G/S \xrightarrow{x} G/F$, then this would imply that $S \in \mathcal{F}$ because $\mathcal{F}$ is subgroup and conjugation closed. We conclude that $M(-) \otimes_{\mathcal{F}} \mathbb{Z}[G/S,-]=0$. This completes the proof.
\end{proof}
\begin{lemma} \label{lemma: injective} Let $M \in \mbox{Mod-}\mathcal{O}_{\mathcal{H}}G$, $H \in \mathcal{S}$ and $K \in \mathcal{F}$. The natural map
\[ \mathrm{ind}_{\mathcal{F}\cap  \mathrm{N}_{G}[H]}^{\mathcal{H}}(M)(G/K) \rightarrow  \mathrm{ind}_{\mathcal{F}[H]}^{\mathcal{H}}(M)(G/K): m\otimes \varphi \mapsto m \otimes \varphi\]
is an isomorphism.
\end{lemma}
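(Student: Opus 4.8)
The plan is to unwind both sides of the map via the explicit description of $\otimes_{\mathcal{F}}$ given above and then verify by hand that it is a bijection. Write $G_0=\mathrm{N}_{G}[H]$ and let $\pi\colon\mathcal{O}_{\mathcal{F}[H]}G_0\to\mathcal{O}_{\mathcal{H}}G$ be the functor $G_0/F\mapsto G/F$, whose restriction to $\mathcal{O}_{\mathcal{F}\cap G_0}G_0$ is the functor underlying $\mathrm{ind}_{\mathcal{F}\cap \mathrm{N}_{G}[H]}^{\mathcal{H}}$. By definition and the explicit formula for the tensor product, $\mathrm{ind}_{\mathcal{F}[H]}^{\mathcal{H}}(M)(G/K)=M(?)\otimes_{\mathcal{O}_{\mathcal{F}[H]}G_0}\mathbb{Z}[G/K,\pi(?)]$ is the quotient of $\bigoplus_{F\in\mathcal{F}[H]}M(G/F)\otimes\mathbb{Z}[G/K,G/F]$ by the subgroup generated by the elements $M(\pi\alpha)(m)\otimes n-m\otimes(\pi\alpha\circ n)$ for all morphisms $\alpha\colon G_0/F_1\to G_0/F_2$ of $\mathcal{O}_{\mathcal{F}[H]}G_0$, and similarly for $\mathrm{ind}_{\mathcal{F}\cap \mathrm{N}_{G}[H]}^{\mathcal{H}}(M)(G/K)$ with $\mathcal{F}[H]$ replaced by $\mathcal{F}\cap G_0$; the map in the statement is the one induced by the inclusion of index sets $\mathcal{F}\cap G_0\hookrightarrow\mathcal{F}[H]$.

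The fact that drives the whole argument is the following factorization, which is where the hypothesis $K\in\mathcal{F}$ is used. If $F\in\mathcal{F}[H]$ and $\varphi\colon G/K\to G/F$ is a morphism of $\mathcal{O}_{\mathcal{H}}G$ represented by $g\in G$ (so $g^{-1}Kg\subseteq F$), then $K^{g}:=g^{-1}Kg$ lies in $\mathcal{F}$ and satisfies $K^{g}\subseteq F\subseteq G_0$, hence $K^{g}\in\mathcal{F}\cap G_0\subseteq\mathcal{F}[H]$; moreover $\varphi$ factors as $G/K\xrightarrow{\varphi_g}G/K^{g}\xrightarrow{\pi(\epsilon)}G/F$, where $\varphi_g$ is represented by $g$ and $\epsilon\colon G_0/K^{g}\to G_0/F$ is the morphism of $\mathcal{O}_{\mathcal{F}[H]}G_0$ represented by $e$ (the inclusion $K^{g}\subseteq F$). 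Applying the defining relation of the tensor product to $\epsilon$ gives $m\otimes\varphi=M(\pi\epsilon)(m)\otimes\varphi_g$ in $\mathrm{ind}_{\mathcal{F}[H]}^{\mathcal{H}}(M)(G/K)$, and the right-hand side comes from the summand indexed by $K^{g}\in\mathcal{F}\cap G_0$. Since elementary tensors of this form generate $\mathrm{ind}_{\mathcal{F}[H]}^{\mathcal{H}}(M)(G/K)$, the map is surjective.

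For injectivity I would produce a retraction. Define $r$ on $\bigoplus_{F\in\mathcal{F}[H]}M(G/F)\otimes\mathbb{Z}[G/K,G/F]$ by sending $m\otimes\varphi$ (with $\varphi\colon G/K\to G/F$) to itself when $F\in\mathcal{F}\cap G_0$, and to $M(\pi\epsilon)(m)\otimes\varphi_g$, in the notation above, when $F\in\mathcal{F}[H]\smallsetminus(\mathcal{F}\cap G_0)$ (i.e.\ $F\in\mathcal{S}$, $F\sim H$). One then checks that $r$ does not depend on the chosen representative $g$ of $\varphi$ — two choices differ by right multiplication by some $f\in F\subseteq G_0$, and the relation attached to the morphism $G_0/K^{gf}\to G_0/K^{g}$ of $\mathcal{O}_{\mathcal{F}\cap G_0}G_0$ represented by $f^{-1}$ identifies the two values of $r$ — and that $r$ annihilates every defining relation $M(\pi\alpha)(m)\otimes n-m\otimes(\pi\alpha\circ n)$ of $\mathrm{ind}_{\mathcal{F}[H]}^{\mathcal{H}}(M)(G/K)$. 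Here there is nothing to check when the source and target of $\alpha$ both lie in $\mathcal{F}\cap G_0$ (the relation is already a defining relation of $\mathrm{ind}_{\mathcal{F}\cap \mathrm{N}_{G}[H]}^{\mathcal{H}}(M)(G/K)$), there is no morphism $\alpha$ with source in $\mathcal{F}[H]\smallsetminus(\mathcal{F}\cap G_0)$ and target in $\mathcal{F}\cap G_0$ (this would force a member of $\mathcal{S}$ to be subconjugate to a member of $\mathcal{F}$), and in the two remaining cases one rewrites everything through subgroups in $\mathcal{F}\cap G_0$ using the factorization of the previous paragraph, reducing to relations in $\mathcal{O}_{\mathcal{F}\cap G_0}G_0$. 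Thus $r$ descends to a homomorphism $\mathrm{ind}_{\mathcal{F}[H]}^{\mathcal{H}}(M)(G/K)\to\mathrm{ind}_{\mathcal{F}\cap \mathrm{N}_{G}[H]}^{\mathcal{H}}(M)(G/K)$, and by construction its composite with the map of the statement is the identity; combined with surjectivity this shows the map is an isomorphism.

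The main obstacle is precisely the well-definedness of $r$: both the independence of the representative $g$ and the compatibility with the relations attached to morphisms $\alpha$ linking a subgroup of $\mathcal{F}\cap G_0$ with a subgroup of $\mathcal{F}[H]\smallsetminus(\mathcal{F}\cap G_0)$ (or two such subgroups) require, in each case, exhibiting an explicit morphism of $\mathcal{O}_{\mathcal{F}\cap G_0}G_0$ whose relation produces the needed identity — routine, but the bookkeeping has to be carried out carefully. As an alternative one can package the same argument via left Kan extensions: the map of the statement is the canonical comparison between two colimits indexed by categories of elements, and one checks that the evident functor between the index categories is initial because each relevant comma category is nonempty and connected, connectedness again following from the factorization of morphisms $G/K\to G/F$ through $G/K^{g}$.
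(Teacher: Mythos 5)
Your proposal is correct and takes essentially the same route as the paper: the heart of both arguments is the factorization of a morphism $G/K \to G/L$ with $L \in \mathcal{F}[H]$ as $G/K \xrightarrow{x} G/K^{x} \to G/L$, where $K^{x} \in \mathcal{F}\cap \mathrm{N}_{G}[H]$ because $K\in\mathcal{F}$, so that every elementary tensor in $\mathrm{ind}_{\mathcal{F}[H]}^{\mathcal{H}}(M)(G/K)$ is rewritten with index in $\mathcal{F}\cap\mathrm{N}_{G}[H]$. The paper leaves the ensuing bijectivity check implicit (``it is not difficult to check''), while you carry it out explicitly via the retraction $r$ (using, correctly, that no subgroup in $\mathcal{S}$ is subconjugate to one in $\mathcal{F}$); this is precisely the intended verification, so the two arguments coincide in substance.
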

\begin{proof}
Let $L \in \mathcal{F}[H]$, $m \in M(G/L)$ and (when possible) $\varphi : G/K \xrightarrow{x} G/L$ in $\mathbb{Z}[G/K,G/L]$. We have \[\Big(G/K \xrightarrow{x} G/L\Big)=\Big(  G/K \xrightarrow{x} G/K^{x} \rightarrow G/L \Big)\] and $K^{x} \in \mathcal{F}\cap  \mathrm{N}_{G}[H]$. This implies that
 \[m\otimes \varphi = M( G/K^{x} \rightarrow G/L )(m)\otimes (G/K \xrightarrow{x} G/K^{x}) \in M\otimes_{\mathcal{F}[H]}\mathbb{Z}[G/K,-].\]
This shows that every element of $M \otimes_{\mathcal{F}[H]}\mathbb{Z}[G/K,-]$ is a sum of elementary tensors of the form $n\otimes (G/K \xrightarrow{x} G/K^{x})$ for some $x \in G$ and some $n \in M(G/K^{x})$. Using this fact, it is not difficult to check that the map under consideration is an isomorphism.
\end{proof}
\begin{lemma} \label{lemma: exact and project}The induction functors  $\mathrm{ind}_{\mathcal{F}}^{\mathcal{H}}$, $\mathrm{ind}_{\mathcal{F}\cap  \mathrm{N}_{G}[H]}^{\mathcal{H}}$ and $\mathrm{ind}_{\mathcal{F}[H]}^{\mathcal{H}}$ are exact and preserve projectives.
\end{lemma}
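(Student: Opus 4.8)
The plan is to treat the three functors uniformly, by observing that each one is an induction functor $\mathrm{ind}_{\pi}$ attached to a functor $\pi\colon\mathcal{O}_{\mathcal{V}}L\to\mathcal{O}_{\mathcal{H}}G$ of the special shape $L/V\mapsto G/V$, where $L$ is a subgroup of $G$ and $\mathcal{V}$ is a family of subgroups of $L$ with $\mathcal{V}\subseteq\mathcal{H}\cap L$, the family of virtually cyclic subgroups of $L$. Indeed, for $\mathrm{ind}_{\mathcal{F}}^{\mathcal{H}}$ take $L=G$ and $\mathcal{V}=\mathcal{F}$; for $\mathrm{ind}_{\mathcal{F}\cap\mathrm{N}_{G}[H]}^{\mathcal{H}}$ and $\mathrm{ind}_{\mathcal{F}[H]}^{\mathcal{H}}$ take $L=\mathrm{N}_{G}[H]$ with $\mathcal{V}=\mathcal{F}\cap\mathrm{N}_{G}[H]$, respectively $\mathcal{V}=\mathcal{F}[H]$; in each case every subgroup in $\mathcal{V}$ is virtually cyclic, so the inclusion $\mathcal{V}\subseteq\mathcal{H}\cap L$ holds. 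Thus it suffices to prove that for every subgroup $L\leq G$ and every family $\mathcal{V}$ of subgroups of $L$ contained in $\mathcal{H}\cap L$, the functor $\mathrm{ind}_{\pi}$ is exact and preserves projectives.

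Preservation of projectives I would obtain formally. By the adjunction (\ref{eq: adjoint}), $\mathrm{ind}_{\pi}$ is left adjoint to $\mathrm{res}_{\pi}$, and $\mathrm{res}_{\pi}$ is exact because exactness of a sequence of $\mathcal{O}_{\mathcal{V}}L$-modules is tested on its values and $(\mathrm{res}_{\pi}N)(L/V)=N(\pi(L/V))$. A left adjoint of an exact functor sends projectives to projectives: if $P$ is projective then $\mathrm{Hom}_{\mathcal{O}_{\mathcal{H}}G}(\mathrm{ind}_{\pi}P,-)\cong\mathrm{Hom}_{\mathcal{O}_{\mathcal{V}}L}(P,\mathrm{res}_{\pi}(-))$ is a composite of exact functors, so $\mathrm{ind}_{\pi}P$ is projective.

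For exactness I would factor $\pi$ through the orbit category of $L$ for the larger family $\mathcal{H}\cap L$:
\[ \mathcal{O}_{\mathcal{V}}L\xrightarrow{\ \iota_{1}\ }\mathcal{O}_{\mathcal{H}\cap L}L\xrightarrow{\ \iota_{2}\ }\mathcal{O}_{\mathcal{H}}G,\qquad L/V\mapsto L/V\mapsto G/V. \]
Since $\mathrm{res}_{\iota_{1}}\circ\mathrm{res}_{\iota_{2}}=\mathrm{res}_{\pi}$, uniqueness of adjoints gives $\mathrm{ind}_{\pi}\cong\mathrm{ind}_{\iota_{2}}\circ\mathrm{ind}_{\iota_{1}}$, so it is enough to show each factor is exact. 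The functor $\mathrm{ind}_{\iota_{1}}$ is an induction functor for an inclusion of families on the single group $L$, so Lemma \ref{lemma: induction} applies with $L$ in the role of $G$: for every object $L/A$ of $\mathcal{O}_{\mathcal{H}\cap L}L$ the functor $M\mapsto\mathrm{ind}_{\iota_{1}}(M)(L/A)$ is naturally isomorphic to the evaluation $M\mapsto M(L/A)$ when $A\in\mathcal{V}$, and is the zero functor when $A\in(\mathcal{H}\cap L)\smallsetminus\mathcal{V}$; both are exact, and since exactness is tested objectwise, $\mathrm{ind}_{\iota_{1}}$ is exact. (When $L=G$ this already settles $\mathrm{ind}_{\mathcal{F}}^{\mathcal{H}}$, with $\iota_{2}$ the identity.)

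It remains to see that $\mathrm{ind}_{\iota_{2}}$ is exact, and this is where the only real work lies. Here $\iota_{2}$ is induction along the subgroup inclusion $L\leq G$ with the restricted family, for which I would use the Mackey-type decomposition expressing, for each object $G/A$ with $A\in\mathcal{H}$,
\[ \mathrm{ind}_{\iota_{2}}(M)(G/A)\ \cong\ \bigoplus M\bigl(L/(L\cap g^{-1}Ag)\bigr), \]
the sum being over a suitable set of representatives $g$ of the double cosets in $L\backslash G/A$; one derives this by decomposing the left $\mathcal{O}_{\mathcal{H}\cap L}L$-module $?\mapsto\mathbb{Z}[G/A,G/?]$ into a direct sum of representable modules and invoking the co-Yoneda identity $\mathbb{Z}[L/W,?]\otimes_{\mathcal{O}_{\mathcal{H}\cap L}L}N\cong N(L/W)$. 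The only place the families intervene is that every subgroup $L\cap g^{-1}Ag$ occurring here again lies in $\mathcal{H}\cap L$: indeed $g^{-1}Ag\in\mathcal{H}$ because $\mathcal{H}$ is closed under conjugation, and $L\cap g^{-1}Ag$ is a subgroup of $g^{-1}Ag$ contained in $L$, hence lies in $\mathcal{H}$ because $\mathcal{H}$ is closed under subgroups, so it lies in $\mathcal{H}\cap L$ and the right-hand side is a genuine direct sum of evaluation functors. A direct sum of exact functors being exact, $\mathrm{ind}_{\iota_{2}}$ is exact, and therefore so is $\mathrm{ind}_{\pi}=\mathrm{ind}_{\iota_{2}}\circ\mathrm{ind}_{\iota_{1}}$. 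The main obstacle is thus establishing the Mackey decomposition for $\mathrm{ind}_{\iota_{2}}$; this is standard homological algebra of orbit categories (see \cite{Luck}, \cite{FluchThesis}) and is the only step that is not a one-line formality.
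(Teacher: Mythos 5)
Your argument is correct and follows essentially the same route as the paper: projectives are preserved because each induction functor is left adjoint to an (exact) restriction functor, and exactness is obtained by factoring the induction as a family-inclusion induction on one group (exact by Lemma \ref{lemma: induction}) followed by an induction along a subgroup inclusion with the restricted family. The only real difference is at the second stage: where you sketch a proof of exactness of $\mathrm{ind}_{\iota_2}\colon \mbox{Mod-}\mathcal{O}_{\mathcal{H}\cap L}L \to \mbox{Mod-}\mathcal{O}_{\mathcal{H}}G$ via a double-coset decomposition of $\mathbb{Z}[G/A,G/?]$ and the co-Yoneda identity, the paper simply quotes this fact (Symonds, Lemma 2.9, or Fluch, Proposition 3.26) and composes in the other order for $\mathrm{ind}_{\mathcal{F}\cap \mathrm{N}_G[H]}^{\mathcal{H}}$ (subgroup induction into $\mathcal{O}_{\mathcal{F}}G$ first, then the family inclusion $\mathcal{F}\subseteq\mathcal{H}$), while for $\mathrm{ind}_{\mathcal{F}[H]}^{\mathcal{H}}$ the factorization is forced to be yours, since $\mathcal{F}[H]$ need not be the restriction of a family of subgroups of $G$. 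One small correction to your displayed Mackey formula: a morphism $G/A\to G/W$ with $W\leq L$ exists only when an entire conjugate $g^{-1}Ag$ is contained in $W\subseteq L$, so when you decompose the left module $?\mapsto\mathbb{Z}[G/A,G/?]$ into representables you get $\mathbb{Z}[L/g^{-1}Ag,?]$ summed only over those double cosets $LgA$ (or $AgL$) with $g^{-1}Ag\subseteq L$; the summands $M(L/(L\cap g^{-1}Ag))$ for the remaining double cosets do not occur (take $A=G$ and $L$ proper to see the naive formula fails). This does not affect your conclusion, since the corrected decomposition is still a direct sum of evaluations of $M$ at objects of $\mathcal{O}_{\mathcal{H}\cap L}L$ (your verification that the relevant subgroups lie in $\mathcal{H}\cap L$ applies verbatim to $g^{-1}Ag$), hence exact in $M$.
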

\begin{proof} All induction functors preserve projectives since they are left adjoint to a restriction functor which is exact.\\
\indent Let $\Gamma$ be a group and let $\mathcal{F} \subseteq \mathcal{H}$ be two families of subgroups of $\Gamma$. Then the induction functor associated to the functor \[ \mathcal{O}_{\mathcal{F}}\Gamma \rightarrow \mathcal{O}_{\mathcal{H}}\Gamma:  \Gamma/F \mapsto \Gamma/F\] is exact, by Lemma \ref{lemma: induction}. In particular, $\mathrm{ind}_{\mathcal{F}}^{\mathcal{H}}$ is exact. Now, let $\Gamma$ be a group, $K$ be a subgroup of $\Gamma$ and let $\mathcal{F}$ be a family of subgroups of $\Gamma$. Lemma 2.9 in \cite{Symonds} (or see Proposition 3.26 in \cite{FluchThesis}) shows that the induction functor associated to the functor $$ \mathcal{O}_{\mathcal{F}\cap K}K \rightarrow \mathcal{O}_{\mathcal{F}}\Gamma:  K/F \mapsto \Gamma/F$$ is exact. Since the functors $\mathrm{ind}_{\mathcal{F}\cap  \mathrm{N}_{G}[H]}^{\mathcal{H}}$ and $\mathrm{ind}_{\mathcal{F}[H]}^{\mathcal{H}}$ can be written as a composition of the two previous induction functors, we conclude that they are exact as well.
\end{proof}
The following proposition provides the main algebraic tool needed to construct the long exact Mayer-Vietoris sequence.
\begin{proposition}\label{prop: short exact seq} Let $M \in \mbox{Mod-}\mathcal{O}_{\mathcal{H}}G$. We have a short exact sequence
\[ 0 \rightarrow \bigoplus_{[H] \in \mathcal{I}}\mathrm{ind}_{\mathcal{F}\cap  \mathrm{N}_{G}[H]}^{\mathcal{H}}(M) \rightarrow  \Big(\bigoplus_{[H] \in \mathcal{I}}\mathrm{ind}_{\mathcal{F}[H]}^{\mathcal{H}}(M)\Big) \oplus \mathrm{ind}_{\mathcal{F}}^{\mathcal{H}}(M) \rightarrow M \rightarrow 0. \]
of $\mathcal{O}_{\mathcal{H}}G$-modules.
\end{proposition}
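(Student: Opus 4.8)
The plan is to construct the short exact sequence evaluated objectwise, i.e. at each $G/K$ with $K \in \mathcal{H}$, and check that the resulting maps assemble into natural transformations of $\mathcal{O}_{\mathcal{H}}G$-modules. By Lemma \ref{lemma: induction}, $\mathrm{ind}_{\mathcal{F}}^{\mathcal{H}}(M)$ is just $M$ on $\mathcal{O}_{\mathcal{F}}G$ and vanishes on $\mathcal{S} = \mathcal{H}\smallsetminus\mathcal{F}$; there is a canonical counit map $\varepsilon\colon \mathrm{ind}_{\mathcal{F}}^{\mathcal{H}}(M)\to M$ of $\mathcal{O}_{\mathcal{H}}G$-modules coming from the adjunction. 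Likewise, for each $[H]\in\mathcal{I}$ there are counit maps $\mathrm{ind}_{\mathcal{F}[H]}^{\mathcal{H}}(M)\to M$ and $\mathrm{ind}_{\mathcal{F}\cap\mathrm{N}_G[H]}^{\mathcal{H}}(M)\to M$; the left-hand map in the sequence is (minus) the natural transformation $\mathrm{ind}_{\mathcal{F}\cap\mathrm{N}_G[H]}^{\mathcal{H}}(M)\to \mathrm{ind}_{\mathcal{F}[H]}^{\mathcal{H}}(M)$ from Lemma \ref{lemma: injective} on the first coordinate and the composite $\mathrm{ind}_{\mathcal{F}\cap\mathrm{N}_G[H]}^{\mathcal{H}}(M)\to\mathrm{ind}_{\mathcal{F}}^{\mathcal{H}}(M)$ on the second (this composite exists because $\mathcal{F}\cap\mathrm{N}_G[H]\subseteq\mathcal{F}$, using Lemma \ref{lemma: exact and project} and the description of these functors as composites of induction along subgroup inclusions). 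The right-hand map is the difference of the counits. Functoriality and the fact that the composite of the two maps is zero are both formal once the maps are set up, so the real content is exactness, which I will verify by evaluating at each $G/K$.

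\textbf{Exactness at $G/K$ for $K\in\mathcal{F}$.} Here $\mathrm{ind}_{\mathcal{F}}^{\mathcal{H}}(M)(G/K)=M(G/K)$, and by Lemma \ref{lemma: injective} the two induction functors indexed by $[H]$ agree on $G/K$; call this common value $X_{[H]}(G/K)$. Inspecting the definition of induction as a tensor product, $X_{[H]}(G/K)$ is spanned by elementary tensors $n\otimes(G/K\xrightarrow{x}G/K^x)$ where $K^x$ lies in $\mathcal{F}\cap\mathrm{N}_G[H]$ or in $\mathcal{F}[H]$; but $\mathcal{F}[H]\setminus(\mathcal{F}\cap\mathrm{N}_G[H])$ consists of infinite virtually cyclic subgroups of $\mathrm{N}_G[H]$, so for $K\in\mathcal{F}$ these two sets of tensors coincide. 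Thus on $\mathcal{F}$-objects the left map is, coordinatewise on $[H]$, the identity into the $\mathrm{ind}_{\mathcal{F}[H]}^{\mathcal{H}}$-summand against the counit into the $\mathrm{ind}_{\mathcal{F}}^{\mathcal{H}}$-summand; the sequence reduces to the standard split-type exactness statement, and one checks directly that the diagonal-type left map is injective, that its image is exactly the kernel of the difference-of-counits map, and that the latter is surjective because the $\mathrm{ind}_{\mathcal{F}}^{\mathcal{H}}(M)(G/K)=M(G/K)$ summand already surjects onto $M(G/K)$ via $\varepsilon$, which is an isomorphism here.

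\textbf{Exactness at $G/K$ for $K\in\mathcal{S}$.} This is the crux. Here $\mathrm{ind}_{\mathcal{F}}^{\mathcal{H}}(M)(G/K)=0$, so the sequence becomes
\[ 0\to\bigoplus_{[H]\in\mathcal{I}}\mathrm{ind}_{\mathcal{F}\cap\mathrm{N}_G[H]}^{\mathcal{H}}(M)(G/K)\to\bigoplus_{[H]\in\mathcal{I}}\mathrm{ind}_{\mathcal{F}[H]}^{\mathcal{H}}(M)(G/K)\to M(G/K)\to 0. \]
For a fixed class $[H]$, $\mathrm{ind}_{\mathcal{F}[H]}^{\mathcal{H}}(M)(G/K)$ is spanned by tensors $m\otimes(G/K\xrightarrow{x}G/L)$ with $L\in\mathcal{F}[H]$; since $K\in\mathcal{S}$, a morphism $G/K\to G/L$ with $L\in\mathcal{F}[H]$ forces $K^x$ to be an infinite virtually cyclic subgroup of $L\subseteq\mathrm{N}_G[H]$, hence $K^x\sim K$ and $K^x\sim H$, so $x\in\mathrm{N}_G[H]$ — in other words such a morphism exists only when $[H]$ is the (unique, up to the $G$-action) class of $[K]$. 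The key combinatorial point, which I expect to be the main obstacle, is to pin down this uniqueness precisely in the presence of the $G$-conjugation action used to form $\mathcal{I}$: one must argue that among the chosen representatives $[H]\in\mathcal{I}$ there is exactly one, say $[H_0]$, and exactly one $G$-orbit of morphisms $G/K\to G/L$ into its associated family, and then identify $\mathrm{ind}_{\mathcal{F}[H_0]}^{\mathcal{H}}(M)(G/K)$ with $M(G/K)$ and $\mathrm{ind}_{\mathcal{F}\cap\mathrm{N}_G[H_0]}^{\mathcal{H}}(M)(G/K)$ with $0$ (the latter because $\mathcal{F}\cap\mathrm{N}_G[H_0]\subseteq\mathcal{F}$ forbids morphisms from $G/K$ with $K\in\mathcal{S}$, exactly as in the proof of Lemma \ref{lemma: induction}). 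Once this identification is in place, the displayed sequence degenerates to $0\to 0\to M(G/K)\xrightarrow{\mathrm{id}} M(G/K)\to 0$, which is exact. Finally, I would remark that the analogous short exact sequence of \emph{left} $\mathcal{O}_{\mathcal{H}}G$-modules holds by the same argument with induction of covariant functors, which is what is needed for the Tor version in the next results.
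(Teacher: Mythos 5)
Your proposal follows essentially the same route as the paper: define the maps objectwise (the second coordinate of the inclusion carrying a minus sign, the surjection being the sum of the evaluation maps), use Lemma \ref{lemma: induction} to kill $\mathrm{ind}_{\mathcal{F}}^{\mathcal{H}}(M)$ and $\mathrm{ind}_{\mathcal{F}\cap \mathrm{N}_G[H]}^{\mathcal{H}}(M)$ on $\mathcal{S}$-objects, use Lemma \ref{lemma: injective} to get exactness at $\mathcal{F}$-objects, and reduce the $\mathcal{S}$-case to showing that exactly one class $[H_0]\in\mathcal{I}$ contributes and that its contribution is all of $M(G/K)$. Two points of caution. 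First, your intermediate assertions in the $\mathcal{S}$-case, namely that $K^x\sim K$ and hence $x\in \mathrm{N}_G[H]$, are not true in general and are not what is needed; the correct deduction (and the one the paper makes) is that $K^x\subseteq L$ with $L\in\mathcal{S}$ forces $K^x\sim L\sim H$ by the strong-equivalence axioms, hence $K\sim {}^{x}H$, which by the definition of $\mathcal{I}$ pins down the unique admissible representative $[H_0]$ — also note the proposition is stated for arbitrary families with a strong equivalence relation, so the phrase ``infinite virtually cyclic subgroup'' should just be ``member of $\mathcal{S}$''. Second, the step you flag as the main obstacle, the identification $\mathrm{ind}_{\mathcal{F}[H_0]}^{\mathcal{H}}(M)(G/K)\cong M(G/K)$, is exactly where the paper does its real work: it shows that the tensor relations move every elementary tensor onto a single morphism $\varphi_0\colon G/K\xrightarrow{x}G/K^{x}$ (using that any other morphism factors as $\varphi_0$ followed by a morphism inside $\mathcal{O}_{\mathcal{F}[H_0]}\mathrm{N}_G[H_0]$), and that $\varphi_0$ is invertible in the orbit category, so $M(\varphi_0)$ is an isomorphism and $p(G/K)$ is both injective and surjective; your sketch gestures at this via ``one $G$-orbit of morphisms'' but this reduction is the content you would still need to write out.
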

\begin{proof}
First, notice that \[\mathrm{ind}_{\mathcal{F}\cap  \mathrm{N}_{G}[H]}^{\mathcal{H}}(M)(K)= \mathrm{ind}_{\mathcal{F}}^{\mathcal{H}}(M)(K)=0\] if $K \in \mathcal{S}$ and that $\mathrm{ind}_{\mathcal{F}}^{\mathcal{H}}(M)(K)=M(K)$ if $K \in \mathcal{F}$, by Lemma \ref{lemma: induction}. Suppose $K \in \mathcal{F}$.
Let us first define the map
\[ i(K): \bigoplus_{[H] \in \mathcal{I}}\mathrm{ind}_{\mathcal{F}\cap  \mathrm{N}_{G}[H]}^{\mathcal{H}}(M)(K) \rightarrow  \Big(\bigoplus_{[H] \in \mathcal{I}}\mathrm{ind}_{\mathcal{F}[H]}^{\mathcal{H}}(M)(K)\Big) \oplus M(G/K). \]
Let \[ m\otimes \varphi \in \mathrm{ind}_{\mathcal{F}\cap  \mathrm{N}_{G}[H]}^{\mathcal{H}}(M)(K)= M \otimes_{\mathcal{F}\cap  \mathrm{N}_{G}[H]}\mathbb{Z}[G/K,-] \] and define \[i(K)(m\otimes \varphi)=(m\otimes \varphi,-M(\varphi)(m)) \in  \mathrm{ind}_{\mathcal{F}[H]}^{\mathcal{H}}(M)(K) \oplus M(G/K)  .\] If $K \in \mathcal{S}$, then set $i(K)= 0$. One can now easily check that the maps $i(K)$, for $K \in \mathcal{H}$, assemble to form a natural transformation
\[ i: \bigoplus_{[H] \in \mathcal{I}}\mathrm{ind}_{\mathcal{F}\cap  \mathrm{N}_{G}[H]}^{\mathcal{H}}(M) \rightarrow  \Big(\bigoplus_{[H] \in \mathcal{I}}\mathrm{ind}_{\mathcal{F}[H]}^{\mathcal{H}}(M)(K)\Big) \oplus  \mathrm{ind}_{\mathcal{F}}^{\mathcal{H}}(M) . \]
Let $K \in \mathcal{F}$ and define the map
\[ p(K):  \Big(\bigoplus_{[H] \in \mathcal{I}}\mathrm{ind}_{\mathcal{F}[H]}^{\mathcal{H}}(M)(G/K)\Big) \oplus M(G/K) \rightarrow M(G/K): (m\otimes \varphi, n)\mapsto M(\varphi)(m)+n.\]
If $K \in \mathcal{S}$, then define
\[ p(K):  \Big(\bigoplus_{[H] \in \mathcal{I}}\mathrm{ind}_{\mathcal{F}[H]}^{\mathcal{H}}(M)(G/K)\Big) \rightarrow M(G/K) :  m\otimes \varphi \mapsto M(\varphi)(m).\]
One can check that the maps $p(K)$ assemble to form a natural transformation
\[ p:  \Big(\bigoplus_{[H] \in \mathcal{I}}\mathrm{ind}_{\mathcal{F}[H]}^{\mathcal{H}}(M)(G/K)\Big) \oplus  \mathrm{ind}_{\mathcal{F}}^{\mathcal{H}}(M)  \rightarrow M.\]
\indent We claim that the sequence
\[ 0 \rightarrow \bigoplus_{[H] \in \mathcal{I}}\mathrm{ind}_{\mathcal{F}\cap  \mathrm{N}_{G}[H]}^{\mathcal{H}}(M) \xrightarrow{i}  \Big(\bigoplus_{[H] \in \mathcal{I}}\mathrm{ind}_{\mathcal{F}[H]}^{\mathcal{H}}(M)\Big) \oplus \mathrm{ind}_{\mathcal{F}}^{\mathcal{H}}(M) \xrightarrow{p} M \rightarrow 0 \]
is exact. To prove this, first let $K \in \mathcal{F}$. Then $p(K)$ is obviously surjective and the injectivity of $i(K)$ follows from Lemma \ref{lemma: injective}.
Clearly, we also have $p(K)\circ i(K)=0$. If an element of \[\Big(\bigoplus_{[H] \in \mathcal{I}}\mathrm{ind}_{\mathcal{F}[H]}^{\mathcal{H}}(M)(K)\Big)\oplus M(G/K)\] is in the kernel of $p(K)$, then Lemma \ref{lemma: injective} implies that this element is in the image if $i(K)$. Now let $K \in \mathcal{S}$. We need to show that
\begin{equation} \label{eq: p} p(K):  \bigoplus_{[H] \in \mathcal{I}}M\otimes_{\mathcal{F}[H]}\mathbb{Z}[G/K,-] \rightarrow  M(G/K) \end{equation}
is an isomorphism. Consider $[H] \in \mathcal{I}$ and assume that there exists an $L \in \mathcal{F}[H]$ such that $\mathbb{Z}[G/K,G/L]$ is non-zero. Then there exists an $x \in G$ such that $K^{x} \subseteq L$. Since $L \sim H$, it follows that $K \sim {}^{x}H$. By definition of $\mathcal{I}$, there can be at most one $[H] \in \mathcal{I}$ such that $\mathbb{Z}[G/K,G/L]$ is non-zero for some $L \in \mathcal{F}[H]$. On the other hand, there must exist an $[S] \in \mathcal{I}$ such that $K^{x} \sim S$. This implies that $K^{x} \in \mathcal{F}[S]$, so $\mathbb{Z}[G/K,G/K^{x}]$ is non-zero. We conclude that there exists a unique $[S] \in \mathcal{I}$ such that (\ref{eq: p}) transforms to
\[  p(K):  M\otimes_{\mathcal{F}[S]}\mathbb{Z}[G/K,-] \rightarrow  M(G/K): m\otimes \varphi \mapsto M(\varphi)(m), \]
and such that $K^{x} \in \mathcal{F}[S]$ for some $x \in G$.
Denote the morphism $G/K \xrightarrow{x} G/K^{x}$ by $\varphi_0$.
Let $L \in \mathcal{F}[S]\smallsetminus \mathcal{F}\cap N_{G}[S]$ and consider (when possible) a morphism $\varphi: G/K \xrightarrow{y} G/L$. The existence of this morphism implies that $K \sim {}^{y}S$. Since we also have $K \sim {}^{x}S$, it follows that $x^{-1}y \in \mathrm{N}_{G}[S]$. By noting that $$\Big(G/K \xrightarrow{y} G/L\Big)=\Big(  G/K \xrightarrow{x} G/K^{x} \xrightarrow{x^{-1}y} G/L \Big) ,$$ we see that \[m\otimes \varphi = M( G/K^{x} \xrightarrow{x^{-1}y} G/L )(m)\otimes \varphi_0 \in M\otimes_{\mathcal{F}[S]}\mathbb{Z}[G/K,-],\] for any $m \in M(G/L)$. This shows that every element in $M\otimes_{\mathcal{F}[S]}\mathbb{Z}[G/K,-]$ is of the form $m \otimes \varphi_0$ for some $m \in M(G/K^{x})$.
Since the morphism $\varphi_0: G/K \xrightarrow{x} G/K^{x} $ has a two-sided inverse, the map $M(\varphi_0)$ is an isomorphism. It follows that $p(K)$ is an isomorphism.
\end{proof}
We can now prove the main result of this section.
\begin{theorem} \label{th: ext} Let $\mathcal{F} \subseteq \mathcal{H}$ be two families of subgroups of a group $G$ such that the set $\mathcal{S}=\mathcal{H}\smallsetminus \mathcal{F}$ is equipped with a strong equivalence relation. Let $[H]$ be the equivalence class represented by $H \in \mathcal{S}$, denote the set of equivalence classes by $[\mathcal{S}]$ and let $\mathcal{I}$ be a complete set of representatives $[H]$ of the orbits of the conjugation action of $G$ on $[\mathcal{S}]$. Take $M,N \in \mbox{Mod-}\mathcal{O}_{\mathcal{H}}G$ and $T \in \mathcal{O}_{\mathcal{H}}G \mbox{-Mod}$. Then, there exists a long exact sequence of Ext functors
\[ \ldots \rightarrow \mathrm{Ext}^{i}_{\mathcal{O}_{\mathcal{H}}G}(M,N) \rightarrow \Big(\prod_{[H] \in \mathcal{I}} \mathrm{Ext}^{i}_{ \mathcal{O}_{\mathcal{F}[H]}\mathrm{N}_{G}[H]}(M,N)\Big)\oplus  \mathrm{Ext}^{i}_{\mathcal{O}_{\mathcal{F}}G}(M,N) \rightarrow   \]
\[ \prod_{[H] \in \mathcal{I}} \mathrm{Ext}^{i}_{ \mathcal{O}_{\mathcal{F}\cap  \mathrm{N}_{G}[H]} \mathrm{N}_{G}[H]}(M,N) \rightarrow \mathrm{Ext}^{i+1}_{\mathcal{O}_{\mathcal{H}}G}(M,N) \rightarrow \ldots \ ,  \]
and a long exact sequence of Tor functors
\[  \ldots \rightarrow \bigoplus_{[H] \in \mathcal{I}} \mathrm{Tor}_{i}^{ \mathcal{O}_{\mathcal{F}\cap  \mathrm{N}_{G}[H]} \mathrm{N}_{G}[H]}(M,T) \rightarrow  \Big(\bigoplus_{[H] \in \mathcal{I}} \mathrm{Tor}_{i}^{\mathcal{O}_{\mathcal{F}[H]}\mathrm{N}_{G}[H]}(M,T)\Big)\oplus  \mathrm{Tor}_{i}^{\mathcal{O}_{\mathcal{F}}G}(N,T) \rightarrow   \]
\[ \mathrm{Tor}_{i}^{\mathcal{O}_{\mathcal{H}}G}(M,T) \rightarrow  \bigoplus_{[H] \in \mathcal{I}} \mathrm{Tor}_{i-1}^{ \mathcal{O}_{\mathcal{F}\cap  \mathrm{N}_{G}[H]} \mathrm{N}_{G}[H]}(M,T) \rightarrow \ldots \ . \]
\end{theorem}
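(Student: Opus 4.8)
The plan is to deduce both long exact sequences from the short exact sequence of $\mathcal{O}_{\mathcal{H}}G$-modules
\[ 0 \rightarrow \bigoplus_{[H] \in \mathcal{I}}\mathrm{ind}_{\mathcal{F}\cap  \mathrm{N}_{G}[H]}^{\mathcal{H}}(M) \rightarrow  \Big(\bigoplus_{[H] \in \mathcal{I}}\mathrm{ind}_{\mathcal{F}[H]}^{\mathcal{H}}(M)\Big) \oplus \mathrm{ind}_{\mathcal{F}}^{\mathcal{H}}(M) \rightarrow M \rightarrow 0 \]
furnished by Proposition \ref{prop: short exact seq}, by applying the appropriate (co)homological functor and then identifying the resulting terms. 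For the $\mathrm{Ext}$-sequence I would first apply the contravariant functor $\mathrm{Hom}_{\mathcal{O}_{\mathcal{H}}G}(-,N)$ and its right derived functors to this short exact sequence, producing a long exact sequence of $\mathrm{Ext}^{i}_{\mathcal{O}_{\mathcal{H}}G}(-,N)$-groups. Since $\mbox{Mod-}\mathcal{O}_{\mathcal{H}}G$ has enough projectives and a direct sum of projectives is again projective, $\mathrm{Ext}^{i}_{\mathcal{O}_{\mathcal{H}}G}(-,N)$ converts the direct sums over $\mathcal{I}$ occurring in the two outer terms into the corresponding products, which already puts the sequence into the shape claimed in the statement.

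The second step is to identify each factor $\mathrm{Ext}^{i}_{\mathcal{O}_{\mathcal{H}}G}(\mathrm{ind}_{\pi}(M),N)$, where $\pi$ is one of the three functors defining $\mathrm{ind}_{\mathcal{F}}^{\mathcal{H}}$, $\mathrm{ind}_{\mathcal{F}\cap \mathrm{N}_{G}[H]}^{\mathcal{H}}$ and $\mathrm{ind}_{\mathcal{F}[H]}^{\mathcal{H}}$. By Lemma \ref{lemma: exact and project} each $\mathrm{ind}_{\pi}$ is exact and preserves projectives, so if $P_{\ast}\rightarrow M$ is a projective resolution in the source category, then $\mathrm{ind}_{\pi}(P_{\ast})\rightarrow \mathrm{ind}_{\pi}(M)$ is a projective resolution in $\mbox{Mod-}\mathcal{O}_{\mathcal{H}}G$. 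The adjunction isomorphism (\ref{eq: adjoint}), being natural, then yields an isomorphism of cochain complexes between $\mathrm{Hom}_{\mathcal{O}_{\mathcal{H}}G}(\mathrm{ind}_{\pi}(P_{\ast}),N)$ and the Hom-complex of $P_{\ast}$ into the restriction of $N$ along $\pi$; taking cohomology gives $\mathrm{Ext}^{i}_{\mathcal{O}_{\mathcal{H}}G}(\mathrm{ind}_{\pi}(M),N)\cong \mathrm{Ext}^{i}(M,N)$ computed over the source category of $\mathrm{ind}_{\pi}$, i.e.\ over $\mathcal{O}_{\mathcal{F}}G$, over $\mathcal{O}_{\mathcal{F}\cap \mathrm{N}_{G}[H]}\mathrm{N}_{G}[H]$, or over $\mathcal{O}_{\mathcal{F}[H]}\mathrm{N}_{G}[H]$ respectively (with the restriction of $N$ suppressed by our convention). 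Substituting these isomorphisms back into the long exact sequence produces precisely the asserted sequence of $\mathrm{Ext}$-functors.

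For the $\mathrm{Tor}$-sequence I would argue dually: apply $-\otimes_{\mathcal{O}_{\mathcal{H}}G}T$ and its left derived functors to the same short exact sequence, and use that tensor products and homology commute with direct sums to bring the outer terms into the displayed form. The counterpart of the identification above is the natural isomorphism $\mathrm{ind}_{\pi}(L)\otimes_{\mathcal{O}_{\mathcal{H}}G}T\cong L\otimes_{\mathcal{O}_{\mathcal{V}}K}T$ (restriction of $T$ along $\pi$ being understood), which follows from associativity of the tensor product over orbit categories together with the elementary identity $\mathbb{Z}[-,G/L]\otimes_{\mathcal{O}_{\mathcal{H}}G}T\cong T(G/L)$; deriving this isomorphism via the resolution $\mathrm{ind}_{\pi}(P_{\ast})$ and the exactness of $\mathrm{ind}_{\pi}$ once more gives $\mathrm{Tor}_{i}^{\mathcal{O}_{\mathcal{H}}G}(\mathrm{ind}_{\pi}(M),T)\cong \mathrm{Tor}_{i}^{\mathcal{O}_{\mathcal{V}}K}(M,T)$, and substituting yields the $\mathrm{Tor}$-version of the sequence.

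I do not expect a serious obstacle here: essentially all of the genuine work is already contained in Proposition \ref{prop: short exact seq} and the preparatory Lemmas \ref{lemma: induction}--\ref{lemma: exact and project}. The only points that require care are routine bookkeeping — checking that $\mathrm{Ext}$ and $\mathrm{Tor}$ interact correctly with the (possibly infinite) direct sums indexed by $\mathcal{I}$, which is harmless since in each case the direct sum sits in the variable where the relevant derived functor is additive in the right sense, and verifying that the adjunction and tensor identifications are natural enough to be promoted to derived functors, which is exactly what the exactness and projective-preservation in Lemma \ref{lemma: exact and project} provide. The main subtlety is simply keeping the three induction functors and their adjoints and source categories straight throughout.
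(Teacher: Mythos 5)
Your proposal is correct and follows essentially the same route as the paper: the paper also obtains both long exact sequences by applying the derived functors of $\mathrm{Hom}_{\mathcal{O}_{\mathcal{H}}G}(-,N)$ and $-\otimes_{\mathcal{O}_{\mathcal{H}}G}T$ to the short exact sequence of Proposition \ref{prop: short exact seq}, and identifies the terms using Lemma \ref{lemma: exact and project} together with the adjunction (\ref{eq: adjoint}). Your write-up merely makes explicit the standard bookkeeping (induced resolutions, the tensor identity $\mathbb{Z}[-,G/L]\otimes T\cong T(G/L)$, sums versus products) that the paper leaves to the reader.
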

\begin{proof} Using Lemma \ref{lemma: exact and project} and the adjointness between induction and restriction (\ref{eq: adjoint}), one checks that the desired long exact sequences are the long exact sequences induced by $\mathrm{Ext}^{\ast}_{\mathcal{O}_{\mathcal{H}}G}(-,N)$ and $\mathrm{Tor}_{\ast}^{\mathcal{O}_{\mathcal{H}}G}(-,T)$ from the short exact sequence of coefficients of Proposition \ref{prop: short exact seq} applied to $M$.
\end{proof}

\end{document}